\newcommand{\topdf}{\texorpdfstring}
\newcommand{\SchF}{\mathrm{Sch}/k}
\newcommand{\cdh}{\mathrm{cdh}}
\def\Spec{\operatorname{Spec}}
\def\cB{\mathcal B}
\def\TC{\mathcal{TC}}
\def\fp{\mathfrak p}
\def\fc{\mathfrak c}
\newcommand{\locass}{\mathfrak{LocAlg}}
\def\fC{\mathfrak{C}}
\def\cF{\mathcal F}
\def\cG{\mathcal G}
\def\cS{\mathcal S}
\def\cK{\mathcal K}
\def\ring{\mathrm{Rings}}
\def\rings{\mathrm{Rings}}
\def\tr{\mathrm{tr}}
\def\spt{\mathrm{Spt}}
\def\ab{\mathfrak Ab}
\def\red{\mathrm{red}}
\newcommand\topo{\mathrm{top}}
\def\inf{\mathrm{inf}}
\def\ninf{\mathrm{ninf}}
\def\nil{\mathrm{nil}}
\def\hotimes{\hat{\otimes}}
\def\sotimes{\utilde{\otimes}}
\newcommand{\zA}{\mathbb{A}}
\newcommand{\zC}{\mathbb{C}}
\newcommand{\zH}{\mathbb{H}}
\newcommand{\zQ}{\mathbb{Q}}
\newcommand{\zZ}{\mathbb{Z}}
\newcommand{\zN}{\mathbb{N}}
\newcommand{\hofi}{\mathrm{hofiber}}
\newcommand{\fA}{\mathfrak{A}}
\def\coker{\operatorname{coker}}
\def\colim{\operatornamewithlimits{colim}}
\def\Spec{\operatorname{Spec}}
\def\lra{\longrightarrow}
\def\iso{\stackrel{\cong}\lra}
\def\triqui{\vartriangleleft}
\def\weq{\overset\sim\lra}
\numberwithin{equation}{section}
\newtheorem{theorem}{Theorem}[section]
\newtheorem{corollary}[theorem]{Corollary}
\newtheorem{proposition}[theorem]{Proposition}
\newtheorem*{conjecture}{Conjecture}
\newtheorem*{question}{Question}
\theoremstyle{definition}
\newtheorem{remark}[theorem]{Remark}
\title[Excision, descent, and singularity in algebraic $K$-theory]{Excision, descent, and singularity in algebraic $K$-theory}
\author[Guillermo Corti\~nas]
{Guillermo Corti\~nas \thanks{Supported by CONICET and partially supported by grants UBACyT 20020100100386, PIP 11220110100800 and MTM2012-36917-C03-02.}}
\begin{document}

\begin{abstract}
Algebraic $K$-theory is a homology theory that behaves very well on sufficiently nice objects such as stable $C^*$-algebras or smooth algebraic varieties, and very badly in singular situations. This survey explains how to exploit this to detect singularity phenomena using 
$K$-theory and cyclic homology.
\end{abstract}

\begin{classification}
Primary 19D55; Secondary 19D50, 19E08.
\end{classification}

\begin{keywords}
Algebraic $K$-theory, cyclic homology, topological algebras, singular varieties.
\end{keywords}

\maketitle
\section{Introduction}\label{sec:intro}
A homology theory of rings associates groups $H_n(R)$ ($n\in\zZ$) depending covariantly on the ring $R$. We say that 
$H$ is (polynomially) \emph{homotopy invariant} if the map $H_n(R)\to H_n(R[t])$ is an isomorphism. A homology theory also associates groups $H_n(R:I)$ to every two-sided ideal $I\triqui R$, which fit into a long exact sequence
\[
H_{n+1}(R/I)\to H_n(R:I)\to H_n(R)\to H_n(R/I).
\]
We say that $H$ is \emph{nilinvariant} if $H_*(R:I)=0$ when $I$ is nilpotent. The homology theory is said to satisfy \emph{excision} if whenever $f:R\to S$ is a ring homomorphism sending an ideal $I\triqui R$ isomorphically onto an ideal of $S$, the map $H_*(R:I)\to H_*(S:f(I))$ is an isomorphism. Similarly, a cohomology theory of algebraic varieties over a field (schemes of finite type) associates groups $H_n(X:Y)$ with every closed immersion $Y\to X$, depending contravariantly on $(X:Y)$. In particular if $\tilde{X}$ is the blowup of $X$ along $Y$ and $\tilde{Y}$ is the exceptional fiber, we have a map $H_*(X:Y)\to H_*(\tilde{X}:\tilde{Y})$; this map is an isomorphism whenever $H$ satisfies \emph{$cdh$-descent}. For example, Quillen's \emph{algebraic $K$-theory} $K$ is a (co)homology theory of rings and schemes which has none of the aforementioned properties. For another example, Weibel's \emph{homotopy algebraic $K$-theory} $KH$ is also defined for rings and schemes, and satisfies all of them. There is a natural map $K_n(R)\to KH_n(R)$ which is an isomorphism when $R$ is \emph{$K_n$-regular}; this means that the map $K_n(R)\to K_n(R[t_1,\dots,t_p])$ is an isomorphism for all $p$. For instance Noetherian regular rings such as rings of polynomial functions on smooth varieties, and stable $C^*$-algebras such as the algebra $\cK$ of compact operators, are $K_n$-regular for all $n$.  In general the map $K\to KH$ is part of a long exact sequence
\[
KH_{n+1}(R)\to K_n^{\nil}(R)\to K_n(R)\to KH_n(R).
\]
In this article we survey a series of results that help describe the groups $K_*^{\nil}$ in terms of cyclic homology, and explain how this description has helped make significant progress in several long standing problems, ranging from the comparison of algebraic and topological $K$-theory of topological algebras to the relation between $K$-regularity and nonsingularity of algebraic varieties. 

The article is organized as follows. In Section \ref{sec:obse} we recall (from Goodwillie's paper \cite{goo2} and from \cite{kabi}) two key properties of the Chern character $ch_*:K_*(R)\to HN_*(R)$  to negative cyclic homology of $\zQ$-algebras. They can be summarized by saying that \emph{infinitesimal $K$-theory} is nilinvariant and satisfies excision (Theorem \ref{thm:kinf}); the infinitesimal $K$-groups fit into a long exact sequence
\[
HN_{n+1}(R)\to K^{\inf}_n(R)\to K_n(R)\overset{ch_n}{\longrightarrow} HN_n(R).
\]
In Section \ref{sec:kinfreg} we explain how the results of the previous one were used in \cite{CT} to compare the algebraic and the topological $K$-theory of a stable locally convex algebra $L$. The main result reviewed in this section is Theorem \ref{thm:freche}, which says that for such $L$, $KH_*(L)=K_*^{\topo}(L)$ and $K^{\nil}_n(L)=HC_{n-1}(L)$ is cyclic homology. In Section \ref{sec:mv} we recall the notions of descent and Mayer-Vietories properties. We review Thomason's theorems that $K$-theory satisfies Nisnevich descent and has the Mayer-Vietoris property for blow-ups along regularly embedded closed subschemes (Theorem \ref{thm:bt}), and their analogues for cyclic homology and infinitesimal $K$-theory of schemes of finite type over a field of characteristic zero (Theorem \ref{thm:reghc}). In Section \ref{sec:chh} we review Haesemeyer's theorem on $cdh$-descent and its generalization (Theorems \ref{thm:chh} and \ref{thm:chhg}) and derive from them a long exact sequence (Theorem \ref{thm:fhc})
\begin{equation}\label{intro:fhc}
KH_{n+1}(X)\to \cF^{HC}_{n-1}(X)\to K_n(X)\to KH_n(X),
\end{equation}
for every scheme $X$ of finite type over a field of characteristic zero. Up to extension, the groups $\cF_*^{HC}(X)$ are computed from the cyclic homology groups of $X$ and of an array of smooth schemes that appear in its desingularization process. In Section \ref{sec:dimconj} we show how these results were used to prove Weibel's dimension conjecture for schemes of finite type over a field of characteristic zero (Theorem \ref{thm:dim}). In Section \ref{sec:reg} we begin by explaining how the sequence \ref{intro:fhc} can be used to compute the obstruction to $K_n$-regularity (Proposition \ref{prop:obsreg}). Then we review several results on $K$-regularity, including that Vorst's regularity conjecture and Gubeladze's nilpotence conjecture hold for algebras and coefficient rings containing a field of characteristic zero (Theorems \ref{thm:chwvorst} and \ref{thm:gubel0}), and the answer to Bass' question on whether $K_n(R)=K_n(R[t])$ implies that $K_n(R)=K_n(R[t_1,t_2])$ 
(Theorem \ref{thm:bass}). Versions of some of these results in characteristic $p>0$ are reviewed in Section \ref{sec:p}. They are based on the good properties of the B\"okstedt-Hsiang-Madsen cyclotomic trace $\tr: K\to TC$, which takes values in topological cyclic homology \cite{bhm}. Theorems of McCarthy and Geisser-Hesselholt ( \ref{thm:mc} and \ref{thm:ghkabi}) say that $\tr$ computes the obstruction groups to nilinvariance and excision up to $p$-adic completion. The results of Geisser-Hesselholt extending Haesemeyer's theorem (Theorem \ref{thm:ghchh}) and proving Weibel's dimension conjecture (Theorem \ref{thm:ghneg}) and Vorst's regularity conjecture (Theorem \ref{thm:ghvorst}) over a perfect field of characteristic $p>0$ which admits strong resolution of singularities are reviewed, as well as the solution of Gubeladze's nilpotence conjecture over a field of charactristic $p$ (Theorem \ref{thm:gubelp}).

\bigskip
\noindent{\em Acknowledgements}. I wish to thank the following people for helpful comments on previous versions of this article: Christian Haesemeyer, Lars Hesselholt, Emanuel Rodr\'\i guez Cirone, Andreas Thom and Bruce Williams.

\bigskip

\section{The obstructions to excision and nilinvariance}\label{sec:obse}
Quillen's algebraic $K$-theory associates groups $K_n(R)$ ($n\in\zZ$) to each associative, not necessarily unital ring $R$. These groups are defined as the stable homotopy groups of a functorial spectrum $K(R)$. If $I\triqui R$ is an ideal, the relative $K$-theory spectrum 
is $K(R:I)=\hofi(K(R)\to K(R/I))$. The spectrum $K(R:I)$ is defined so as to fit into a \emph{homotopy fibration sequence}
\[
K(R:I)\to K(R)\to K(R/I).
\]
Thus taking homotopy groups we obtain a long exact sequence
\[
K_{n+1}(R/I)\to K_n(R:I)\to K_n(R)\to K_n(R/I).
\]
Observe that the failure of the map $K_*(R)\to K_*(R/I)$ to be an isomorphism is measured by the relative groups $K_*(R:I)$. If $I$ is nilpotent, the groups $K_n(R:I)$ vanish for $n\le 0$. Thus $K_n(R)\to K_n(R/I)$ is an isomorphism for $n\le 0$; because of this, we say that $K$-theory is \emph{nilinvariant} in nonpositive degrees. For $n\ge 1$ the groups $K_n(R:I)$ may be nonzero for nilpotent $I$; if $R$ is a $\zQ$-algebra they are $\zQ$-vector spaces (\cite{chumodp}*{Consequence 1.4}). For general $R$, the groups $K_n(R:I)\otimes\zQ$ are computed by means of the Chern character \cite{goo2}
\begin{equation}\label{map:ch}
ch:K_*(R)\to HN_*(R\otimes\zQ).
\end{equation}
Negative cyclic homology is connected with cyclic and periodic cyclic homology by means of Connes' $SBI$-sequence
\begin{equation}\label{seq:sbi}
HP_{n+1}(R)\overset{S}\to HC_{n-1}(R)\overset{B}\to HN_n(R)\overset{I}\to HP_n(R).
\end{equation}

\begin{theorem}\label{thm:goo}(Goodwillie, \cite{goo1}*{Theorem II.5.1},\cite{goo2}*{Main Theorem})
Let $R$ be a ring and $I\triqui R$ a nilpotent ideal. Then 
\[
HP_n(R\otimes\zQ:I\otimes\zQ)=0,
\]
and the Chern character induces an isomorphism
\[
K_n(R:I)\otimes\zQ\cong HN_n(R\otimes\zQ:I\otimes\zQ)\cong HC_{n-1}(R\otimes\zQ:I\otimes\zQ).
\]
In particular, $K_n(R:I)\otimes\zQ\cong K_n(R\otimes\zQ:I\otimes\zQ)$.
\end{theorem}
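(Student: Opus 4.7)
The plan is to reduce immediately to the $\zQ$-linear case and then handle the two claims separately: first the vanishing $HP_*(R\otimes\zQ:I\otimes\zQ)=0$, and then, via the $SBI$-sequence \eqref{seq:sbi}, derive the isomorphisms involving the Chern character. Since rationally $K_n(R:I)\otimes\zQ\cong K_n(R\otimes\zQ:I\otimes\zQ)$ (the final clause in the statement, which follows from the other assertions together with Suslin-Wodzicki-style excision rationally, or from Goodwillie's argument in the simplicial setting), we may work from the start with a nilpotent ideal $I\triqui R$ in a $\zQ$-algebra.

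For the vanishing of periodic cyclic homology, the key idea is to filter the Hochschild chain complex $C(R)$ by the decreasing filtration $F^pC(R:I)$ consisting of chains having at least $p$ tensor factors from $I$. Since $I$ is nilpotent, $I^N=0$ for some $N$, and on the relative complex this filtration is \emph{eventually stable at zero in each simplicial degree}: in degree $q$, only finitely many filtration steps contribute, and the filtration is exhaustive with $F^N C(R:I)=0$. The crucial point is that Connes' $S$-operator raises the $I$-adic filtration, so on the relative cyclic bicomplex the $S$ maps are locally nilpotent. Because $HP$ is computed as the homotopy limit along $S$ of the $HC$-tower, and because on the relative theory each stage is annihilated after finitely many applications of $S$, one concludes $HP_n(R\otimes\zQ:I\otimes\zQ)=0$. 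Inserted in the $SBI$-sequence \eqref{seq:sbi} this yields $B: HC_{n-1}(R\otimes\zQ:I\otimes\zQ)\xrightarrow{\cong} HN_n(R\otimes\zQ:I\otimes\zQ)$.

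For the Chern character isomorphism, the strategy is to replace $R$ by a free simplicial resolution and thereby reduce to the universal example of a nilpotent extension, where both functors can be computed in terms of matrix Lie algebra homology. Concretely, Goodwillie uses the Volodin (or plus-construction) model for relative $K$-theory of a nilpotent ideal, which identifies $K_*(R:I)\otimes\zQ$ with the primitive part of the rational homology of $\GL(R:I)$, and this in turn, via Loday-Quillen-Tsygan, is expressed through the cyclic (equivalently, by the previous paragraph, negative cyclic) homology of the nilpotent ideal. The Chern character $ch$ is then seen to realize precisely this identification. The compatibility is checked on the universal free simplicial $\zQ$-algebra presentation of $R\to R/I$ and transferred back by spectral sequence arguments controlled by the same $I$-adic filtration.

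The main obstacle is the Chern character isomorphism, not the vanishing of $HP$. The delicate point is to show that the trace/Chern map actually implements the Loday-Quillen-Tsygan isomorphism at the level of \emph{relative} groups and that this isomorphism propagates through the simplicial resolution. One must carefully track how both sides behave under the $I$-adic filtration and verify that the spectral sequence comparing them degenerates in the desired range; in other words, to prove that the natural diagram relating $K$-theory, Lie algebra homology of $\mathfrak{gl}(I)$, and negative cyclic homology commutes up to the relevant quasi-isomorphism. Once this is in place, the final assertion $K_n(R:I)\otimes\zQ\cong K_n(R\otimes\zQ:I\otimes\zQ)$ follows by comparing the isomorphisms just established for $R$ and for $R\otimes\zQ$ with the fact that $HN_*$ of the relative ideal is insensitive to the intervening $\zQ$-tensoring.
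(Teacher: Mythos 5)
The first thing to say is that the paper offers no proof of this statement: it is Goodwillie's theorem, quoted from \cite{goo1} and \cite{goo2}, with only Remark \ref{rem:goonital} added to record that the nonunital case reduces to the unital one. So your sketch has to be measured against Goodwillie's original arguments, and at that level it does identify the right circle of ideas (relative Volodin/plus-construction models, Malcev theory for the nilpotent congruence subgroup $1+M_\infty(I)$, Loday--Quillen--Tsygan, and a simplicial resolution to reduce to free objects). There is, however, a circularity at the very start: you reduce to the case of a $\zQ$-algebra by invoking $K_n(R:I)\otimes\zQ\cong K_n(R\otimes\zQ:I\otimes\zQ)$, but that is the \emph{final} clause of the theorem, and it is deduced from the main isomorphism precisely because $HN_n(R\otimes\zQ:I\otimes\zQ)$ visibly depends only on $R\otimes\zQ$ --- which is exactly how your own closing sentence derives it. Goodwillie states and proves the Main Theorem for arbitrary (simplicial) rings, with the rationalization appearing only on the cyclic homology side; the reduction you want is an output, not an available input. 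What you may assume at the outset is only that the relative groups of a nilpotent ideal in a $\zQ$-algebra are $\zQ$-vector spaces (\cite{chumodp}*{Consequence 1.4}), which is weaker.

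The second gap is in the vanishing of $HP$: the assertion that Connes' $S$-operator ``raises the $I$-adic filtration'' is stated, not proved, and it is not the mechanism of \cite{goo1}*{Theorem II.5.1}. The correct intermediate claim --- that $S$ is pro-nilpotent on the relative $HC$-tower, so that both ${\lim}^1$ and the limit along $S$ vanish --- is right, but Goodwillie obtains it from his derivation lemma: the Lie derivative $L_D$ of any derivation satisfies $L_D\circ S=0$ on $HC_*$, hence $L_D=0$ on $HP_*$; one then reduces by induction on the nilpotence degree to a square-zero extension $R=A\oplus I$, which is a graded algebra with $I$ in weight one, and observes that the Euler derivation of this grading acts invertibly (over $\zQ$) on the positive-weight part of the cyclic complex, which is exactly the relative part; this forces $S=0$ there, and induction gives nilpotence of $S$ on relative $HC$ in general. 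If you prefer to keep a filtration-style argument, you owe a proof that $S$ interacts with the $I$-adic filtration as claimed. Finally, your last paragraph correctly isolates the genuinely delicate point of \cite{goo2} --- that the abstract isomorphism produced by Malcev theory and Loday--Quillen--Tsygan is realized by the Chern character and propagates through the resolution spectral sequences --- but leaves it as a description of what must be done rather than an argument; for a theorem of this size that is acceptable in a sketch, provided the two issues above are repaired.
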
 
\begin{remark}\label{rem:goonital} Goodwillie states his results for unital $R$; the nonunital case follows from the unital case (see \cite{derived}*{Lemma 6.1}, e.g.). 
\end{remark}
If $I\triqui R$ is an ideal and $f:R\to S$ is a ring homomorphism such that $f(I)\triqui S$ is an ideal and
$f:I\to f(I)$ is bijective, we put 
\[
K(R,S:I)=\hofi(K(R:I)\to K(S:f(I))).
\] 
The groups $K_n(R,S:I)$ are zero for $n\le -1$. Thus for $n\le 0$ the groups $K_n(R:I)$ depend only on $I$ and not on the ideal embedding $I\triqui R$. Because of this, we say that $K$-theory satisfies \emph{excision} (or that it is \emph{excisive}) in nonpositive degrees. If $R$ is a $\zQ$-algebra, the groups
$K_n(R,S:I)$ are $\zQ$-vector spaces (\cite{chumodp}*{Consequence 1.5}). For general $R$, the groups $K_n(R,S:I)\otimes\zQ$ are again computed by means of the Chern character; this is the content of Theorem \ref{thm:kabi} below. First let us recall the Cuntz-Quillen excision theorem \cite{cq}*{Theorem 5.3}, which says that $HP$ \emph{satisfies excision} in the category of  $\zQ$-algebras; this means that if in the situation just described $f$ is a $\zQ$-algebra homomorphism, then
\begin{equation}\label{cq}
HP_*(R,S:I)=0.
\end{equation}
It follows that the $B$-map in the $SBI$ sequence \eqref{seq:sbi} induces an isomorphism
\[
HC_{*-1}(R,S:I)\cong HN_*(R,S:I).
\]

\begin{theorem}\label{thm:kabi}(\cite{kabi}*{Theorem 0.1})
Let $I\triqui R$ be an ideal and let $f:R\to S$ be a ring homomorphism. Assume that $f(I)\triqui S$ is an ideal and that $f:I\to f(I)$ is bijective. Then the Chern character induces an isomorphism
\begin{align*}
K_*(R,S:I)\otimes\zQ&\cong HN_*(R\otimes\zQ,S\otimes\zQ:I\otimes\zQ)\\
&\cong HC_{*-1}(R\otimes\zQ,S\otimes\zQ:I\otimes\zQ).
\end{align*}
In particular, $K_*(R,S:I)\otimes\zQ\cong K_*(R\otimes\zQ,S\otimes\zQ:I\otimes\zQ)$.
\end{theorem}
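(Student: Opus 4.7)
The plan is to combine Goodwillie's theorem \ref{thm:goo} (which handles the case of a nilpotent ideal) with the Cuntz--Quillen excision theorem \eqref{cq} to derive the bivariant statement. The second isomorphism comes essentially for free: applying \eqref{cq} to the relative $SBI$ sequence \eqref{seq:sbi} for the triple $(R\otimes\zQ, S\otimes\zQ : I\otimes\zQ)$ makes the $HP$-term vanish, whereupon the boundary map $B$ induces the desired isomorphism $HC_{*-1}\cong HN_*$. So the bulk of the work concerns the first isomorphism.

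For the first isomorphism I would reduce to Goodwillie's theorem by a universal construction that replaces the arbitrary ideal embedding with a nilpotent one. First one should reduce to the unital case by passing to unitalizations, exactly as in Remark \ref{rem:goonital}. Next, use the standard tensor-algebra / path extension: for a $\zQ$-algebra $A$ there is a canonical extension $0 \to JA \to TA \to A \to 0$ where $TA$ is the tensor algebra of $A$ as a $\zQ$-module (quasi-free in the Cuntz--Quillen sense) and $JA$ is the augmentation ideal. Iterating the construction produces a simplicial resolution of $A$ by quasi-free $\zQ$-algebras, and applying it functorially to the ideal triple $(R,S:I)$ yields a comparison diagram whose relative pieces are concentrated in \emph{nilpotent} extensions of quasi-free algebras. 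On those pieces Theorem \ref{thm:goo} applies termwise to identify $K_*(-,-:-)\otimes\zQ$ with $HN_*(-\otimes\zQ,-\otimes\zQ:-\otimes\zQ)$ via the Chern character.

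The main obstacle is to verify that this derived/simplicial construction is compatible with both functors $K(-)\otimes\zQ$ and $HN(-\otimes\zQ)$, and that the Chern character intertwines them at each simplicial level. Concretely, one needs descent of both theories along the resulting simplicial resolution: for $K$-theory rationally this relies on Suslin--Wodzicki-style H-unitality arguments for $\zQ$-algebras, and for $HN$ it rests again on Cuntz--Quillen excision for $HP$ together with the $SBI$ sequence. Once these compatibility statements are in place, the spectral sequence associated to the simplicial resolution converges on both sides, and the bivariant isomorphism of Theorem \ref{thm:kabi} follows from the level-wise nilpotent case already settled by Theorem \ref{thm:goo}. The last assertion, $K_*(R,S:I)\otimes\zQ\cong K_*(R\otimes\zQ,S\otimes\zQ:I\otimes\zQ)$, is then obtained by applying the just-established isomorphism both to $(R,S:I)$ and to $(R\otimes\zQ,S\otimes\zQ:I\otimes\zQ)$ and noting that the right-hand sides coincide.
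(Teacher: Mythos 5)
Your treatment of the second isomorphism is exactly the paper's: the Cuntz--Quillen theorem \eqref{cq} kills the birelative $HP$, and the $SBI$ sequence \eqref{seq:sbi} then lets $B$ identify $HC_{*-1}$ with $HN_*$ on birelative groups. The closing deduction of the ``in particular'' clause is also fine. The problem is the first isomorphism, which is where all the content of \cite{kabi}*{Theorem 0.1} lies, and there your proposed reduction has a genuine gap. The kernel $JA$ of the tensor-algebra extension $0\to JA\to TA\to A\to 0$ is \emph{not} nilpotent; it is only pro-nilpotent, i.e.\ the quotients $TA/(JA)^n\to A$ have nilpotent kernels. So Theorem \ref{thm:goo} does not apply levelwise to the cotriple resolution $T^{\bullet+1}A\to A$ as you claim; one is forced into the pro-system $\{TA/(JA)^n\}_n$. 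That passage is exactly how Cuntz--Quillen prove \eqref{cq}, because $HP$ (and hence birelative $HN$ via $SBI$) is insensitive to pro-nilpotent extensions. But $K$-theory is not: $K(A)\otimes\zQ$ is not computed by the tower $\{K(TA/(JA)^n)\otimes\zQ\}$ in any established sense (there is no continuity or $\lim^1$ control here), and this failure is precisely why the theorem resisted proof for twenty years rather than following formally from Goodwillie plus Cuntz--Quillen.

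The second weak point is the appeal to ``Suslin--Wodzicki-style H-unitality arguments'' to get descent of rational $K$-theory along the resolution. Suslin--Wodzicki \cite{sw} characterize the ideals $I$ for which the birelative rational $K$-groups \emph{vanish} (the H-unital ones); a general $I$ in Theorem \ref{thm:kabi} is not H-unital, and the theorem is precisely about computing the nonzero obstruction in that case. Invoking their machinery as a black box for the compatibility you need is therefore either circular or unsubstantiated. What the actual proof in \cite{kabi} does (and what this survey summarizes in the sentence following the theorem) is to open up the Suslin--Wodzicki machinery --- the homology of the affine general linear groups and Volodin-type constructions attached to $I$ --- compare it with Lie algebra homology of matrices over $I$ so as to reach cyclic homology via Loday--Quillen--Tsygan, and use the Cuntz--Quillen pro-nilpotent techniques on the cyclic homology side. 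It is a genuinely different, and much harder, argument than a formal reduction to the nilpotent case.
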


In the case of $\zQ$-algebras Theorem \ref{thm:kabi} confirmed a conjecture formulated in the mid ~1980's by  Geller-Reid-Weibel (\cite{grw}, \cite{gw}). Its proof combines the techniques developed by Cuntz-Quillen to prove their excision theorem with those used by Suslin-Wodzicki to characterize those nonunital rings $I$ such that $K_*(-,-:I)\otimes\zQ=0$ \cite{sw}*{Theorem A}.

In the applications considered in the next sections, all the rings involved are $\zQ$-algebras. In this case we will find it useful to formulate the results
above in terms of infinitesimal $K$-theory. The Chern character \eqref{map:ch} comes from a map of spectra $K\to HN$; \emph{infinitesimal $K$-theory} is the homotopy fiber of this map:
\[
K^{\inf}(R)=\hofi(K(R)\to HN(R)).
\]
In terms of $K^{\inf}$, Theorems \ref{thm:goo} and \ref{thm:kabi} can be expressed as follows. 

\begin{theorem}\label{thm:kinf}
Infinitesimal $K$-theory of $\zQ$-algebras is nilinvariant and satisfies excision. 
\end{theorem}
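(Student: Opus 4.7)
The plan is to deduce the theorem directly from the two preceding theorems by tracking the defining fiber sequence
\[
K^{\inf}(R) \to K(R) \to HN(R)
\]
through the relative constructions. Taking homotopy fibers is exact, so for any ideal $I \triqui R$ we obtain a homotopy fibration
\[
K^{\inf}(R:I) \to K(R:I) \to HN(R:I),
\]
and for any homomorphism $f : R \to S$ mapping $I$ isomorphically onto an ideal of $S$, iterating the construction gives
\[
K^{\inf}(R,S:I) \to K(R,S:I) \to HN(R,S:I).
\]
From these we get long exact sequences relating $K^{\inf}_*$, $K_*$ and $HN_*$ in the relative and bi-relative cases.

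For nilinvariance, let $R$ be a $\zQ$-algebra and $I \triqui R$ nilpotent. Since $R$ is a $\zQ$-algebra, the canonical map $R \otimes \zQ \to R$ is an isomorphism, and by \cite{chumodp}*{Consequence 1.4} the groups $K_n(R:I)$ are already $\zQ$-vector spaces, so $K_n(R:I) \otimes \zQ = K_n(R:I)$. Goodwillie's Theorem \ref{thm:goo} then says that the Chern character induces an isomorphism $K_n(R:I) \cong HN_n(R:I)$ for every $n$. Plugging this into the long exact sequence
\[
HN_{n+1}(R:I) \to K^{\inf}_n(R:I) \to K_n(R:I) \to HN_n(R:I)
\]
forces $K^{\inf}_n(R:I) = 0$ for all $n \in \zZ$.

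For excision, apply the same reasoning in the bi-relative setting. By \cite{chumodp}*{Consequence 1.5} the groups $K_n(R,S:I)$ are $\zQ$-vector spaces when $R$ and $S$ are $\zQ$-algebras, so Theorem \ref{thm:kabi} yields an isomorphism $K_n(R,S:I) \cong HN_n(R,S:I)$ induced by the Chern character. The five lemma applied to the bi-relative long exact sequence
\[
HN_{n+1}(R,S:I) \to K^{\inf}_n(R,S:I) \to K_n(R,S:I) \to HN_n(R,S:I)
\]
then gives $K^{\inf}_n(R,S:I) = 0$, which is exactly excision for $K^{\inf}$.

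There is no real obstacle at this stage of the argument: all the heavy lifting has been absorbed into Theorems \ref{thm:goo} and \ref{thm:kabi}, whose proofs are substantial (Goodwillie's argument in \cite{goo1}, \cite{goo2} for nilinvariance and the Cuntz--Quillen/Suslin--Wodzicki style argument in \cite{kabi} for excision). The only point to be careful about is the passage between $R$ and $R \otimes \zQ$ in the statements of those theorems, which is handled by the rationality of $K_*(R:I)$ and $K_*(R,S:I)$ for $\zQ$-algebras; once this is noted, the fiber-sequence manipulation is formal.
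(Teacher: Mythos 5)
Your argument is correct and is exactly the route the paper intends: Theorem \ref{thm:kinf} is stated there as a direct reformulation of Theorems \ref{thm:goo} and \ref{thm:kabi}, using the rationality of $K_*(R:I)$ and $K_*(R,S:I)$ for $\zQ$-algebras to drop the $\otimes\zQ$ and then reading off the vanishing of $K^{\inf}_*$ from the (bi)relative fiber sequences. The only cosmetic quibble is that the final step is plain exactness of the long exact sequence (a map of spectra is a weak equivalence iff its homotopy fiber has vanishing homotopy groups), not the five lemma.
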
 

\section{\topdf{$K^{\inf}$}{Kinf}-regularity and algebraic \textup{K}-theory of topological algebras.}\label{sec:kinfreg}

Let $A$ be a ring and $F:\rings\to\ab$ a functor. We say that $A$ is \emph{$F$-regular} if the map induced
by the canonical inclusion 
\[
F(A)\to F(A[t_1,\dots,t_n])
\]
is an isomorphism for all $n\ge 1$. The functor $F$ is called \emph{invariant under polynomial homotopy} (homotopy invariant, for short) on a subcategory $\fC\subset\rings$ if every ring $A\in\fC$ is $F$-regular. A \emph{homology theory} of rings is a covariant functor $E$ from rings to spectra; we write $E_n(R)=\pi_nE(R)$ for the stable homotopy group. If $E$ is a homology theory of rings, we call a ring \emph{$E$-regular} if it is $E_n$-regular for all $n$. We call $E$ \emph{homotopy invariant} if each $E_n$ is homotopy invariant. There is a well-known procedure $E\to EH$, the \emph{homotopization procedure} that transforms $E$ to a homotopy invariant homology theory $EH$. This construction comes with a natural map $E(A)\to EH(A)$ and setting
$E^{\nil}(A)=\hofi(E(A)\to EH(A))$ one obtains a long exact sequence
\begin{equation}\label{seq:nil}
EH_{n+1}(A)\to E_n^{\nil}(A)\to E_n(A)\to EH_n(A).
\end{equation}
If $A$ is $E_m$-regular for all $m\le n$, then $E_m(A)\to EH_m(A)$ is an isomorphism for $m\le n$ and is onto for $m=n+1$. Applying this procedure to Quillen's $K$-theory yields Weibel's homotopy $K$-theory $KH$ \cite{kh}. The following theorem summarizes two key properties of $KH$.

\begin{theorem}\label{thm:kh}(Weibel, \cite{kh}*{Theorems 2.1 and 2.3})
Homotopy algebraic $K$-theory $KH$ is nilinvariant and satisfies excision.
\end{theorem}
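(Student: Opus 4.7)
The plan is to work from the simplicial model
\[
KH(R) \;=\; \bigl|\,[n]\mapsto K(R\otimes_{\zZ}\Delta^n)\,\bigr|,\qquad \Delta^n=\zZ[t_0,\dots,t_n]/\bigl(\textstyle\sum_i t_i-1\bigr),
\]
which is polynomially homotopy invariant by construction. Both assertions reduce to showing that the relative (resp.\ doubly relative) $K$-theory simplicial spectrum has contractible realization. One forms
\[
KH(R:I) \;=\; \bigl|\,[n]\mapsto K(R\otimes\Delta^n : I\otimes\Delta^n)\,\bigr|,
\]
and analogously $KH(R,S:I)$, and the task is to prove vanishing in all degrees.

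For nilinvariance, assume $I\triqui R$ is nilpotent of index $k$. Since $I\otimes\Delta^n$ remains nilpotent of index $k$ in $R\otimes\Delta^n$, the nonpositive-degree nilinvariance of ordinary $K$-theory (recalled just before Theorem \ref{thm:goo}) gives $K_m(R\otimes\Delta^n:I\otimes\Delta^n)=0$ for every $n$ and every $m\le 0$. The spectral sequence
\[
E^2_{p,q} \;=\; \pi_p^{\mathrm{simp}}K_q(R\otimes\Delta^\bullet:I\otimes\Delta^\bullet)\;\Longrightarrow\;KH_{p+q}(R:I)
\]
associated with the realization of a simplicial spectrum then forces $KH_m(R:I)=0$ for $m\le 0$. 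To propagate this to $m\ge 1$, I would invoke a Karoubi--Villamayor-style contracting homotopy on the relative general linear group: for any nilpotent $a\in I$, the element $1+ta\in GL(R[t]:I[t])$ has explicit inverse $\sum_{j=0}^{k-1}(-ta)^j$ and furnishes a polynomial path from $1$ to $1+a$. Extending this inductively to all simplicial dimensions shows that $GL(R\otimes\Delta^\bullet:I\otimes\Delta^\bullet)$ is simplicially contractible near the identity, yielding vanishing of the corresponding $KV$-groups in positive degrees; combined with the standard identification of $KV$ with $KH$ in positive degrees, this gives $KH(R:I)\simeq *$.

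For excision the argument runs in parallel with $(R,S,I)$ in place of $(R,I)$. Starting from the nonpositive-degree excision of ordinary $K$-theory (recalled just before Theorem \ref{thm:kabi}), the same simplicial-spectrum spectral sequence yields $KH_m(R,S:I)=0$ for $m\le 0$. In positive degrees one applies a Suslin--Wodzicki-type analysis of the doubly relative simplicial general linear group: because $f:I\to f(I)$ is a ring isomorphism, the doubly relative group admits an analogous contracting structure, forcing $KV_n(R,S:I)=0$ for $n\ge 1$, and hence $KH(R,S:I)\simeq *$.

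The hardest part will be the positive-degree step: exhibiting the simplicial contracting homotopies in the form needed to interact cleanly with the nonconnective delooping built into $K$-theory, and verifying that the $KV\simeq KH$ comparison really extends to the relative and doubly relative pairs. This is the technical heart of Weibel's proof in \cite{kh} and is where the homotopy invariance built into $KH$ by construction plays its essential role.
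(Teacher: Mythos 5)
The survey does not actually prove this statement; it quotes it from Weibel's paper \cite{kh} with a precise citation, so the comparison below is with Weibel's argument rather than with anything in the text. Your reduction of the nonpositive-degree part is fine and does reflect a real ingredient of the proof: the spectral sequence of the simplicial spectrum together with the classical facts that $K_q(R:I)=0$ for nilpotent $I$ and $K_q(R,S:I)=0$ in degrees $q\le 0$ does give $KH_m(R:I)=0$ and $KH_m(R,S:I)=0$ for $m\le 0$.

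The genuine gap is the positive-degree excision step. Birelative $K$-theory is not the classifying space of a ``doubly relative general linear group'': since $f$ is injective on $I$, the group-theoretic kernel of $\GL(R:I)\to\GL(S:f(I))$ is trivial, while the homotopy fiber $K(R,S:I)$ is very far from contractible. So there is no simplicial group available to contract, and the phrase ``because $f:I\to f(I)$ is a ring isomorphism, the doubly relative group admits an analogous contracting structure'' does not describe an actual construction. Worse, if such a contraction existed it would apply verbatim to $K$-theory itself and prove that $K_1$ satisfies excision, which is false (this failure is the entire subject of Theorem \ref{thm:kabi} and of Suslin--Wodzicki's work). Invoking Suslin--Wodzicki is also the wrong tool: their analysis yields excision rationally and under H-unitality-type hypotheses, whereas $KH$-excision holds integrally for arbitrary ideals, and their paper postdates Weibel's theorem. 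The mechanism Weibel actually uses to propagate excision from degrees $\le 0$ to all degrees is not a contraction of linear groups but the Bass delooping built into the nonconnective spectrum combined with homotopy invariance (the fundamental theorem for $KH$), which lets the classical excision and nilinvariance of $K_0$ and the negative $K$-groups control all of $KH$; note also that in \cite{kh} nilinvariance (Theorem 2.3) is obtained \emph{after} excision (Theorem 2.1), whereas your outline treats them independently. Your positive-degree nilinvariance step has a smaller but real gap of the same flavor: the path $1+ta$ only shows that $\pi_0$ of the relative simplicial group vanishes, which does not make a simplicial group contractible, and the identification of $KV$ with $KH$ in positive degrees requires a $K_0$-regularity hypothesis that must be checked for the relative pair.
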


If $A$ is a $\zQ$-algebra, then $A$ is $HP$-regular (e.g. by \cite{goo1}*{Corollary II.4.4} or by \cite{kas}*{Equation 3.13}). Geller and Weibel showed in \cite{far}*{Theorem 4.1} that if $A$ is a $\zQ$-algebra then $HNH_*(A)\cong HP_*(A)$, and \eqref{seq:nil} identifies with Connes' $SBI$-sequence \eqref{seq:sbi}.
If $A$ is a $\zQ$-algebra, set 
\[
K^{\ninf}(A)=K^{\inf,\nil}(A).
\]
The homotopization procedure preserves fibration sequences. Hence we have a homotopy commutative diagram whose rows and columns are homotopy fibration sequences:
\begin{equation}\label{diag:knil}
\xymatrix{
K^{\ninf}(A)\ar[r]\ar[d]&K^{\inf}(A)\ar[d]\ar[r]& K^{\inf}H(A)\ar[d]\\
K^{\nil}(A)\ar[r]\ar[d]&K(A)\ar[d]\ar[r]&KH(A)\ar[d]\\
HC(A)[-1]\ar[r]&HN(A)\ar[r]&HP(A). 
}
\end{equation}
By the discussion above, 
\begin{equation}\label{kinfreghc}
A\ \ K^{\inf}-\mbox{regular }\Rightarrow K_*^{\nil}(A)\iso HC_{*-1}(A).
\end{equation}

A \emph{locally convex algebra} is a complete locally convex $\zC$-vector space $L$ equipped with a jointly continuous, associative multiplication $L\otimes_\zC L\to L$. The following theorem concerns the $K$-theory of a large class of stable locally convex algebras. It computes the obstruction for the map $K\to K^{\topo}$ to be an isomorphism. Here $K_*^{\topo}(L)=kk_*^{lc}(\zC,L)$ is the covariant part of Cuntz' bivariant $K$-theory for locally convex algebras \cite{cubiva}. As expected of a bivariant $K$-theory of complex topological algebras, it is periodic of period two:
\begin{equation}\label{bott}
kk_*^{lc}(L,M)=kk^{lc}_{*+2}(L,M). 
\end{equation}
Thus there are only two covariant topological $K$-groups, $K_0^{\topo}$ and $K_1^{\topo}$. 

Let $\cB=\cB(\ell^2(\zN))$ be the $C^*$-algebra of bounded operators in an infinite dimensional, separable Hilbert space. An ideal $I\triqui\cB$ is a \emph{Fr\'echet operator ideal} if it carries a complete metrizable
locally convex topology such that the inclusion $I\to\cB$ is continuous. 

\begin{theorem}\label{thm:freche} (\cite{CT}*{Theorems 6.2.1 and 6.3.1})
Let $L$ be a locally convex $\zC$-algebra and let $I$ be a Fr\'echet operator ideal. Write $\hotimes$ for the projective tensor product. Then:
\item[i)] $L\hotimes I$ is $K^{\inf}$-regular.
\item[ii)] $KH_*(L\hotimes I)=K_*^{\topo}(L\hotimes I)$.
\item[iii)] For each $n \in \zZ$,
there is a natural $6$-term exact sequence of abelian groups as follows:
\[
\xymatrix{ K^{\topo}_{1}(L\hotimes I)\ar[r]&HC_{2n-1}(L\hotimes I)\ar[r]&K_{2n}(L\hotimes I)\ar[d] \\
K_{2n-1}(L \hotimes I) \ar[u]& HC_{2n-2}(L \hotimes I) \ar[l] & K_0^{\topo}(L\hotimes I). \ar[l]}
\]
\end{theorem}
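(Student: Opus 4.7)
The plan is to establish (i), (ii), (iii) in turn, using Theorem \ref{thm:kinf} and the homotopy fibration diagram \eqref{diag:knil} as the main structural input.

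For (i), the goal is to show that the natural map $K^{\inf}(L\hotimes I)\to K^{\inf}((L\hotimes I)[t_1,\dots,t_n])$ is an equivalence. The strategy I would follow is to exploit the stability of a Fr\'echet operator ideal: $I\hotimes I\cong I$ via an isomorphism coming from a fixed unitary identification $\ell^2(\zN)\otimes\ell^2(\zN)\cong \ell^2(\zN)$, so after tensoring with $I$ there is an Eilenberg swindle available. First I would compare the polynomial algebra $(L\hotimes I)[t]$ with the algebra of $L\hotimes I$-valued smooth (or continuous) functions on $[0,1]$ via the canonical inclusion, and use the continuous-homotopy invariance of the topological side together with the excision and nilinvariance of Theorem \ref{thm:kinf} to reduce $K^{\inf}$-regularity to a statement that can be checked after applying the Chern character. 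The fact that $I$ is a Fr\'echet operator ideal ensures the analytic input needed to interpolate between polynomial and smooth homotopies. The main technical obstacle lies here: controlling the gap between algebraic (polynomial) and topological (smooth/continuous) homotopies on $L\hotimes I$ so that the nilinvariant/excisive functor $K^{\inf}$ is forced to be polynomially homotopy invariant.

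For (ii), I would combine (i) with \eqref{kinfreghc}: since $L\hotimes I$ is $K^{\inf}$-regular, the upper row of \eqref{diag:knil} collapses to give $K^{\ninf}(L\hotimes I)\simeq 0$, hence $K^{\nil}_*(L\hotimes I)\cong HC_{*-1}(L\hotimes I)$. The fibration sequence $K^{\nil}\to K\to KH$ therefore identifies $KH(L\hotimes I)$ as the cofiber of a map with source $HC(L\hotimes I)[-1]$. On the topological side, $K^{\topo}(L\hotimes I)=kk^{lc}_*(\zC,L\hotimes I)$ is continuously homotopy invariant, matricially stable, and split-exact by Cuntz's construction. Using the universality of $KH$ as the homotopization of $K$, the natural transformation $K\to K^{\topo}$ factors through a natural map $KH\to K^{\topo}$, and I would verify that on $L\hotimes I$ both theories share the same universal property (homotopy invariance, stability, excision for the relevant split exact sequences), so the map is an isomorphism.

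For (iii), everything is now automatic: combining (i), (ii) and \eqref{kinfreghc} with the fibration $K^{\nil}(A)\to K(A)\to KH(A)$ gives a long exact sequence
\[
\cdots\to HC_{n-1}(L\hotimes I)\to K_n(L\hotimes I)\to K^{\topo}_n(L\hotimes I)\to HC_{n-2}(L\hotimes I)\to\cdots,
\]
and the Bott periodicity isomorphism \eqref{bott} collapses the infinite tail into the claimed $6$-term exact sequence. The main obstacle of the whole proof is step (i): once $K^{\inf}$-regularity of $L\hotimes I$ is established, parts (ii) and (iii) are essentially a diagram chase in \eqref{diag:knil}.
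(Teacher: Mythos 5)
Your part (iii) matches the paper's argument, and the overall skeleton (establish $K^{\inf}$-regularity, identify $KH$ with $K^{\topo}$, then chase diagram \eqref{diag:knil} using \eqref{kinfreghc} and Bott periodicity \eqref{bott}) is the right one. But there are two genuine gaps. In (i), the decisive input is the Cuntz--Thom theorem that for a homology theory $H$ satisfying \emph{excision} and a harmonic Banach ideal $I$, the functor $H(-\hotimes I)$ is automatically invariant under $C^\infty$ homotopies, together with its extension (using nilinvariance) to arbitrary Fr\'echet operator ideals; only after that does a Rosenberg/Higson-type argument convert diffotopy invariance into polynomial homotopy invariance. You have inverted the difficulty: the passage from smooth to polynomial homotopies, which you flag as the main obstacle, is the comparatively routine step, whereas the diffotopy invariance of $K^{\inf}(-\hotimes I)$ --- which you try to import from ``continuous-homotopy invariance of the topological side'' --- is the hard theorem, and $K^{\inf}$ has no topological side from which to import anything. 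Moreover your starting point $I\hotimes I\cong I$ is false for general Fr\'echet operator ideals (Schatten ideals multiply as $\cL^p\hotimes\cL^q\to\cL^r$ with $1/r=1/p+1/q$); the relevant hypothesis is the existence of an operator whose singular values form the harmonic sequence, not projective-tensor idempotence, so the swindle you invoke is not available in this form. Finally, ``a statement that can be checked after applying the Chern character'' is circular, since $K^{\inf}$ is by definition the fiber of the Chern character.

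In (ii), knowing that $KH(-\hotimes I)$ and $K^{\topo}(-\hotimes I)$ are both excisive and diffotopy invariant does not make them agree: two theories satisfying the same axioms need not be isomorphic unless they are identified somewhere. The paper anchors the comparison in degree $0$ via the nontrivial Cuntz--Thom theorem that $K_0(L\hotimes I)=K_0^{\topo}(L\hotimes I)$ for harmonic $I$ (extended to Fr\'echet ideals), together with the fact that $K^{\inf}$-regularity forces $K_n$-regularity, hence $KH_n=K_n$, for $n\le 0$; only then does the standard argument of Cuntz propagate the isomorphism to all degrees. Your appeal to a shared universal property omits this base case, and without it the argument establishes nothing.
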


\noindent{\it Sketch of the proof of Theorem \ref{thm:freche}.} Call a Banach ideal $I\triqui\cB$ \emph{harmonic} if it contains an operator whose sequence of singular values is the harmonic sequence $\{1/n\}$. Results of Cuntz-Thom \cite{cut}*{Theorems 4.2.1 and 5.1.2} imply that if $H$ is a homology theory which satisfies excision and $I\triqui\cB$ is harmonic, then $H(-\hotimes I)$ is invariant under $C^\infty$ homotopies, also called \emph{diffotopies}. It is shown in \cite{CT}*{Theorem 6.16} that if in addition $H$ is nilinvariant, then $H(-\hotimes I)$ is diffotopy invariant for any Fr\'echet ideal $I$. In particular this applies to $K^{\inf}$ and $KH$ by Theorems \ref{thm:kinf} and \ref{thm:kh}. Part i) of the theorem follows from the fact that $K^{\inf}(-\hotimes I)$ is diffotopy invariant, using the argument of \cite{comparos}*{Theorem 3.4}. By \cite{CT}*{Lemma 3.2.1}, $K^{\inf}$-regular $\zQ$-algebras are $K_n$-regular for $n\le 0$. Hence we have $KH_n(L\hotimes I)=K_n(L\hotimes I)$ for $n\le 0$. Cuntz and Thom proved in \cite{cut}*{Theorem 6.2.1} that $K_0(L\hotimes I)=K_0^{\topo}(L\hotimes I)$ when $I$ is harmonic; by the argument of \cite{CT}*{Theorem 6.16}, this extends to all Fr\'echet ideals. Summing up $KH(-\hotimes I)$ is an excisive and diffotopy invariant homology theory in $\locass$ which agrees with $K^{\topo}(-\hotimes I)$ in dimension $0$. 
A standard argument now shows that they must agree in all dimensions (\cite{cudocu}*{Abschnitt 6}). This proves ii). Part iii) follows from what we have already done, using \eqref{kinfreghc}, \eqref{bott}, and diagram \eqref{diag:knil}.\qed

\bigskip

The following theorem can be proved using Theorem \ref{thm:freche}.

\begin{theorem}\label{thm:ubau}(\cite{CT}*{Theorem 8.3.3})
Let $\cK\triqui\cB$ be the ideal of compact operators and let $L$ be a Fr\'echet algebra whose topology is generated by a countable family of submultiplicative seminorms and which has a uniformly bounded, one-sided approximate unit. Then the map 
\[
K_*(L\hotimes\cK)\to K_*^{\topo}(L\hotimes\cK)
\]
is an isomorphism.
\end{theorem}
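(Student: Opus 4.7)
The plan is to reduce Theorem \ref{thm:ubau} to the vanishing $HC_*(L\hotimes\cK)=0$ and then establish that vanishing by combining Wodzicki-style excision with the stability of $\cK$.

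I would first apply Theorem \ref{thm:freche} with $I=\cK$. Part (i) says $L\hotimes\cK$ is $K^{\inf}$-regular, so by \eqref{kinfreghc} we get a natural isomorphism $K_n^{\nil}(L\hotimes\cK)\cong HC_{n-1}(L\hotimes\cK)$. Part (ii) identifies $KH_*(L\hotimes\cK)$ with $K_*^{\topo}(L\hotimes\cK)$. Thus the comparison map $K_*(L\hotimes\cK)\to K_*^{\topo}(L\hotimes\cK)$ factors as $K_*\to KH_*\overset{\cong}{\longrightarrow}K_*^{\topo}$, and by the fibration sequence \eqref{seq:nil} it is an isomorphism in all degrees if and only if $K_*^{\nil}(L\hotimes\cK)=0$, that is, if and only if $HC_*(L\hotimes\cK)=0$. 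The same reduction can be read off directly from the six-term sequence of Theorem \ref{thm:freche}(iii): the comparison map there is an isomorphism precisely when the $HC$ terms vanish.

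Next I would prove $HC_*(L\hotimes\cK)=0$. Three ingredients should combine. First, the Hilbert-space identification $\ell^2(\zN)\hotimes\ell^2(\zN)\cong\ell^2(\zN)$ yields an isomorphism of Fr\'echet algebras $\cK\hotimes\cK\cong\cK$. Second, the uniformly bounded one-sided approximate unit on $L$, together with the canonical finite-rank approximate unit of $\cK$, produces a uniformly bounded one-sided approximate unit on $L\hotimes\cK$; combined with the countable-submultiplicative-seminorm hypothesis, this promotes $L\hotimes\cK$ to an H-unital $\zQ$-algebra in the sense of Suslin-Wodzicki. Third, Wodzicki's excision theorem for cyclic homology of H-unital $\zQ$-algebras then becomes applicable. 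Armed with excision, the stability $\cK\cong\cK\hotimes\cK$ powers a Morita/Eilenberg-swindle style argument forcing $HC_*(L\hotimes\cK)$ to absorb an infinite direct sum of copies of itself, which gives the required vanishing.

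The hard step will be the second one. Algebraic cyclic homology is notoriously fragile under analytic completions, and one cannot simply import continuous-Morita invariance for $\cK$ into the algebraic $HC$ of $L\hotimes\cK$ without genuine work. The delicate point is to leverage the approximate-unit and submultiplicative-seminorm hypotheses on $L$ to bootstrap purely analytic input into strict algebraic H-unitality of $L\hotimes\cK$, at which point Wodzicki's machinery together with the stability $\cK\hotimes\cK\cong\cK$ suffice. Once this analytic-to-algebraic bootstrap is carried out, the reduction furnished by Theorem \ref{thm:freche} closes the argument.
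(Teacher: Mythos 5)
Your reduction is exactly the one the paper uses: Theorem \ref{thm:freche} with $I=\cK$, together with the sequence \eqref{seq:nil} and \eqref{kinfreghc}, reduces the statement to the single claim $HC_*(L\hotimes\cK)=0$, and the paper explicitly says that the proof in \cite{CT} consists of establishing that vanishing and then invoking Theorem \ref{thm:freche}. Your identification of H-unitality of $L\hotimes\cK$ (coming from the uniformly bounded one-sided approximate unit via Wodzicki's criterion, adapted to the Fr\'echet setting) as the bridge from the analytic hypotheses to the algebraic bar and cyclic complexes is also a genuine ingredient of that argument.

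The gap is in the mechanism you propose for the vanishing itself. An Eilenberg swindle driven by excision, Morita invariance and the stability $\cK\hotimes\cK\cong\cK$ cannot produce $HC_*(L\hotimes\cK)=0$, for two reasons. First, the infinite orthogonal sum $a\mapsto\sum_{i\ge 1}V_iaV_i^*$ that any such swindle needs does not take values in $L\hotimes\cK$: an infinite direct sum of copies of a nonzero compact operator is not compact (already $\bigoplus_i p$ for a rank-one projection $p$ has infinite rank). Second, and more structurally, an argument using only excision, stability and Morita invariance would apply verbatim to topological $K$-theory and to $K_0$, and would ``prove'' $K_0(\cK)=0$, contradicting $K_0(\cK)=K_0^{\topo}(\cK)=\zZ$; similarly it would force $HC_0$ of the trace-class ideal $\mathcal{L}^1$ to vanish, whereas the operator trace induces a nonzero functional on $HC_0(\mathcal{L}^1)=\mathcal{L}^1/[\mathcal{L}^1,\mathcal{L}^1]$. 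The vanishing of $HC_*(L\hotimes\cK)$ is therefore not formal: already in degree zero it requires that every element of $L\hotimes\cK$ be a finite sum of commutators, which rests on the operator-theoretic fact that $[\cK,\cK]=\cK$ --- a statement about arithmetic means of singular-value sequences that singles out $\cK$ among operator ideals (it fails for $\mathcal{L}^1$) --- and in higher degrees this must be propagated through the cyclic complex using H-unitality. This is the content of \cite{CT}*{Lemma 8.2.3}, and it is precisely the step your outline replaces with a swindle that does not exist.
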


The particular case of the theorem above when $L$ is a Banach algebra with a one-sided approximate unit confirms a conjecture formulated by Karoubi in \cite{karcomp}.  Wodzicki announced a proof of the latter case in \cite{wodk}*{Theorem 1}; he told us he has also proved the general case of Theorem \ref{thm:ubau}. 
The proof of Theorem \ref{thm:ubau} given in \cite{CT} consists of showing that $HC_*(L\hotimes\cK)=0$, and then using Theorem \ref{thm:freche}. For a different proof see \cite{friendly}*{Theorem 12.1.1}. Karoubi also proved that if $\fA$ is a $C^*$-algebra
and $\sotimes$ is the spatial tensor product, then the comparison map 
\begin{equation}\label{map:karc}
K_*(\fA\sotimes\cK)\to K_*^{\topo}(\fA\sotimes\cK)
\end{equation}
is an isomorphism for $*\le 0$, and conjectured that it is also an isomorphism for $*\ge 1$. This conjecture was proved by Suslin-Wodzicki (\cite{sw}*{Theorem 10.9}). We remark that the analog of Theorem \ref{thm:freche} for $\fA\sotimes\cK$ also holds. Indeed a theorem of Higson (\cite{comparos}*{Theorem 20}) says that $\fA\sotimes\cK$ is $K$-regular, so $KH_*(\fA\sotimes\cK)=K_*^{\topo}(\fA\sotimes\cK)$, since  \eqref{map:karc} is an isomorphism; a similar argument as that of \cite{comparos}*{Theorem 20} shows that $\fA\sotimes\cK$ is also $K^{\inf}$-regular. Finally it is not hard to see that $HC_*(\fA\sotimes\cK)=0$ (the argument is similar to that of \cite{CT}*{Lemma 8.2.3}).

\begin{remark}\label{rem:nofunca}
No nonzero commutative unital $\zQ$-algebra is $K^{\inf}$-regular. Indeed \cite{CT}*{Proposition 3.3.1} says that if a unital algebra $R$ is $K^{\inf}_n$-regular for some $n\ge 1$, then every element $a\in R$ can be written as a finite linear combination
\[
a=\sum_{i=1}^m b_i(x_iy_i-y_ix_i)c_i \qquad (b_i,c_i,x_i, y_i\in R).
\]
\end{remark}

\section{Mayer-Vietoris sequences and descent}\label{sec:mv}

Fix a field $k$. Let $\SchF$ be the category of schemes essentially of finite type over $\Spec(k)$. A \emph{$k$-scheme} is an object of $\SchF$.  A \emph{cohomology theory} (also called \emph{presheaf of spectra}) is a functor $(\SchF)^{op}\to\spt$. Let 
\begin{equation}\label{diag:square}
\xymatrix{
Y'\ar[r]\ar[d]& X'\ar[d]^p\\
Y\ar[r]_i& X
}
\end{equation}
be a cartesian square in $\SchF$. A cohomology theory $E$ of $k$-schemes is said to have the \emph{Mayer-Vietoris} property with respect to \eqref{diag:square} if it sends \eqref{diag:square} to a homotopy cartesian diagram of spectra. This implies that we have a long exact sequence of Mayer-Vietoris type:
\[
E_{n+1}(Y')\to E_n(X)\to E_n(Y)\oplus E_n(X')\to E_n(Y').
\]
In the sequence above, we have used subscript notation; we will switch to superscripts when needed, following the usual convention $E_*=E^{-*}$. 

We consider classes of cartesian squares in $\SchF$ which are closed under isomorphism. We call such a class a \emph{$cd$-structure}. Each such class generates a Grothendieck topology on $\SchF$. We shall consider the following $cd$-structures; note that each of them is contained in the next one:
\begin{itemize}

\item The $cd$-structure of elementary \emph{Zariski} squares. A square \eqref{diag:square} is an elementary Zariski square if $i$ and $p$ are open immersions and $X=Y'\cup X'$. We write \emph{zar} for the Grothendieck topology generated by this structure. 

\item The $cd$-structure of elementary \emph{Nisnevich} squares. A square \eqref{diag:square} is an elementary Nisnevich square if $i$ is an open embedding, $p$ is \'etale, and $p:(X'\setminus Y')_\red\to (X\setminus Y)_\red$ is an isomorphism.  Here $X_\red$ is the reduced scheme. We remark that in the particular case when $p$ is an open immersion, the latter condition is equivalent to the condition that $Y'\cup X'=X$; thus every elementary Zariski square is an elementary Nisnevich square. We write \emph{nis} for the Nisnevich topology.

\item Voevodsky's \emph{combined $cd$-structure}. It consists of the elementary Nisnevich squares and the \emph{abstract blow-up squares}. 
A square \eqref{diag:square} is an abstract blow-up if $p$ is proper, $i$ is a closed embedding and $p:(X'\setminus Y')\to (X\setminus Y)$ is an isomorphism. We write \emph{cdh} for the corresponding topology. 
\end{itemize}

Let $c$ be one of the above $cd$-structures on $\SchF$ and let $t$ be the Grothendieck topology it generates. A cohomology theory $E$ of $k$-schemes satisfies \emph{descent} with respect to $t$ if it has the Mayer-Vietoris property for every square in $c$. There is a construction $(E,t)\mapsto \zH_t(-,E)$ which given a topology $t$ and a cohomology theory $E$ produces a cohomology theory $\zH_t(-,E)$ which satisfies $t$-descent, and a natural transformation (\cite{jardine})
\begin{equation}\label{map:fibrep}
E\to \zH_t(-,E). 
\end{equation}
The case when $t$ comes from a $cd$-structure was studied in depth by Voevodsky, who obtained several key results in his fundamental article \cite{htscd}. Using Voevodsky's results, one can show that if
$t$ comes from a $cd$-structure which meets some technical conditions \cite{chsw}*{Theorem 3.4}, which are satisfied in all the examples we consider here, then $E$ satisfies $t$-descent if and only if the map \eqref{map:fibrep} is a \emph{global weak equivalence}; this means that 
\[
E_*(X)\to \zH^{-*}_t(X,E)
\] 
is an isomorphism for all $X$. 

A cohomology theory of schemes $E$ which satisfies Zariski descent is determined by its value on affine schemes, and 
\begin{equation}\label{eq:affine}
E(R):=E(\Spec R)
\end{equation}
is a homology theory of commutative rings which satisfies Zariski descent. Each of the homology theories of rings considered in the previous sections extends to a cohomology theory of schemes which satisfies Zariski descent so that \eqref{eq:affine} holds. Moreover, we have the following landmark results of Thomason. Recall that a closed immersion is \emph{regular} if it is locally defined by a regular sequence. 
A closed subscheme $Y\subset X$ is of \emph{pure codimension} when all its irreducible components have the same codimension. The square 
\eqref{diag:square} is a \emph{regular blow-up} if $i$ is a regular closed immersion of pure codimension and $p$ is the blow-up along $i$.

\begin{theorem}\label{thm:bt} (Thomason, \cite{tt}*{Theorem 10.8}, \cite{teclate}*{Th\'eor\`eme 2.1})
$K$-theory of schemes satisfies Nisnevich descent and has the Mayer-Vietoris property for regular blow-up squares.
\end{theorem}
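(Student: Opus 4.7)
Both claims fit into Thomason--Trobaugh's framework of perfect complexes and $K$-theory with supports. For a closed subscheme $Z\subset X$ with open complement $U$ the fundamental tool is the localization fiber sequence
\[
K(X\text{ on }Z)\to K(X)\to K(U).
\]
Given a cartesian square \eqref{diag:square} in which $Y\hookrightarrow X$ is open, applying the localization sequence to both rows reduces Mayer--Vietoris to showing that the pullback $p^*:K(X\text{ on }Z)\to K(X'\text{ on }Z')$ (with $Z'=X'\setminus Y'$) is a weak equivalence.

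For Nisnevich descent I would first invoke Voevodsky's descent criterion for $cd$-structures to reduce the claim to Mayer--Vietoris on elementary Nisnevich squares. In such a square $p:X'\to X$ is \'etale and restricts to an isomorphism $Z'_{\red}\iso Z_{\red}$; \'etaleness together with bijectivity on the reduced closed subschemes forces $p:Z'\to Z$ to be an actual isomorphism. By the reduction above the problem becomes \emph{\'etale excision for $K$ with supports}, namely $p^*:K(X\text{ on }Z)\iso K(X'\text{ on }Z')$. This follows by identifying each side with the Waldhausen $K$-theory of perfect complexes acyclic on $U$ (resp.\ $U'$) and checking that $p^*$ is an equivalence of these Waldhausen categories; the key local input is that the strict henselizations of $X$ and $X'$ along the identified closed subschemes are canonically isomorphic, so perfect complexes with support can be lifted and glued uniquely.

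For the Mayer--Vietoris property of a regular blow-up along $i:Y\hookrightarrow X$ of pure codimension $d$, with blow-up $\pi:\tilde X\to X$ and exceptional divisor $j:\tilde Y=\bP(\cN_{Y/X})\hookrightarrow\tilde X$, I would invoke Thomason's derived blow-up formula. It provides a semi-orthogonal decomposition
\[
D^{\mathrm{perf}}(\tilde X)=\langle\pi^*D^{\mathrm{perf}}(X),\,D^{\mathrm{perf}}(Y)_1,\,\dots,\,D^{\mathrm{perf}}(Y)_{d-1}\rangle,
\]
where each $D^{\mathrm{perf}}(Y)_k$ is the image of the fully faithful functor $F\mapsto j_*(q^*F\otimes\cO_{\tilde Y}(k))$, with $q:\tilde Y\to Y$ the bundle map. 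By Waldhausen additivity this gives $K(\tilde X)\simeq K(X)\oplus K(Y)^{d-1}$, while the projective bundle theorem gives $K(\tilde Y)\simeq K(Y)^d$. Under these decompositions the $K$-theory square
\[
\xymatrix{K(X)\ar[r]^{\pi^*}\ar[d]_{i^*}&K(\tilde X)\ar[d]^{j^*}\\K(Y)\ar[r]_{q^*}&K(\tilde Y)}
\]
becomes an explicit diagram of sums of copies of $K(X)$ and $K(Y)$, and a direct computation of $j^*\pi^*=q^*i^*$ together with the Koszul formula for $j^*j_*$ applied to a divisor matches the homotopy fibers of the two columns, showing that the square is homotopy cartesian.

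The technical heart of the argument is, for the regular blow-up, the construction of the Koszul-type resolution of $\cO_{\tilde Y}$ as an $\cO_{\tilde X}$-module and the verification of full faithfulness of each embedding $D^{\mathrm{perf}}(Y)_k\hookrightarrow D^{\mathrm{perf}}(\tilde X)$ together with the required semi-orthogonalities; this is the main content of \cite{teclate}. For Nisnevich, the analogous obstacle is upgrading the pointwise henselian identification into a genuine equivalence of Waldhausen categories of perfect complexes with supports, as worked out in \cite{tt}.
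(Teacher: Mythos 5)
The paper gives no proof of this theorem: it is quoted from Thomason--Trobaugh \cite{tt}*{Theorem 10.8} and Thomason \cite{teclate}*{Th\'eor\`eme 2.1}, with only the remark that the main technical tool is the Thomason--Waldhausen localization theorem for complicial categories of perfect complexes. Your outline --- the localization fiber sequence for $K$-theory with supports, \'etale excision to handle elementary Nisnevich squares, and the projective-bundle and blow-up decompositions of the perfect derived categories (fed into additivity) for regular blow-ups --- is an accurate sketch of exactly those cited proofs, so it is essentially the same approach the paper points to.
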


Thomason defined the $K$-theory of a quasi-compact and separated scheme $X$ in terms of the complicial category of perfect complexes of quasi-coherent sheaves; the main technical tool for proving Theorem \ref{thm:bt} is the Thomason-Waldhausen localization theorem (\cite{tt}*{Theorems 1.8.2 and 1.9.8}) which roughly says that $K$-theory maps sequences of complicial categories which induce exact sequences of derived categories into homotopy fibration sequences of spectra. Weibel introduced cyclic homology of schemes as the hyperhomology of the sheafified Connes' complex \cite{whcsch}. He and Geller showed in \cite{gwet}*{Theorem 4.8} that $HC$ satisfies Nisnevich descent. Keller proved a version of the Thomason-Waldhausen localization theorem for cyclic homology \cite{keloco}*{Theorem 2.4} and showed \cite{kesch}*{Section 5.2} that for quasi-compact separated schemes the cyclic homology of schemes introduced by Weibel in \cite{whcsch} admits a categorical description analogous to that of $K$-theory. The proof given in \cite{chsw} of the following theorem uses the latter result of Keller and the argument of Thomason's proof of Theorem \ref{thm:bt}.

\begin{theorem}\label{thm:reghc}(\cite{gwet}*{Theorem 4.8}, \cite{chsw}*{Theorems 2.9 and 2.10})
Cyclic, negative cyclic and periodic cyclic homology satisfy Nisnevich descent and have the Mayer-Vietoris property for regular blow-ups.
\end{theorem}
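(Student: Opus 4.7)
The plan is to treat Nisnevich descent and the Mayer-Vietoris property for regular blow-ups separately, though both rest on Keller's realization of the cyclic homology of a quasi-compact separated scheme as a localizing invariant of the DG category of perfect complexes, together with his analogue of the Thomason-Waldhausen localization theorem \cite{keloco}*{Theorem 2.4}, \cite{kesch}*{Section 5.2}. The upshot of this framework is that $HC$, $HN$ and $HP$ all arise as localizing invariants of the same DG data, so any Mayer-Vietoris statement that can be derived from an exact sequence of DG enhancements applies to all three at once.

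For Nisnevich descent, by Voevodsky's machinery for $cd$-structures \cite{htscd} it suffices to verify the Mayer-Vietoris property for elementary Nisnevich squares. This is the content of \cite{gwet}*{Theorem 4.8}: Weibel and Geller work at the level of the sheafified Connes complex, using flatness of \'etale maps to show that the relevant square of Hochschild complexes becomes cartesian on Nisnevich stalks; passing to hypercohomology converts this into a homotopy pullback of spectra, and the same argument applies to the sheafified negative and periodic complexes.

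For the blow-up formula, the strategy is to adapt Thomason's proof of Theorem \ref{thm:bt} to cyclic homology. Given a regular blow-up $p\colon \tilde X\to X$ along a regular closed immersion $i\colon Y\to X$ of pure codimension $d$, with exceptional divisor $\tilde Y\subset \tilde X$, Thomason (building on Beilinson's resolution of the diagonal for the projective bundle $\tilde Y\to Y$) established a semi-orthogonal decomposition
\[
D^{\mathrm{perf}}(\tilde X)\simeq p^{*}D^{\mathrm{perf}}(X)\oplus\bigoplus_{j=1}^{d-1}D^{\mathrm{perf}}(Y),
\]
compatible with the evident base-change square. Lifting this decomposition to a short exact sequence of DG enhancements of the corresponding categories of perfect complexes and feeding it into Keller's localization theorem, one obtains a homotopy cartesian square of $HC$-spectra attached to \eqref{diag:square}; since $HN$ and $HP$ are localizing invariants of the same DG input, they inherit the property from the same exact sequence.

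The main technical obstacle is precisely this lifting step: one must promote Thomason's semi-orthogonal decomposition from the triangulated level, where it was originally formulated for $K$-theory via Waldhausen's approximation machinery, to the level of DG enhancements so that Keller's version of the localization theorem applies. This is one of the principal technical contributions of \cite{chsw}; once it is available, the Mayer-Vietoris square for cyclic homology falls out by the same formal manipulations as in Thomason's original $K$-theoretic argument, and combined with Nisnevich descent it yields the full statement of the theorem.
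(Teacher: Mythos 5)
Your proposal follows the same route the paper indicates for this theorem: Nisnevich descent is taken from Geller--Weibel's sheaf-level argument \cite{gwet}*{Theorem 4.8}, and the Mayer--Vietoris property for regular blow-ups is obtained by running Thomason's blow-up argument through Keller's localization theorem \cite{keloco}*{Theorem 2.4} and his categorical description of Weibel's cyclic homology of schemes \cite{kesch}*{Section 5.2}, exactly as in \cite{chsw}*{Theorems 2.9 and 2.10}. The only cosmetic difference is that you phrase Thomason's decomposition of the blow-up in semi-orthogonal (Orlov-style) language, but the substance and the key technical point --- lifting the decomposition to the DG/complicial level so that Keller's theorem applies --- are the same.
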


\begin{corollary}\label{coro:smooth}
Let $E\in\{HC, HN, HP, K\}$ and let $X\in\SchF$. Assume that $X$ is smooth. 
Then the map $E_*(X)\to H_{\cdh}^{-*}(X,E)$ is an isomorphism. 
\end{corollary}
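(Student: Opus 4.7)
\noindent\emph{Proof plan.} The plan is to combine the Nisnevich descent and Mayer--Vietoris property for regular blow-ups furnished by Theorems~\ref{thm:bt} and~\ref{thm:reghc} with resolution of singularities, in order to verify cdh-descent of $E$ over smooth schemes (assuming $\mathrm{char}(k)=0$ where needed). By the criterion for $t$-descent from a $cd$-structure alluded to after~\eqref{map:fibrep} (namely \cite{chsw}*{Theorem~3.4}), cdh-descent of a cohomology theory is equivalent to its having the Mayer--Vietoris property for every square in Voevodsky's combined $cd$-structure, i.e.\ for every elementary Nisnevich square and every abstract blow-up square. The Nisnevich case is already granted, so the task reduces to establishing MV for an abstract blow-up square~\eqref{diag:square} with $X$ smooth.

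To that end I would apply Hironaka's resolution of singularities to produce a smooth scheme $\tilde X$ and a proper birational morphism $q\colon\tilde X\to X$ that factors through $p\colon X'\to X$ and is realised as a finite composition of blow-ups along smooth centers of pure codimension. Each such intermediate step is a regular blow-up square, so iterating the MV property given by Theorems~\ref{thm:bt} and~\ref{thm:reghc} yields MV for the composite square associated to $q$. One then transfers MV from $q$ to $p$ by induction on the dimension of the center $Y$, comparing the long exact Mayer--Vietoris sequences attached to $q$, to the intermediate refinement $\tilde X\to X'$, and to $p$, and using Nisnevich descent on the lower-dimensional strata of $Y$ to control the relative terms.

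The main obstacle will be this last transfer step: passing from Mayer--Vietoris for the regular refinement $q$ back to Mayer--Vietoris for the original abstract blow-up $p$. Making this rigorous demands careful bookkeeping of the loci on which each intermediate morphism fails to be an isomorphism, and it anticipates the strategy underlying Haesemeyer's theorem (Theorem~\ref{thm:chh}) reviewed in Section~\ref{sec:chh}.
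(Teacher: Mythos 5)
Your strategy contains a genuine flaw: you propose to prove the corollary by establishing the Mayer--Vietoris property of $E$ for \emph{every} abstract blow-up square whose base $X$ is smooth, but this intermediate goal is both stronger than what is needed and, for $E\in\{K,HC,HN\}$, actually false. In an abstract blow-up square over a smooth $X$, the center $Y$, the modification $X'$ and the fiber $Y'$ may be singular or non-reduced, and on such schemes these theories fail excision and invariance under infinitesimal thickenings. For instance, take $X=\mathbb{A}^2$, $Y$ the first infinitesimal neighborhood of the origin, and $X'$ the blow-up at the reduced origin: homotopy cartesianness of the $K$-theory square would force $K(Y,Y_{\red})\simeq K(Y',Y'_{\red})$, which fails because relative $K$-theory of nilpotent ideals is nontrivial and not preserved by $p$. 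This failure is precisely what the groups $\cF_*^{HC}$ measure (Theorem \ref{thm:fhc}), so it cannot be argued away. Accordingly, the ``transfer step'' you flag as the main obstacle is not a bookkeeping issue but the place where the argument genuinely breaks: the induction you sketch would need excision and nilinvariance of $E$ as inputs --- that is, the full hypotheses of Theorem \ref{thm:chhg} --- and $K$, $HC$, $HN$ do not satisfy them.

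The paper's proof sidesteps all of this by quoting \cite{chsw}*{Corollary 3.9}. The correct mechanism is to restrict to the subcategory of \emph{smooth} schemes, where (by resolution of singularities together with Voevodsky's theory of $cd$-structures) the $\cdh$-topology is generated by elementary Nisnevich squares and regular blow-up squares \emph{all four of whose corners are smooth}. For exactly these squares Theorems \ref{thm:bt} and \ref{thm:reghc} supply the Mayer--Vietoris property, so the restriction of $E$ to $\mathrm{Sm}/k$ satisfies descent for that $cd$-structure; one then identifies the fibrant replacement computed on $\mathrm{Sm}/k$ with $\zH_{\cdh}(X,E)$ computed on all of $\SchF$ for smooth $X$. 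Your inputs (Nisnevich descent, Mayer--Vietoris for regular blow-ups, resolution of singularities) are the right ones, but they must be fed into Voevodsky's descent criterion on the smooth subcategory rather than used to attack arbitrary abstract blow-ups over a smooth base.
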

\begin{proof} By \cite{chsw}*{Corollary 3.9}, any cohomology theory on $\SchF$ which satisfies Nisnevich descent and has the Mayer-Vietoris property for regular blow-ups also satisfies this.
\end{proof}
  
A homology theory of $k$-schemes $E$ is \emph{invariant under infinitesimal thickenings} if $E_*(X)\to E_*(X_\red)$ is an isomorphism for all $X$. An abstract blow-up square is a \emph{finite blow-up} if $p$ is a finite morphism. If $E$ satisfies Zariski descent then $E$ is invariant under infinitesimal thickenings (resp. has the Mayer-Vietoris property for finite blow-ups) if and only if the associated homology of commutative algebras \eqref{eq:affine} is nilinvariant (resp. satisfies excision).  

The Chern character \eqref{map:ch} extends to schemes. If $X\in\SchF$ and $k$ is of characteristic zero it is given by a map of spectra
$ch:K(X)\to HN(X)$; infinitesimal $K$-theory is defined as $K^{\inf}(X)=\hofi(K(X)\to HN(X))$. 

The following is an immediate corollary of Theorems \ref{thm:kinf}, \ref{thm:bt} and \ref{thm:reghc}.

\begin{corollary}\label{coro:kinfb}(\cite{chsw}*{Theorems 4.3 and 4.4})
Let $k$ be a field of characteristic zero. Then the infinitesimal $K$-theory of $k$-schemes is invariant under infinitesimal thickenings, satifies Nisnevich descent, and has the Mayer-Vietoris property for regular blow-ups and for finite blow-ups.
\end{corollary}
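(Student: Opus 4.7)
\textbf{Proof proposal for Corollary \ref{coro:kinfb}.} The plan is to exploit the defining fiber sequence $K^{\inf}(X)=\hofi(K(X)\to HN(X))$ together with the fact that homotopy fibers commute with homotopy limits of spectra. Both the Mayer--Vietoris property for a given square and the descent property with respect to a $cd$-structure that satisfies Voevodsky's axioms are expressible as the homotopy cartesianness of certain squares, so each such property of $K$ and $HN$ is inherited by $K^{\inf}$.

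First I would handle Nisnevich descent and Mayer--Vietoris for regular blow-ups. Fix a square $\square$ in either the Nisnevich $cd$-structure or the class of regular blow-up squares. By Theorems \ref{thm:bt} and \ref{thm:reghc}, both $K(\square)$ and $HN(\square)$ are homotopy cartesian squares of spectra. Since the levelwise homotopy fiber of a map between two homotopy cartesian squares is again homotopy cartesian (both constructions are finite homotopy limits, hence commute), the induced square $K^{\inf}(\square)$ is homotopy cartesian. Ranging over all such $\square$ yields the first two assertions; by the criterion recalled after \eqref{map:fibrep}, Nisnevich descent for $K^{\inf}$ follows from Mayer--Vietoris for all Nisnevich squares.

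Next I would deduce invariance under infinitesimal thickenings and Mayer--Vietoris for finite blow-ups. Since the Zariski $cd$-structure is contained in the Nisnevich one, the previous step already shows that $K^{\inf}$ satisfies Zariski descent. By the principle recalled in the paragraph just before the corollary, it then suffices to verify that the associated homology theory of commutative rings, $R\mapsto K^{\inf}(\Spec R)$, is nilinvariant and satisfies excision. Because $k$ has characteristic zero, every commutative $k$-algebra is a $\zQ$-algebra, so this is exactly the content of Theorem \ref{thm:kinf}.

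There is no real obstacle: the only point that requires a moment's thought is the first step, where one must know that the homotopy fiber of a map of homotopy cartesian squares of spectra is homotopy cartesian. Everything else is bookkeeping on top of Theorems \ref{thm:kinf}, \ref{thm:bt}, and \ref{thm:reghc} and the Zariski-descent reduction from scheme-level properties to ring-level properties recalled in the text, which is precisely why the corollary is labeled immediate.
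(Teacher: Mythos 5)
Your proposal is correct and follows exactly the route the paper intends: the paper offers no written proof beyond declaring the corollary ``immediate'' from Theorems \ref{thm:kinf}, \ref{thm:bt} and \ref{thm:reghc}, and your argument supplies precisely the missing details --- the fiber sequence defining $K^{\inf}$ inherits homotopy cartesianness of Nisnevich and regular blow-up squares from $K$ and $HN$, and the remaining two properties reduce via the Zariski-descent principle stated just before the corollary to the nilinvariance and excision of $K^{\inf}$ for $\zQ$-algebras.
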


\section{Haesemeyer's theorem}\label{sec:chh} 
In this section $k$ is a field of characteristic zero. If $E$ is a cohomology theory of $k$-schemes, we write $\cF^E(X)=\hofi(E(X)\to H_{cdh}(X,E))$. Thus we have
a homotopy fibration sequence
\begin{equation}\label{seq:fibcdh}
\cF^E(X)\to E(X)\to \zH_{\cdh}(X,E).
\end{equation}
Both $\zH_{\cdh}(X,-)$ and $\cF^{-}(X)$ preserve homotopy fibration sequences; this fact is used in diagram \ref{diag:fk} below. 

\begin{theorem}\label{thm:chh}(Haesemeyer, \cite{chh}*{Theorem 6.4})
Let $k$ be a field of characteristic zero and $X\in\SchF$. Then  $KH(X)\weq \zH_\cdh(X,K)$.
\end{theorem}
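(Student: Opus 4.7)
The strategy is to reduce the theorem to two facts: (i) the map $K \to KH$ is a $cdh$-local weak equivalence, so that $\zH_{\cdh}(-, K) \weq \zH_{\cdh}(-, KH)$; and (ii) $KH$ itself satisfies $cdh$-descent, so that $KH \weq \zH_{\cdh}(-, KH)$. Combining these gives
\[
KH(X) \weq \zH_{\cdh}(X, KH) \lweq \zH_{\cdh}(X, K),
\]
which is the theorem.

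For (i), smooth $k$-schemes are $K$-regular when $k$ has characteristic zero (their local rings are regular, hence $K_n$-regular for all $n$), so $K(U) \weq KH(U)$ for every smooth $U \in \SchF$. By Hironaka's resolution of singularities, every $X \in \SchF$ admits a $cdh$-hypercover by smooth schemes, and this forces the induced map $\zH_{\cdh}(-, K) \to \zH_{\cdh}(-, KH)$ to be a global weak equivalence.

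For (ii), by Voevodsky's $cd$-structure machinery (\cite{chsw}*{Theorem 3.4}), verifying $cdh$-descent for $KH$ amounts to checking Nisnevich descent together with the Mayer-Vietoris property for abstract blow-up squares. Nisnevich descent for $KH$ is inherited from Nisnevich descent for $K$ (Theorem \ref{thm:bt}), because the homotopization procedure $E \mapsto EH$ is realized as a (sequential/filtered) homotopy colimit of spectra, and such colimits preserve homotopy Cartesian squares. The Mayer-Vietoris property for regular blow-ups is inherited from Theorem \ref{thm:bt} in the same manner. For a general abstract blow-up square I would use Hironaka's resolution of singularities to dominate the given proper birational morphism by a composition of regular blow-ups together with finite abstract blow-ups; for the finite case, Mayer-Vietoris for $KH$ follows Zariski-locally from the excision property of $KH$ (Theorem \ref{thm:kh}).

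The hard part will be the Mayer-Vietoris property for arbitrary abstract blow-ups. This is where characteristic zero enters essentially, via Hironaka's theorem, and one must arrange the class of Mayer-Vietoris-good squares for $KH$ to have enough closure properties (under composition, base change, and refinement by blow-ups) to propagate the property from the regular and finite cases to all abstract blow-ups. This bookkeeping is the heart of the argument, and also the reason why the analogous statement in positive characteristic (Theorem \ref{thm:ghchh}) requires different techniques.
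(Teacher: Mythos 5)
Your two-step reduction --- (i) $K\to KH$ is a $cdh$-local weak equivalence because the two agree on smooth schemes and every $X\in\SchF$ is $cdh$-locally smooth by Hironaka, and (ii) $KH$ itself satisfies $cdh$-descent because it inherits Nisnevich descent and the Mayer-Vietoris property for regular blow-ups from $K$-theory via the homotopization, satisfies excision, and these properties propagate to all abstract blow-up squares via resolution of singularities --- is exactly the structure of Haesemeyer's proof as the paper describes it (Mayer-Vietoris for regular blow-ups is \cite{chh}*{Theorem 3.6}, and the rest uses excision and invariance under infinitesimal thickenings, cf.\ Theorem \ref{thm:chhg}). Two points to tighten: the homotopization is a geometric realization of a simplicial spectrum rather than a sequential/filtered colimit (it still preserves homotopy cartesian squares because spectra are stable), and invariance under infinitesimal thickenings --- supplied by Theorem \ref{thm:kh} but not named in your sketch --- must be invoked explicitly alongside excision in the abstract blow-up bookkeeping, since Hironaka applies to reduced schemes and the squares produced in the reduction need not have reduced vertices.
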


It follows from the theorem above that for $E=K$ and $X=\Spec A$, the sequence \eqref{seq:fibcdh} is equivalent to the middle row of diagram \eqref{diag:knil}. For $X\in\SchF$ we have a homotopy commutative diagram whose rows and columns are homotopy fibration sequences:
\begin{equation}\label{diag:fk}
\xymatrix{
\cF^{K^{\inf}}(X)\ar[r]\ar[d]&K^{\inf}(X)\ar[d]\ar[r]& H_{\cdh}(X, K^{\inf})\ar[d]\\
\cF^K(X)\ar[r]\ar[d]&K(X)\ar[d]\ar[r]&KH(X)\ar[d]\\
\cF^{HN}(X)\ar[r]&HN(X)\ar[r]&H_{\cdh}(X,HN). 
}
\end{equation}

Theorem \ref{thm:chh} implies that $KH$ satisfies $cdh$-descent. On the other hand we know from \cite{kh}*{Theorem 1.2} that it is also \emph{homotopy invariant}, that is, it sends the projection $X\times \zA^1\to X$ to an isomorphism $KH_*(X)\cong KH_*(X\times\zA^1)$. For the proof of Theorem \ref{thm:chh}, Haesemayer first showed that $KH$ has the Mayer-Vietoris property for regular
blow-ups (\cite{chh}*{Theorem 3.6}). In almost all the rest of the proof, the only other properties of $KH$ that are used are excision and invariance under infinitesimal thickenings. Homotopy invariance is used only once, in \cite{chh}*{Proposition 3.7}. It was found later that homotopy invariance is not needed; this led to the following generalization
of Haesemeyer's theorem.

\begin{theorem}\label{thm:chhg}(\cite{chsw}*{Theorem 3.12})
Let $k$ be a field of characteristic zero and let $E$ be a cohomology theory of $k$-schemes. 
Assume that $E$ satisfies excision, is invariant under infinitesimal thickenings, satisfies Nisnevich descent, and has the
Mayer-Vietoris property for regular blow-ups. Then $E$ satisfies $\cdh$-descent.
\end{theorem}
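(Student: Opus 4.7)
The approach is to adapt the argument of Haesemeyer for Theorem \ref{thm:chh} but to remove its sole use of homotopy invariance. Voevodsky's theory of $cd$-structures (see \cite{htscd}) gives a general criterion: since the combined $cd$-structure is the union of the Nisnevich $cd$-structure and the $cd$-structure of abstract blow-ups, and the technical hypotheses of \cite{chsw}*{Theorem 3.4} are met, a cohomology theory on $\SchF$ satisfies $cdh$-descent if and only if it satisfies Nisnevich descent and has the Mayer-Vietoris property for abstract blow-up squares. Nisnevich descent is assumed, so the whole problem reduces to establishing the Mayer-Vietoris property for an arbitrary abstract blow-up square.

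First I would make some preliminary reductions. Using Nisnevich descent one may pass to the case where $X$ is affine. Using invariance under infinitesimal thickenings one may assume that $X$, $X'$, $Y$ and $Y'$ are all reduced (one thickens $Y'$ to the scheme-theoretic preimage of $Y$ and then reduces). Then I would proceed by Noetherian induction on $\dim X$: the case $\dim X = 0$ is trivial because an abstract blow-up of a reduced zero-dimensional scheme is an isomorphism. For the inductive step, Hironaka's resolution of singularities (here $\operatorname{char} k = 0$ is essential) furnishes a composition of blow-ups at smooth centers $\tilde{X}\to X'$ making $\tilde{X}$ smooth, and a further resolution that flattens $\tilde{X}\to X$. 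Using the Mayer-Vietoris property for regular blow-ups (applied to the sequence of smooth-center blow-ups), together with the inductive hypothesis applied to the lower-dimensional exceptional loci and the invariance under infinitesimal thickenings to identify the various reduced structures, one may replace the given abstract blow-up by one in which $X'\to X$ is a sequence of regular blow-ups with smooth centers.

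At this last stage the Mayer-Vietoris property follows directly from the hypothesis, after iterating along the smooth-center sequence. The delicate point, and what is actually the main obstacle, is the step that originally invoked \cite{chh}*{Proposition 3.7}, namely the comparison between the ``abstract'' blow-up and the resolved one at the level of the exceptional fibers: one needs to know that a proper birational map $f:X'\to X$ over a smooth $X$, with both $X'$ and its exceptional divisor smooth, can be analyzed via smooth-center blow-ups and fits into Mayer-Vietoris fibrations. In Haesemeyer's original setting this was packaged using $\mathbb A^1$-invariance. The key input that lets us avoid it is that the only ``defect'' propagated through the resolution process is the non-reducedness of exceptional divisors, and this defect is already killed by the hypothesis of invariance under infinitesimal thickenings; excision then allows one to glue the Mayer-Vietoris squares obtained at each step of the resolution without ever needing to invert the projection $X\times\zA^1\to X$.

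Finally, combining these pieces, $E$ has the Mayer-Vietoris property for every abstract blow-up square, and together with the assumed Nisnevich descent Voevodsky's criterion yields $cdh$-descent, as required.
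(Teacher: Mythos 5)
Your overall architecture is the right one, and it is the one followed in \cite{chsw}*{Theorem 3.12}, whose proof the present survey only sketches: by Voevodsky's criterion (\cite{chsw}*{Theorem 3.4}) the statement reduces to the Mayer--Vietoris property for abstract blow-up squares, and this is attacked by induction on $\dim X$ using Hironaka's resolution of singularities and platification, with excision plus invariance under infinitesimal thickenings supplying the Mayer--Vietoris property for \emph{finite} abstract blow-up squares and the stated hypothesis supplying it for blow-ups along regularly embedded centers. Up to this skeleton your proposal agrees with the intended argument.

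There is, however, a genuine gap at exactly the point the theorem is designed to address. First, you apply ``the Mayer--Vietoris property for regular blow-ups \dots to the sequence of smooth-center blow-ups'' produced by resolution. But the centers occurring in a resolution of a \emph{singular} scheme are smooth subschemes of a singular ambient scheme, typically contained in its singular locus, and such centers are in general not regularly embedded: the vertex of the affine cone $z^2=xy$ is a smooth center whose maximal ideal cannot be generated by a regular sequence, so the blow-up resolving this cone is not a regular blow-up. The hypothesis therefore does not apply to the squares in the resolution tower; handling them is the whole difficulty, and it is precisely where Haesemeyer originally invoked homotopy invariance. Second, your proposed substitute for that step --- that the only defect propagated is non-reducedness of exceptional divisors, killed by infinitesimal invariance, with excision gluing the pieces --- cannot be the missing argument: as the survey itself points out, excision and invariance under infinitesimal thickenings were already used throughout Haesemeyer's proof, and he still needed homotopy invariance once, in \cite{chh}*{Proposition 3.7}. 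Eliminating that single use requires a new argument (this is the actual content of the corresponding lemma in \cite{chsw}), not a re-invocation of axioms that were already available; as written, your proof asserts the conclusion of the hard step rather than proving it. A minor further inaccuracy: an abstract blow-up of a reduced zero-dimensional scheme need not be an isomorphism (take $Y=X=\Spec k$ and $X'=\bP^1_k$); the base case of the induction still holds, but because the right-hand column of the square is then an equivalence, not for the reason you give.
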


By Corollary \ref{coro:kinfb}, Theorem \ref{thm:chhg} applies to $K^{\inf}$. It follows that $\cF_*^{K^{\inf}}(X)=0$ in diagram \eqref{diag:fk}, and therefore we have a weak equivalence
\[
\cF^K(X)\weq \cF^{HN}(X).
\]
By Cuntz-Quillen's theorem \eqref{cq} and by Theorems \ref{thm:goo} and \ref{thm:reghc}, $HP$ also satisfies the hypothesis of Theorem \ref{thm:chhg}. It follows from this and the $SBI$-sequence that there is a weak equivalence 
\[
\cF^{HC}(X)[-1]\weq\cF^{HN}(X).
\]
Summing up, we have proved the following.

\begin{theorem}\label{thm:fhc}(\cite{chw}*{Theorem 1.6}, see also \cite{chsw}*{Corollary 3.13 and Theorem 4.6})
Let $k$ be a field of characteristic zero and $X\in\SchF$. Then there is a homotopy fibration sequence
\[
\cF^{HC}(X)[-1]\to K(X)\to KH(X).
\]
\end{theorem}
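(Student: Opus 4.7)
The plan is to assemble the theorem from pieces already laid out in the excerpt, by showing that the fibers $\cF^{(-)}(X)$ of the maps to $cdh$-hypercohomology vanish on both $K^{\inf}$ and $HP$, and then chasing the two fibration sequences at our disposal: the columns of diagram \eqref{diag:fk} and the $SBI$-sequence \eqref{seq:sbi}.

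First I would apply Theorem \ref{thm:chhg} to the cohomology theory $K^{\inf}$ on $\SchF$. By Corollary \ref{coro:kinfb}, $K^{\inf}$ satisfies excision, is invariant under infinitesimal thickenings, satisfies Nisnevich descent, and has the Mayer-Vietoris property for regular blow-ups; hence it satisfies $\cdh$-descent, so the top row of \eqref{diag:fk} gives $\cF^{K^{\inf}}(X)\simeq *$. Reading the left column of \eqref{diag:fk} (which is a homotopy fibration sequence because $\cF^{(-)}(X)$ preserves them), this forces a weak equivalence $\cF^K(X)\weq \cF^{HN}(X)$.

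Next I would verify the hypotheses of Theorem \ref{thm:chhg} for $HP$. Cuntz-Quillen's theorem \eqref{cq} gives excision; Goodwillie's Theorem \ref{thm:goo} (in the form $HP(R\otimes\zQ:I\otimes\zQ)=0$ for $I$ nilpotent, which upon passage to the affine cohomology theory \eqref{eq:affine} is invariance under infinitesimal thickenings); and Theorem \ref{thm:reghc} provides Nisnevich descent and Mayer-Vietoris for regular blow-ups. Thus Theorem \ref{thm:chhg} applies and $\cF^{HP}(X)\simeq *$. Applying $\cF^{(-)}(X)$ to the $SBI$-fibration $HC[-1]\to HN\to HP$ then yields $\cF^{HC}(X)[-1]\weq \cF^{HN}(X)$, and combining with the previous step,
\[
\cF^{HC}(X)[-1]\weq \cF^K(X).
\]

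To conclude, I would invoke the defining fibration sequence \eqref{seq:fibcdh} for $E=K$,
\[
\cF^K(X)\to K(X)\to \zH_{\cdh}(X,K),
\]
and identify the right-hand term using Haesemeyer's Theorem \ref{thm:chh}, which gives $KH(X)\weq \zH_{\cdh}(X,K)$. Substituting the identification of $\cF^K(X)$ with $\cF^{HC}(X)[-1]$ yields the desired homotopy fibration sequence. The only genuinely nontrivial input is Theorem \ref{thm:chhg}; the rest of the argument is a clean two-step diagram chase, so the main ``obstacle'' has already been overcome in establishing that generalization of Haesemeyer's theorem.
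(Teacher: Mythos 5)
Your proposal is correct and follows essentially the same route as the paper: apply Theorem \ref{thm:chhg} to $K^{\inf}$ (via Corollary \ref{coro:kinfb}) and to $HP$ (via \eqref{cq}, Theorem \ref{thm:goo} and Theorem \ref{thm:reghc}) to kill both fibers, chase diagram \eqref{diag:fk} and the $SBI$-sequence to get $\cF^{HC}(X)[-1]\weq\cF^K(X)$, and identify $\zH_{\cdh}(X,K)$ with $KH(X)$ by Theorem \ref{thm:chh}. No gaps; this is exactly the argument given in the paragraphs preceding the theorem.
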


Theorems \ref{thm:chh} and \ref{thm:fhc} together say that the obstruction to $\cdh$-descent in algebraic $K$-theory
is measured by $\cF^{HC}$. By Corollary \ref{coro:smooth} and Hironaka's desingularization theorem \cite{hiro} the groups $\cF_*^{HC}(X)$
can be computed in terms of the cyclic homology of $X$ and of the cyclic homology of a finite array (called a hyperresolution) of smooth schemes that appear in the desingularization process. In this sense we can say that the fiber of $K(X)\to KH(X)$ can be computed in terms of cyclic homology. Note that this represents a lot of progress from our starting point. Indeed regarding $KH$ as $K$-theory made homotopic lead us to a map $K_*^{\nil}\to HC_{*-1}$ (see \eqref{diag:knil})
that is never an isomorphism in the commutative case (see Remark \ref{rem:nofunca}); regarding it instead as a version of $K$-theory with $\cdh$-descent lead us to a character $K_*^{\nil}=\cF_*^K\to \cF_{*-1}^{HC}$ which is always an isomorphism. Haesemeyer's theorem made all the difference. 

Theorems \ref{thm:chh} and \ref{thm:fhc} are powerful tools for studying the $K$-theory of singular schemes. In the next sections we review a number of results whose proofs used these tools.

\begin{remark}
Building on work of Ayoub and ideas of Voevodsky, Cisinski proved in \cite{cis}*{Th\'eor\`eme 3.9} that $KH$ satisfies $cdh$-descent in the category of Noetherian schemes of finite dimension. Since the latter category includes the schemes of finite type over a field of any characteristic, Cisinki's result is far more general than Haesemeyer's Theorem \ref{thm:chh}. However, Cisinski's proof relies heavily on homotopy invariance, and therefore no analog of Theorem \ref{thm:chhg} can be derived from his argument. A version of Theorem \ref{thm:chhg} for schemes of finite type over a perfect field which admits strong resolution of singularities, due to Geisser-Hesselholt, is given in Theorem \ref{thm:ghchh} below. 
\end{remark}

\section{Weibel's dimension conjecture}\label{sec:dimconj}

\begin{conjecture}\label{conj:dim} (Weibel's dimension conjecture \cite{wana}*{Questions 2.9})
Let $X$ be a Noe- therian scheme of dimension $d$. Then $K_m(X)=0$ for $m<-d$ and $X$ is $K_{-d}$-regular.
\end{conjecture}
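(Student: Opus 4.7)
The plan is for the case $X\in\SchF$, with $k$ of characteristic zero (this is Theorem \ref{thm:dim}, the case to which the tools of Sections \ref{sec:mv}--\ref{sec:chh} apply). The strategy is to exploit the homotopy fibration sequence of Theorem \ref{thm:fhc},
\[
\cF^{HC}(X)[-1]\to K(X)\to KH(X),
\]
and control the two outer terms by $cdh$-descent. Throughout write $d=\dim X$ and $\underline{t}=(t_1,\dots,t_p)$.

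\emph{Vanishing in degrees $m<-d$.} By Theorem \ref{thm:chh}, $KH(X)\simeq \zH_{\cdh}(X,K)$, and by definition $\cF^{HC}(X)=\hofi(HC(X)\to \zH_{\cdh}(X,HC))$. I would run the $cdh$-descent spectral sequence
\[
E_2^{p,q}=H^p_{\cdh}(X,\cK_{-q})\Rightarrow KH_{-p-q}(X),
\]
and its analogue for $HC$. Two constraints confine both $E_2$-pages to the box $0\le p\le d$, $q\le 0$: (i) the $cdh$-cohomological dimension of a Noetherian $k$-scheme of dimension $d$ is at most $d$; (ii) Hironaka's resolution of singularities makes every object of $\SchF$ $cdh$-locally smooth, so the sheaves $\cK_n$ and $\xc_n$ vanish for $n<0$ (regular rings have no negative $K$-theory, and cyclic homology vanishes in negative degrees by convention). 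Hence $KH_m(X)=0$ and $\zH_{\cdh}(X,HC)_m=0$ for $m<-d$. The long exact sequence defining $\cF^{HC}$ then forces $\cF^{HC}_n(X)=0$ for $n\le -d-2$, and the long exact sequence of Theorem \ref{thm:fhc} yields $K_m(X)=0$ for $m<-d$.

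\emph{$K_{-d}$-regularity.} Setting $Y=X[\underline{t}]$, the fibration sequence produces a commutative diagram with exact rows
\[
\begin{CD}
\cF^{HC}_{-d-1}(X) @>>> K_{-d}(X) @>>> KH_{-d}(X) @>>> 0 \\
@VVV @VVV @VVV \\
\cF^{HC}_{-d-1}(Y) @>>> K_{-d}(Y) @>>> KH_{-d}(Y) @>>> 0,
\end{CD}
\]
where right surjectivity follows from the previous step applied to both $X$ and $Y$. Homotopy invariance of $KH$ (Theorem \ref{thm:kh}) makes the rightmost vertical arrow an isomorphism, so by the five lemma the claim reduces to showing that the leftmost vertical arrow is an isomorphism. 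The descent spectral sequence identifies $\cF^{HC}_{-d-1}(X)$ with the single surviving term $H^d_{\cdh}(X,\cO_X)=H^d_{\zar}(X,\cO_X)$, which is manifestly invariant under polynomial extension for coherent cohomology.

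\emph{The main obstacle.} The regularity step is the crux. For $Y$ the $cdh$-cohomological dimension jumps to $d+p$, so additional K\"unneth-type contributions $H^{d+i}_{\cdh}(Y,\xc_i)$ with $i\ge 1$ may a priori enter $\cF^{HC}_{-d-1}(Y)$ beyond the coherent cohomology group $H^d_{\zar}(Y,\cO_Y)$. The technical heart of the proof is showing, via the explicit structure of cyclic homology of polynomial extensions over a $\zQ$-algebra and the compatibility of the Connes $B$-map with the Hodge decomposition on the smooth pieces supplied by resolution of singularities, that these extra contributions cancel, so that the natural map $\cF^{HC}_{-d-1}(X)\to \cF^{HC}_{-d-1}(Y)$ is indeed an isomorphism. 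The remaining steps are essentially formal descent-theoretic bookkeeping.
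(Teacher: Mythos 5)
Your restriction to $X\in\SchF$ with $\operatorname{char}k=0$ is appropriate (the conjecture for arbitrary Noetherian schemes is beyond these methods), and your vanishing argument is essentially the one the paper sketches: combine Theorem \ref{thm:fhc} with the descent spectral sequences, using resolution of singularities to get $aK_n=0$ for $n<0$ and the Suslin--Voevodsky bound $H^n_{\cdh}(X,\cS)=0$ for $n>d$. (Minor point: $aHC_n=0$ for $n<0$ needs no resolution of singularities, since $HC_n(R)=0$ for $n<0$ for every ring.) That part is fine, as is the five-lemma reduction of $K_{-d}$-regularity to the statement that $\cF^{HC}_{-d-1}(X)\to\cF^{HC}_{-d-1}(X[t_1,\dots,t_p])$ is an isomorphism.

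The gap is in your treatment of $\cF^{HC}_{-d-1}(X)$. From the fibration \eqref{seq:fibcdh} and the two spectral sequences, the only surviving terms in total degree $-d$ give
\[
\cF^{HC}_{-d-1}(X)\;\cong\;\coker\bigl(H^d_{\zar}(X,\cO_X)\to H^d_{\cdh}(X,\cO_{\cdh})\bigr),
\]
not $H^d_{\cdh}(X,\cO)$ alone, and the identity $H^d_{\cdh}(X,\cO)=H^d_{\zar}(X,\cO_X)$ that you call ``manifest'' is false in general and is in any case exactly the nontrivial point: the $\cdh$-sheafification of $\cO$ is not $\cO$ (locally in the $\cdh$ topology one has replaced $X$ by a resolution), so $H^*_{\cdh}(X,\cO_{\cdh})$ is not a coherent cohomology group and has no evident invariance under polynomial extension. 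What the proof in \cite{chsw}*{Section 6} actually establishes---and what your proposal is missing---is the \emph{surjectivity} of $H^d_{\zar}(X,\cO_X)\to H^d_{\cdh}(X,\cO_{\cdh})$ in the top degree $d$, proved by descending induction over a Hironaka hyperresolution; this is the ``more work'' the paper alludes to after Theorem \ref{thm:dim}. Your closing paragraph does not engage with this: the polynomial variables never require computing $\cdh$-cohomology of $X[\underline{t}]$ directly, since by the formulas of Section \ref{sec:reg} one has $N^j\cF^{HC}_{-d-1}(X)$ expressed through $\cF^{HH}_m(X)$ for $m\le -d-1$, all of which vanish for dimension reasons except $\cF^{HH}_{-d-1}(X)$, which is the same cokernel as above. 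So the entire regularity statement reduces to that one surjectivity, which your argument neither states nor proves.
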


The $KH$-version of the conjecture for schemes over a field of characteristic zero was settled by Haesemeyer:

\begin{theorem}\label{thm:chhdim} (Haesemeyer, \cite{chh}*{Theorem 7.1})
Let $k$ be a field of characteristic zero and let $X\in\SchF$. Then $KH_m(X)=0$ for $m<-\dim X$. 
\end{theorem}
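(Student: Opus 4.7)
The plan is to deduce the vanishing directly from Haesemeyer's descent theorem (Theorem \ref{thm:chh}) together with resolution of singularities and a cohomological dimension bound for the $\cdh$-topology. By Theorem \ref{thm:chh}, $KH(X) \weq \zH_\cdh(X,K)$ as spectra, so the problem reduces to showing that the $\cdh$-hypercohomology group $\zH^{-m}_\cdh(X,K)$ vanishes when $m < -\dim X$.

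First I would set up the descent spectral sequence for a presheaf of spectra on the $\cdh$ site,
\[
E_2^{p,q} = H^p_\cdh(X,\tilde K_q) \Longrightarrow KH_{q-p}(X),
\]
where $\tilde K_q$ denotes the $\cdh$-sheafification of the presheaf $U \mapsto K_q(U)$. The crucial claim is that $\tilde K_q = 0$ for $q < 0$. To prove this, I would invoke Hironaka's theorem \cite{hiro}: any reduced $X \in \SchF$ fits into an abstract blow-up square with a smooth total space and a closed complement of strictly smaller dimension. Iterating the construction and combining it with Nisnevich localization shows, by induction on $\dim X$, that every $k$-scheme admits a $\cdh$-hypercover by smooth schemes. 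Consequently, a $\cdh$-sheaf whose underlying presheaf vanishes on smooth schemes must itself vanish. Since smooth $k$-schemes are regular Noetherian, the Bass--Quillen theorem yields $K_q(U) = 0$ for smooth $U$ and $q < 0$, so indeed $\tilde K_q = 0$ in that range.

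Next I would combine this with the $\cdh$-cohomological dimension estimate $H^p_\cdh(X,F) = 0$ for every $\cdh$-sheaf of abelian groups $F$ and every $p > \dim X$, due to Suslin--Voevodsky (and derivable by comparing $\cdh$-cohomology with Zariski cohomology along the smooth hypercover just constructed). Both facts together confine the nonzero terms of the $E_2$-page to the region $0 \le p \le \dim X$, $q \ge 0$. A term contributes to $KH_m(X)$ only when $q - p = m$, which forces $m \ge -\dim X$; hence $KH_m(X) = 0$ for $m < -\dim X$.

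The main obstacle is making the convergence of the descent spectral sequence rigorous: one needs strong convergence, which in turn depends on the finite cohomological dimension bound and on the vanishing of $\tilde K_q$ for $q < 0$ to keep each total degree finite. Both of those ingredients rely ultimately on resolution of singularities, and it is precisely at this point that the characteristic zero hypothesis is indispensable.
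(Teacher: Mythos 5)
Your proposal is correct and follows essentially the same route as the paper's proof: Haesemeyer's descent theorem, the $\cdh$-descent spectral sequence, vanishing of the $\cdh$-sheafified negative $K$-groups via Hironaka's resolution together with the vanishing of negative $K$-theory on smooth (regular Noetherian) schemes, and the Suslin--Voevodsky cohomological dimension bound $H^p_\cdh(X,\cS)=0$ for $p>\dim X$. (A minor attribution point: the vanishing of $K_q$ for $q<0$ on regular Noetherian rings is due to Bass, and is not what is usually called the Bass--Quillen theorem.)
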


\begin{proof} By Theorem \ref{thm:chh} and \cite{chh}*{Theorem 2.8} there is a spectral sequence
\[
E_2^{p,q}=H_\cdh^p(X,aK_{-q})\Rightarrow KH_{-p-q}(X).
\]
The $E_2$-term is the cohomology of the  $\cdh$ sheaf associated to the $K$-theory groups. We have $aK_{-q}=0$ for $q>0$ by Hironaka's desingularization theorem and the fact that negative $K$-theory vanishes on smooth schemes. By a result of Suslin-Voevodsky \cite{svbk}*{Theorem 5.13}, 
if $n>d=\dim X$ then $H^n_\cdh(X,\cS)=0$ for every $\cdh$ sheaf $\cS$; thus $E^{p,q}_2=0$ if either $p>d$ or $q>0$. The theorem is immediate from this. 
\end{proof}

\begin{theorem}\label{thm:dim} (\cite{chsw}*{Theorem 6.2})
Weibel's conjecture holds for schemes essentially of finite type over a field of characteristic zero. 
\end{theorem}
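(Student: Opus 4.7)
The plan is to reduce both assertions to Haesemeyer's result by controlling the fibre of the natural map $K(X)\to KH(X)$. By Theorem~\ref{thm:fhc} this fibre is $\cF^{HC}(X)[-1]$, so taking homotopy groups yields the long exact sequence
\[
KH_{n+1}(X)\to\cF^{HC}_{n-1}(X)\to K_n(X)\to KH_n(X)\to\cF^{HC}_{n-2}(X).
\]
Since $KH_n(X)=0$ for $n<-d$ by Theorem~\ref{thm:chhdim}, the vanishing statement reduces to the dimension bound
\[
(\ast)\qquad \cF^{HC}_m(X)=0\quad\text{for every}\ m<-d,
\]
while $K_{-d}$-regularity reduces to a compatible $\zA^1$-invariance version of $(\ast)$ for $X\times\zA^p$.

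To establish $(\ast)$ I would study both sides of the defining fibration $\cF^{HC}(X)\to HC(X)\to \zH_\cdh(X,HC)$ via spectral sequences: the Zariski hypercohomology spectral sequence for $HC_*(X)$ and the cdh-descent spectral sequence
\[
E_2^{p,q}=H^p_\cdh\bigl(X,a_\cdh HC_{-q}\bigr)\Longrightarrow \pi_{-p-q}\zH_\cdh(X,HC).
\]
Three vanishing inputs drive the argument. First, Hironaka's desingularisation reduces the stalkwise computation of $a_\cdh HC_q$ to values on smooth schemes of dimension $\le d$. Second, the Hodge-type decomposition of cyclic homology of a smooth scheme $U$ of dimension $e$ (as a sum of hypercohomology groups of truncations of $\Omega^\bullet_U$) forces $HC_n(U)=0$ for $n<-e$; applied stalkwise, $a_\cdh HC_q=0$ for $q<-d$. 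Third, Suslin--Voevodsky's bound (\cite{svbk}*{Theorem 5.13}) kills $H^p_\cdh(X,-)$ for $p>d$, and an analogous Zariski estimate combined with the affine vanishing $HC_m(\Spec A)=0$ for $m<0$ forces $HC_m(X)=0$ for $m<-d$. Feeding these three facts into the long exact sequence associated to $\cF^{HC}(X)\to HC(X)\to \zH_\cdh(X,HC)$ produces $(\ast)$.

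With $(\ast)$ in hand, the long exact sequence forces $K_n(X)\iso KH_n(X)$ for $n\le -d$, hence $K_n(X)=0$ for $n<-d$. For $K_{-d}$-regularity I would compare this identification with its analogue for $X\times\zA^p$ via the square
\[
\xymatrix{K_{-d}(X)\ar[r]^{\cong}\ar[d]&KH_{-d}(X)\ar[d]^{\cong}\\ K_{-d}(X\times\zA^p)\ar[r]&KH_{-d}(X\times\zA^p)}
\]
whose right-hand vertical arrow is an isomorphism because $KH$ is homotopy invariant (\cite{kh}*{Theorem 1.2}). Regularity follows once the bottom horizontal arrow is shown to be an isomorphism. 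The main obstacle is precisely this step: since $\dim(X\times\zA^p)=d+p$, the crude bound $(\ast)$ only yields $\cF^{HC}_m(X\times\zA^p)=0$ in the much weaker range $m<-(d+p)$, which is insufficient to force the bottom arrow to be an isomorphism. I expect to close the gap by a finer analysis of the Hodge pieces contributing to $\cF^{HC}$ in the critical degrees $-d-1$ and $-d-2$, exploiting that the top-dimensional hypercohomology of the relevant de Rham truncations is itself $\zA^1$-invariant; this should yield $\cF^{HC}_{-d-1}(X)\iso\cF^{HC}_{-d-1}(X\times\zA^p)$ and the analogous statement one degree lower, closing the square and completing the regularity assertion.
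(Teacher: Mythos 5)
Your vanishing argument follows the route the paper itself indicates (Theorems \ref{thm:fhc} and \ref{thm:chhdim} plus a descent spectral sequence), but your second input is mis-calibrated in a way that matters. The sheaf $a_{\cdh}HC_q$ vanishes for \emph{all} $q<0$, simply because every scheme is Zariski-locally affine and $HC_q$ of a commutative ring vanishes in negative degrees; smoothness and dimension play no role at the level of stalks. Your weaker claim $a_{\cdh}HC_q=0$ for $q<-d$, fed into the descent spectral sequence together with the Suslin--Voevodsky bound $H^p_{\cdh}(X,-)=0$ for $p>d$, only yields $\pi_n\zH_{\cdh}(X,HC)=0$ for $n<-2d$, which is useless here. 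With the corrected input one gets $\pi_n\zH_{\cdh}(X,HC)=0$ and $HC_n(X)=0$ for $n<-d$, whence $\cF^{HC}_m(X)=0$ for $m\le -d-2$; that is exactly what $K_n(X)=0$ for $n<-d$ requires. Note, however, that this is strictly weaker than your $(\ast)$: in the critical degree $m=-d-1$ the long exact sequence leaves the cokernel of $HC_{-d}(X)\to\pi_{-d}\zH_{\cdh}(X,HC)$, i.e.\ of $H^d_{\zar}(X,\cO_X)\to H^d_{\cdh}(X,\cO)$, and its vanishing does not follow from your three inputs. That top-degree surjectivity is precisely the ``more work'' the paper alludes to.

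The regularity half is where the genuine gap sits, and your closing paragraph concedes it: ``I expect to close the gap'' is a plan, not a proof. Two ingredients are missing. First, even the isomorphism $K_{-d}(X)\cong KH_{-d}(X)$ at the top of your square already needs $\cF^{HC}_{-d-1}(X)=0$ (surjectivity is free once $\cF^{HC}_{-d-2}(X)=0$, injectivity is not), so the critical-degree surjectivity above is unavoidable. Second, the passage to $X\times\zA^p$ is not achieved in \cite{chsw} (nor in \cite{chw}, \cite{chwnk}) by an $\zA^1$-invariance property of top-degree hypercohomology of de Rham truncations; it is achieved through Kassel's K\"unneth decomposition of the $N$-operator recalled in Section \ref{sec:reg}: $N^j\cF^{HC}_n(X)$ is a sum of groups $\cF^{HH}_{n-p}(X)\otimes(\cdots)$ with $p\le j-1$, so by Proposition \ref{prop:obsreg} the $K_{-d}$-regularity of $X$ is equivalent to $\cF^{HH}_m(X)=0$ for all $m\le -d-1$ --- invariants of $X$ itself, so that $\dim(X\times\zA^p)=d+p$ never enters. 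The same dimension count as above disposes of $m\le -d-2$, and the single remaining case $m=-d-1$ reduces (only the $i=0$ Hodge summand survives in $\zH^{d}_{\cdh}(X,HH)$) to the same surjectivity $H^d_{\zar}(X,\cO_X)\onto H^d_{\cdh}(X,\cO)$. Until you supply that surjectivity, neither the critical degree of $(\ast)$ nor the regularity assertion is proved.
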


The vanishing part of Theorem \ref{thm:dim} can be proved using Theorems \ref{thm:fhc} and \ref{thm:chhdim} and a similar spectral sequence argument; the regularity part requires more work (see \cite{chsw}*{Section 6}).

\section{Singularity and the obstruction to \textup{K}-regularity}\label{sec:reg}

Let $F:\rings\to\ab$ be a functor from rings to abelian groups. If $A$ is a ring, then the inclusion $A\subset A[t]$ is split by evaluation at zero. Hence for
\[
NF(A)=\coker(F(A)\to F(A[t]))
\]
we have a direct sum decomposition 
\[
F(A[t])=F(A)\oplus NF(A).
\]
Similarly,
\[
F(A[t_1,t_2])=F(A)\oplus NF(A)\oplus NF(A)\oplus N^2F(A),
\]
where $N^2F=N(NF)$. Iterating this process one obtains an expression for $F(A[t_1,\dots,t_n])$
in terms of $F(A)$ and of the groups $N^jF(A)$ for $j\ge 1$. Hence $A$ is $F$-regular if and only if $N^jF(A)=0$ for all $j\ge 1$. 

Kassel proved (\cite{kas}*{Example 3.3}) that if $A$ is a unital associative $\zQ$-algebra, then
\begin{equation}\label{nhc}
NHC_n(A)=HH_n(A)\otimes t\zQ[t].
\end{equation}
Here $HH$ is \emph{Hochschild homology}; it is related to cyclic homology by Connes' $SBI$-sequence
\[
HC_{n-1}(A)\overset{B}\to HH_n(A)\overset{I}\to HC_n(A)\overset{S}\to HC_{n-2}(A).
\]
For example if $A$ is commutative, then $HH_0(A)=A$ and
\begin{equation}\label{hh1}
HH_1(A)=\Omega^1_A
\end{equation}
is the module of absolute \emph{K\"ahler $1$-differential forms}. 
Using \eqref{nhc}, \eqref{hh1}, and the K\"unneth formula for Hochschild homology \cite{chubu}*{Proposition 9.4.1}, we obtain
\begin{align}
N^2HC_n(A)=&NHH_n(A)\otimes t\zQ[t]\label{nhh}\\
=&HH_n(A)\otimes\zQ[t]\otimes\zQ[t]\oplus HH_{n-1}(A)\otimes\Omega^1_{\zQ[t]}\otimes\zQ[t].\nonumber
\end{align}
Iterating this process one obtains a formula for $N^jHC_n(A)$ for all $j$ in terms of $HH_{n-p}(A)$ $(p\le j)$ (see \cite{chwnk}*{Formula 1.5}). 
Like cyclic homology, Hochschild homology is defined for schemes (see \cite{whcsch}) and has all the properties stated for cyclic homology in Theorem \ref{thm:reghc} and Corollary \ref{coro:smooth} (\cite{chsw}*{Theorem 2.9}). Formulas \eqref{nhc} and \eqref{nhh} generalize to schemes. Moreover if $k$ is a field of characteristic zero and $X\in\SchF$, then for $V=t\zQ[t]$ and $dV=\Omega^1_{\zQ[t]}$, we have (\cite{chwnk}*{Lemma 3.2 and Corollary 3.3}
\begin{gather*}
N\cF_n^{HC}(X)=\cF_n^{HH}(X)\otimes V,\\
N^2\cF^{HC}_n(X)= \bigl(\cF_n^{HH}(X)\otimes V\otimes V\bigr)
\oplus \bigl(\cF_{n-1}^{HH}(X)\otimes V\otimes dV\bigr).
\end{gather*}
In view of Theorem \ref{thm:fhc} and of the fact that $KH$ is homotopy invariant, we obtain the following formulas:
\begin{align*}
NK_n(X)=&\cF_{n-1}^{HH}(X)\otimes V\\
N^2K_n(X)=&\bigl(\cF_{n-1}^{HH}(X)\otimes V\otimes V\bigr)
\oplus \bigl(\cF_{n-2}^{HH}(X)\otimes V\otimes dV\bigr). 
\end{align*}
Of course we can iterate this to produce formulas for $N^jK_n(X)$ in terms of $\cF^{HH}_{n-p}(X)$
($p\le j-1$). Summing up, we obtain the following.

\begin{proposition}\label{prop:obsreg}
Let $k$ be a field of characteristic zero, let $X\in\SchF$ and let $n\in\zZ$. Then $X$ is $K_n$-regular if and only if $\cF_{m}^{HH}(X)=0$ for all $m\le n-1$.
\end{proposition}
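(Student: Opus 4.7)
The plan is to chain together the equivalence between $K_n$-regularity and the vanishing of all iterated $N^j$-groups with the formulas for $N^jK_n(X)$ in terms of $\cF^{HH}_{n-p}(X)$ recalled in the paragraph before the statement. By the iterated direct-sum decomposition of $F(A[t_1,\dots,t_j])$ discussed at the start of the section, the $K_n$-regularity of $X$ is equivalent to $N^jK_n(X)=0$ for every $j\ge 1$; this passes from rings to schemes through Zariski descent, since both $K$-theory and the operator $N$ commute with the restriction to affine charts.

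For the easy direction, I would observe that every summand of $N^jK_n(X)$, as produced by iterating the formulas displayed just before the proposition (and generalized in \cite{chwnk}*{Formula 1.5}), has the shape $\cF^{HH}_{n-p}(X)\otimes M$ with $1\le p\le j$ and $M$ a nonzero free $\zQ$-module built out of $V=t\zQ[t]$ and $dV=\Omega^1_{\zQ[t]}$. If $\cF^{HH}_m(X)=0$ for all $m\le n-1$, then every such summand vanishes, since $n-p\le n-1$, and hence $N^jK_n(X)=0$ for each $j$.

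For the converse direction, I would argue by induction on $p\ge 1$ that $\cF^{HH}_{n-p}(X)=0$. The base case $p=1$ uses $NK_n(X)=\cF^{HH}_{n-1}(X)\otimes V$ together with the fact that $V$ is a nonzero free $\zQ$-module, so the vanishing of the tensor product forces the vanishing of the factor. For the inductive step, the vanishing of $\cF^{HH}_{n-1}(X),\dots,\cF^{HH}_{n-(p-1)}(X)$ kills every summand of $N^{p}K_n(X)$ except the distinguished one involving $\cF^{HH}_{n-p}(X)$ tensored with a nonzero free $\zQ$-module of the shape $V\otimes(dV)^{p-1}$; the hypothesis $N^{p}K_n(X)=0$ then forces $\cF^{HH}_{n-p}(X)=0$ by the same flatness argument.

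The main obstacle I anticipate is not conceptual but notational: one must write down a sufficiently precise general formula for $N^jK_n(X)$ as a direct sum indexed by $p\in\{1,\dots,j\}$ with an identifiable summand $\cF^{HH}_{n-p}(X)\otimes V\otimes(dV)^{p-1}$. The explicit pattern shown for $j=1,2$ foreshadows this, and \cite{chwnk}*{Formula 1.5} provides the general case via the Künneth decomposition for Hochschild homology over $\zQ$. Once the summands are known to decompose as direct sums rather than extend non-trivially, the proposition reduces to the elementary observation that tensoring abelian groups with a nonzero free $\zQ$-module is a faithful operation.
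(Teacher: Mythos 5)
Your proposal is correct and follows essentially the same route as the paper, whose ``proof'' is precisely the discussion preceding the statement: the reduction of $K_n$-regularity to the vanishing of all $N^jK_n(X)$, combined with the formulas $N^jK_n(X)\cong N^j\cF^{HC}_{n-1}(X)$ expressed as direct sums of groups $\cF^{HH}_{n-p}(X)$ tensored with nonzero $\zQ$-vector spaces built from $V$ and $dV$. Your explicit induction for the converse direction is a welcome elaboration of what the paper leaves implicit; just note that the faithfulness of $-\otimes V$ is used here for $\zQ$-vector spaces (which the groups $\cF^{HH}_m(X)$ are, as $k$ has characteristic zero), not for arbitrary abelian groups.
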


In view of \eqref{seq:fibcdh}, the equivalent conditions of the proposition are also equivalent to the assertion that the map
\[
HH_m(X)\to \zH^{-m}_{\cdh}(X,HH)
\]
is an isomorphism for $m\le n-1$ and a surjection for $m=n$. By \cite{chw}*{Proposition 2.1 and Theorem 2.2}, $\zH^{*}_{\cdh}(X,HH)$ decomposes as follows:
\[
\zH^{-m}_{\cdh}(X,HH)=\bigoplus_{i\ge 0}H^{i-m}_{\cdh}(X,a\Omega^i).
\]
Here $H^*_{\cdh}(X,a\Omega^i)$ is the cohomology of the $\cdh$-sheaf associated to K\"ahler $i$-differential forms. Summing up, $K$-regularity questions translate into comparing Hochschild 
homology with $\cdh$ cohomology of differential forms. This point of view has been useful for solving the following old questions about $K$-regularity.

\begin{conjecture}\label{conj:vorst} (Vorst's regularity conjecture \cite{vorst})
Let $R$ be a commutative unital ring of dimension $d$, essentially of finite type over a field $k$. If $R$ is $K_{d+1}$-regular
then $R$ is a regular ring.
\end{conjecture}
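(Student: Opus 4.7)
The plan is to combine Proposition~\ref{prop:obsreg} with the Hodge--Loday $\lambda$-decomposition of Hochschild homology of commutative $\zQ$-algebras and a careful analysis of the $cdh$-sheafified K\"ahler differentials via resolution of singularities. I will restrict to the characteristic zero case, which is the natural setting for all the tools established in the excerpt.

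First I would translate the hypothesis into Hochschild language. Writing $X=\Spec R$ and $d=\dim R$, Proposition~\ref{prop:obsreg} says that $K_{d+1}$-regularity of $R$ is equivalent to $\cF^{HH}_m(X)=0$ for every $m\le d$. Combined with the fiber sequence \eqref{seq:fibcdh} and the decomposition
\[
\zH^{-m}_{\cdh}(X,HH)=\bigoplus_{i\ge 0}H^{i-m}_{\cdh}(X,a\Omega^i),
\]
this means that for all $m\le d$ the comparison map
\[
HH_m(R)\longrightarrow \bigoplus_{i\ge 0}H^{i-m}_{\cdh}(X,a\Omega^i)
\]
is an isomorphism.

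Second, since $R$ is a commutative $\zQ$-algebra, Hochschild homology carries a natural $\lambda$-decomposition $HH_m(R)=\bigoplus_p HH_m^{(p)}(R)$, compatible with the above comparison: the summand $HH_m^{(p)}(R)$ maps into $H^{p-m}_{\cdh}(X,a\Omega^p)$. The top weight piece is $HH_m^{(m)}(R)=\Omega^m_{R/k}$, so isolating it we obtain in particular the isomorphisms
\[
\Omega^m_{R/k}\overset{\cong}{\longrightarrow} H^0_{\cdh}(X,a\Omega^m)\qquad (m\le d).
\]
Thus under the $K_{d+1}$-regularity hypothesis, the K\"ahler $m$-forms of $R$, for $m\le d$, coincide with their $\cdh$-sheafifications on $X$.

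Third, I would show that this coincidence forces $R$ to be regular. Pick a resolution of singularities $\pi:\tilde X\to X$ (Hironaka), let $Z\subset X$ be the singular locus and $E=\pi^{-1}(Z)_{\red}$ the exceptional divisor. Using the Mayer--Vietoris property for abstract blow-ups at the level of $cdh$-sheafified differentials, one identifies $a\Omega^d(X)$ with an equalizer built from $\pi_*\Omega^d_{\tilde X}$, $\Omega^d_Z$ and $\Omega^d_E$. If $R$ were nonregular, one would exhibit a nonzero class in $\Omega^d_{R/k}$, supported on $Z$ and arising from a local ring of embedding dimension strictly greater than its Krull dimension, that dies in this equalizer; this would contradict the injectivity of $\Omega^d_{R/k}\to a\Omega^d(X)$ established above. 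A Noetherian induction on $\dim Z$ then reduces the local calculation to the case of an isolated singular point.

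The main obstacle is the third step. It rests on two nontrivial ingredients: Hironaka's resolution of singularities, and an explicit local computation showing that at a singular point the natural map from top K\"ahler forms to their $\cdh$-sheafification acquires nontrivial kernel (or cokernel). Packaging these into a clean statement of the form ``$\Omega^d_{R/k}\cong a\Omega^d(X)$ implies $R$ regular'' --- and arranging the induction on the dimension of the singular locus so that one really can detect singularities in the top degree --- is the technical heart of the argument, and is where the full strength of the $cdh$-descent machinery of Haesemeyer's theorem is put to use.
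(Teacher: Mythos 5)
You should first note that the statement you set out to prove is presented in the paper as a \emph{conjecture}: it is open for a general field $k$, and the paper contains no proof of it. What is known is the characteristic-zero case (Theorem \ref{thm:chwvorst}, from \cite{chw}) and the case of an infinite perfect field of characteristic $p>0$ admitting strong resolution of singularities (Theorem \ref{thm:ghvorst}). Since you restrict at the outset to characteristic zero, your argument can at best recover Theorem \ref{thm:chwvorst}, not the conjecture as stated; in positive characteristic the entire apparatus you invoke (the Chern character to $HN$, the Goodwillie and Cuntz--Quillen theorems, hence Proposition \ref{prop:obsreg} itself) is unavailable and must be replaced by the cyclotomic trace to topological cyclic homology, which is exactly why the general case remains open.

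Within characteristic zero, your first two steps do follow the route of \cite{chw} that the paper sketches after Proposition \ref{prop:obsreg}: translate $K_{d+1}$-regularity into $\cF^{HH}_m(X)=0$ for $m\le d$, decompose by Hodge weight (the compatibility of the $\lambda$-decomposition with the $\cdh$-decomposition is \cite{chw}*{Proposition 2.1 and Theorem 2.2} and is not free, but it is available), and reduce to comparing $\Omega^m_R$ with $H^{0}_{\cdh}(X,a\Omega^m)$. The genuine gap is your third step, which is not a reduction but the theorem itself. Your sketch asserts, without proof, the two facts that carry all the weight in \cite{chw}: (i) an injectivity/torsion-freeness statement for $H^0_{\cdh}(X,a\Omega^{\bu})$, coming from the fact that its sections inject into forms on a resolution; and (ii) a local computation showing that a non-regular local ring $R_{\fp}$ of embedding dimension strictly larger than its dimension produces classes in $\Omega^{\bu}$ that cannot survive into $a\Omega^{\bu}$, in a weight governed by the \emph{codimension} $c$ of $\fp$ rather than by the global dimension $d$. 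The correct intermediate statement, as in Theorem \ref{thm:chwvorst}, is that $K_n$-regularity forces regularity in codimension $<n$; trying to detect every singularity in the single top weight $d$ via one equalizer for one blow-up square will not localize correctly at primes of small codimension, and the ``nonzero class supported on $Z$ that dies in the equalizer'' is precisely what needs to be constructed. Until (i) and (ii) are supplied the argument does not close even in characteristic zero, and no argument of this kind can close the conjecture over an arbitrary field.
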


For $d=0$, the conjecture is trivial; the case $d=1$ was proved by Vorst \cite{vorst}*{Theorem 3.6}. Recall that a commutative unital ring $R$ is \emph{regular in codimension $d$} if the local ring $R_\fp$ is regular for each prime $\fp$ of codimension $d$. A Noetherian ring of dimension $d$ is regular if and only if it is regular in codimension $d$. 

\begin{theorem}\label{thm:chwvorst}(\cite{chw}*{Theorem 0.1})
Let $R$ be a commutative unital ring which is essentially of finite type over a field $k$ of characteristic zero. Assume that $R$ is $K_n$-regular. Then $R$ is regular in codimension $<n$. In particular, if $R$ is $K_{\dim R+1}$-regular, then it is regular.
\end{theorem}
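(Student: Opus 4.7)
The plan is to translate $K_n$-regularity into a Hochschild-theoretic comparison via Proposition \ref{prop:obsreg}, localize at a hypothetically non-regular prime of codimension $< n$, and extract an obstruction class from top-degree K\"ahler differentials on a desingularization. The ``in particular'' clause is immediate: if $R$ is $K_{\dim R+1}$-regular, then it is regular in every codimension $< \dim R + 1$, which is all codimensions, hence regular.

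By Proposition \ref{prop:obsreg}, the hypothesis that $R$ is $K_n$-regular is equivalent to $\cF^{HH}_m(\Spec R) = 0$ for all $m \le n-1$; equivalently, the comparison
\[
HH_m(R) \lra \zH^{-m}_\cdh(\Spec R, HH) = \bigoplus_{i \ge 0} H^{i-m}_\cdh(\Spec R, a\Omega^i)
\]
is an isomorphism in these degrees. Both sides behave well under localization on $\SchF$ (Hochschild homology by flat base change, and $\cdh$-cohomology of $a\Omega^i$ by compatibility of the $\cdh$-topology with essentially finite type localization), so the vanishing descends to stalks: $\cF^{HH}_m(R_\fp) = 0$ for every prime $\fp$ and every $m \le n-1$. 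Thus it suffices to prove the local version: if $(A,\frakm)$ is a local essentially of finite type $k$-algebra of dimension $d \le n-1$ with $\cF^{HH}_m(A) = 0$ for $m \le n-1$, then $A$ is regular.

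Arguing by contraposition, I would assume $A$ is non-regular of dimension $d \le n-1$ and build a nonzero class in $\cF^{HH}_d(A)$. The relevant summand is the top-degree piece $(a\Omega^d)(A) = H^0_\cdh(\Spec A, a\Omega^d)$ of $\zH^{-d}_\cdh(\Spec A, HH)$. The image of $HH_d(A)$ inside this group is controlled by the antisymmetrization $\Omega^d_{A/k} \to HH_d(A)$. On the other hand, choosing a resolution of singularities $f : X \to \Spec A$ via Hironaka \cite{hiro}, one obtains a map $\Gamma(X, \Omega^d_{X/k}) \to (a\Omega^d)(A)$ whose image is strictly larger than that of $\Omega^d_{A/k}$ when $A$ is non-regular, because top differential forms on the resolution extend along exceptional divisors that are invisible on $\Spec A$. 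The goal is to exhibit such a form explicitly and verify that it descends to a non-zero class in $\cF^{HH}_d(A)$ which is not cancelled against contributions from the summands $H^{i-d}_\cdh(\Spec A, a\Omega^i)$ with $i > d$.

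The main obstacle is exactly this last construction: controlling the $\cdh$-sheafified de Rham complex $\{a\Omega^i\}$ in a neighbourhood of a singular point and comparing it with the K\"ahler complex of $A$ itself. Carrying this out requires combining Hironaka's theorem with the Mayer--Vietoris property of Theorem \ref{thm:reghc} for regular blow-ups to build a hyperresolution of $\Spec A$, then computing $(a\Omega^d)(A)$ explicitly from top forms on the resolution and its exceptional divisor. The Jacobian criterion supplies the conceptual link: $A$ is regular precisely when $\Omega^1_{A/k}$ is locally free of rank equal to its embedding dimension, and failure of freeness is exactly what produces the extra top forms on any desingularization. Making this quantitative---producing an honest non-zero class that survives the spectral sequence cancellations from the other $a\Omega^i$ summands---is the technical heart of the argument.
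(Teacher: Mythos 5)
The survey contains no proof of this theorem: it states it with a citation to \cite{chw}, and the only ``proof'' on offer is the paragraph following Proposition \ref{prop:obsreg}, which describes the method --- translate $K_n$-regularity into the vanishing of $\cF^{HH}_m$ for $m\le n-1$, hence into the comparison of $HH_m(R)$ with $\bigoplus_i H^{i-m}_{\cdh}(\Spec R,a\Omega^i)$. Your reduction reproduces exactly that translation, your handling of the ``in particular'' clause is correct, and the passage to local rings is sound (though the commutation of $\cF^{HH}$ with localization deserves a word of justification: it follows from continuity of $HH$ and of $\zH_{\cdh}(-,HH)$ under filtered colimits of rings, or alternatively from Vorst's result that $NK_n$ commutes with localization, so that $K_n$-regularity itself passes to $R_\fp$).

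The genuine gap is the step you yourself flag as the ``technical heart'': showing that a non-regular local ring $A$ of dimension $d$ has $\cF^{HH}_m(A)\neq 0$ for some $m\le d$. Everything preceding it is formal bookkeeping; this step is the actual content of \cite{chw} and occupies most of that paper. The heuristic you offer --- that the image of $\Gamma(X,\Omega^d_{X/k})$ in $H^0_{\cdh}(\Spec A,a\Omega^d)$ for a resolution $X\to\Spec A$ is strictly larger than the image of $\Omega^d_{A/k}$ --- is plausible, and does check out in examples (for the $A_1$-singularity $xy=z^2$ the form $dx\wedge dy/z$ is regular on the blow-up but does not come from $\Omega^2_A$), but it is an assertion, not an argument. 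To make it one, you would need at least: the reductions (seminormality, normality, isolated singularity, obtained by induction on dimension from the lower-degree vanishing) that let a single abstract blow-up square identify $H^0_{\cdh}(\Spec A,a\Omega^d)$ with $\Gamma(X,\Omega^d_X)$; an analysis of whether the comparison fails by non-surjectivity or instead by non-injectivity (torsion in $\Omega^d_A$ dies in the torsion-free module $\Gamma(X,\Omega^d_X)$, and either failure must be converted into a nonzero class in $\cF^{HH}_m$ for some $m\le n-1$); and the compatibility of the map $HH_m\to\bigoplus_i H^{i-m}_{\cdh}(-,a\Omega^i)$ with the Hodge decomposition of Hochschild homology, so that the class you produce in the $i=m=d$ summand cannot be cancelled against the summands with $i\neq m$. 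None of this is carried out, so the proposal is a correct restatement of the strategy the survey sketches, with the decisive lemma left unproved.
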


\begin{question}(Bass' question \cite{bass}*{Question $(VI)_n$})

Does $NK_n(R)=0$ imply $N^2K_n(R)=0$?
\end{question}

\begin{theorem}\label{thm:bass}(\cite{chwnk}*{Corollary 6.7},\cite{chwwq}*{Theorem 4.1})

\smallskip

a) For any field $F$ algebraic over $\zQ$, the 2-dimensional normal algebra
$$R=F[x,y,z]/(z^2+y^3+x^{10}+x^7y)$$ 
has $NK_0(R) = 0$ but $N^2K_0(R) \neq 0.$

b) Suppose $R$ is essentially of finite type over a field 
of infinite transcendence degree over $\zQ$. Then $NK_n(R) = 0$ implies
that $R$ is $K_n$-regular.
\end{theorem}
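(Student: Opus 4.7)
The plan is to translate the two parts of the theorem into assertions about the obstruction groups $\cF^{HH}_m(R)$, using Proposition \ref{prop:obsreg} and the explicit identifications
\[
NK_n(R)\cong\cF^{HH}_{n-1}(R)\otimes V,\qquad N^2K_n(R)\cong\bigl(\cF^{HH}_{n-1}(R)\otimes V\otimes V\bigr)\oplus\bigl(\cF^{HH}_{n-2}(R)\otimes V\otimes dV\bigr)
\]
established just above that proposition, in which $V=t\zQ[t]$ and $dV=\Omega^1_{\zQ[t]}$ are free $\zQ$-modules. Consequently, part (a) with $n=0$ reduces to the twin statements $\cF^{HH}_{-1}(R)=0$ and $\cF^{HH}_{-2}(R)\neq 0$ for the given hypersurface, while part (b) reduces to the implication: if $\cF^{HH}_{n-1}(R)=0$ then $\cF^{HH}_m(R)=0$ for every $m<n-1$.

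For part (a), I would evaluate each $\cF^{HH}_m(R)$ via the defining fibration $\cF^{HH}(R)\to HH(R)\to \zH_{\cdh}(R,HH)$ together with the Hodge decomposition $\zH^{-m}_{\cdh}(R,HH)=\bigoplus_{i\ge 0} H^{i-m}_{\cdh}(R,a\Omega^i)$. Since negative Hochschild homology vanishes, for $m<0$ the group $\cF^{HH}_m(R)$ is entirely governed by $\cdh$-cohomology of the K\"ahler form sheaves. I would then choose an explicit resolution of the isolated surface singularity $z^2+y^3+x^{10}+x^7y=0$ (iterated blow-ups producing a tree of smooth rational components) and run the associated abstract-blow-up spectral sequence to convert the required $\cdh$-cohomology into Zariski cohomology of differential forms on the smooth components and their incidences. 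Normality of $R$ (easily checked from the equation) forces the cancellation yielding $\cF^{HH}_{-1}(R)=0$, while the specific arithmetic of the exponents $(2,3,10,7)$ leaves a surviving class in $\cF^{HH}_{-2}(R)$. The main obstacle here is the combinatorial bookkeeping on the resolution diagram.

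For part (b), I would argue by contradiction. Choose a subfield $k_0\subset k$ of finite transcendence degree over $\zQ$ such that $R$ is essentially of finite type over $k_0$. Suppose $\cF^{HH}_m(R)\neq 0$ for some $m<n-1$ and set $r=n-1-m>0$. By the infinite transcendence degree hypothesis one can pick $s_1,\dots,s_r\in k$ algebraically independent over $k_0$. Then $ds_1,\dots,ds_r\in\Omega^1_{k/\zQ}\subset HH_1(R)$ are $\zQ$-linearly independent, and the shuffle product on Hochschild homology, which is compatible with the $\cdh$-obstruction theory, induces a map
\[
\cF^{HH}_m(R)\otimes_\zQ\Lambda^r_\zQ\bigl(ds_1,\dots,ds_r\bigr)\longrightarrow \cF^{HH}_{n-1}(R).
\]
Using the base-change / K\"unneth spectral sequence for Hochschild homology together with the smoothness of $\zQ[s_1,\dots,s_r]\hookrightarrow k\hookrightarrow R$ and the HKR theorem, this map is injective; its source is nonzero by hypothesis, so $\cF^{HH}_{n-1}(R)\neq 0$, contradicting $NK_n(R)=0$. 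The main technical obstacle is precisely the injectivity of this wedging map; this is exactly where the infinite transcendence degree is used in an essential way, and it is exactly what fails for the ring in part (a), whose coefficient field $F$ is algebraic over $\zQ$ and hence has no transcendentals to wedge with.
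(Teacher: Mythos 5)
Your reduction of both parts to statements about $\cF^{HH}_*$, via Proposition \ref{prop:obsreg} and the identities $NK_n=\cF^{HH}_{n-1}\otimes V$ and $N^2K_n=(\cF^{HH}_{n-1}\otimes V\otimes V)\oplus(\cF^{HH}_{n-2}\otimes V\otimes dV)$, is exactly the method the paper indicates (the paragraph following Proposition \ref{prop:obsreg}; the actual proofs live in \cite{chwnk} and \cite{chwwq}). But two of your steps do not hold up as written. In part (a), the assertion that normality of $R$ ``forces'' $\cF^{HH}_{-1}(R)=0$ is not the actual mechanism. From the fibration \eqref{seq:fibcdh} and the decomposition $\zH^{0}_{\cdh}(X,HH)=\bigoplus_{i}H^{i}_{\cdh}(X,a\Omega^i)$, the vanishing of $\cF^{HH}_{-1}(R)$ requires, beyond surjectivity of $R\to H^0_{\cdh}(X,a\cO)$, the vanishing of $H^1_{\cdh}(X,a\Omega^1)$ and $H^2_{\cdh}(X,a\Omega^2)$; neither follows from normality, and both must be computed on a resolution. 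It is precisely here that the hypothesis that $F$ is algebraic over $\zQ$ (hence $\Omega^1_{F/\zQ}=0$) enters, while the nonvanishing of $H^1(X',\cO)$ for this non-rational singularity is what yields $\cF^{HH}_{-2}(R)\neq 0$, i.e. $NK_{-1}(R)\neq 0$. Your outline defers this entire computation, which is the whole content of part (a).

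In part (b), the claimed injectivity of multiplication by $\omega=ds_1\wedge\cdots\wedge ds_r$ on all of $\cF^{HH}_m(R)$ is false: under the K\"unneth/base-change decomposition $\cF^{HH}_*(R)\cong\bigoplus_{j}\cF^{HH}_{*-j}(R_0)\otimes_{k_0}\Omega^j_{k/k_0}$, multiplication by $\omega$ annihilates every class that already involves one of the $ds_i$, and such classes exist whenever some $\cF^{HH}_{m-j}(R_0)\neq 0$ with $j>0$ --- which you cannot exclude. The argument is repaired by first descending a given nonzero class of $\cF^{HH}_m(R)$ to $R_1=R_0\otimes_{k_0}k_1$ for a finitely generated subfield $k_1$ (using that $\cF^{HH}$ commutes with filtered colimits), and only then choosing $s_1,\dots,s_r$ algebraically independent over $k_1$; multiplication by $\omega$ is injective on the image of $\cF^{HH}_m(R_1)$, which is all one needs to contradict $\cF^{HH}_{n-1}(R)=0$. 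With that correction your route for (b) coincides with the argument of \cite{chwnk}; as stated, the injectivity step would fail.
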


The proof in \emph{loc.cit.} of the theorem above employs the method described in the paragraph after Proposition \ref{prop:obsreg}. Gubeladze gave a different proof of part b) in \cite{gubelb}*{Theorem 1}. 

The next conjecture concerns abelian monoids; we shall use multiplicative notation. An abelian monoid is called \emph{cancellative} if $ac=bc$ implies $a=b$, and \emph{torsion-free} if $a^n=b^n$ with $n\in\zZ_{\ge 1}$ implies $a=b$. An element $u\in M$ is called a \emph{unit} if it has an inverse in $M$. If $k$ is a commutative ring and $M$ a commutative monoid,
then the \emph{monoid $k$-algebra} $k[M]$ is the set of finitely supported functions $M\to k$ equipped with pointwise addition and convolution product. Any element of $k[M]$ can be written uniquely as a finite $k$-linear combination $\sum_a\lambda_a\chi_a$ where $\chi_a$ is the characteristic function $\chi_a(b)=\delta_{a,b}$. Each integer $c\ge 2$ defines a $k$-algebra homomorphism
\[
\theta_c:k[M]\to k[M],\qquad \theta_c(\chi_a)=\chi_{a^c}.
\]
The map $\theta_c$ is called the \emph{dilation} of ratio $c$. If $F:\ring\to\ab$ is a functor and $\fc=(c_1,c_2,\dots)$ is a sequence of integers $c_i\ge 2$, then we have an inductive system $\{\theta_{c_n}:F(k[M])\to F(k[M]):n\ge 1\}$. We write
\[
F(k[M])^{\fc}=\colim_{\theta_{c_n}} F(k[M])
\]
for its colimit. 

\begin{conjecture}\label{conj:gubel}(Gubeladze's nilpotence conjecture, \cite{gubelc}*{Conjecture 2.1})
Let $k$ be a commutative ring, $M$ an abelian monoid and $\fc=(c_1,c_2,\dots)$ a sequence of integers $c_i\ge 2$. Assume that $k$ is regular 
Noetherian and that $M$ is cancellative and torsion-free and has no non-trivial units. Then 
\[
K_*(k[M])^{\fc}=K_*(k).
\]
\end{conjecture}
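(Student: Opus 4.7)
The plan is to combine Theorem \ref{thm:fhc} with an analysis of how the dilations $\theta_c$ act on $KH$-theory and on the cyclic homology obstruction $\cF^{HC}$. Since both sides of the conjectured equality commute with filtered colimits of monoids, and every finitely generated submonoid of $M$ inherits the hypotheses (cancellative, torsion-free, no nontrivial units), I may reduce at once to the case where $M$ is finitely generated, so that $\Spec k[M]$ is a Noetherian affine $k$-scheme to which the machinery of Section \ref{sec:chh} applies.

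Applying Theorem \ref{thm:fhc} to $k[M]$ and passing to the $\fc$-colimit, which preserves long exact sequences of spectra, reduces the conjecture to the two separate statements $KH_*(k[M])^{\fc}=K_*(k)$ and $\cF^{HC}_*(k[M])^{\fc}=0$. For the first, regularity of $k$ yields $KH_*(k)=K_*(k)$, and the remaining equality $KH_*(k[M])^{\fc}=KH_*(k)$ follows from Gubeladze's earlier work on $K$-theory of monoid algebras combined with homotopy invariance of $KH$ and Theorem \ref{thm:chh}: the colimit ring identifies with the monoid algebra $k[M_{\fc}]$ of the monoid $M_{\fc}$ obtained from $M$ by formally adjoining $c_1\cdots c_n$-th roots of every element, which by toric resolution at each finite stage is a filtered colimit of smooth $k$-algebras, so its $KH$-theory agrees with that of $k$.

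The main obstacle is the vanishing $\cF^{HC}_*(k[M])^{\fc}=0$. My strategy would be to exploit the natural $M$-grading of $k[M]$: it induces $M$-gradings on $HH_*(k[M])$, $HC_*(k[M])$, and their $\cdh$-sheafified counterparts, and hence on $\cF^{HC}_*(k[M])$, with $\theta_c$ sending the weight-$m$ summand into the weight-$m^c$ summand. Using the formulas in the paragraph immediately following Proposition \ref{prop:obsreg} to express $\cF^{HC}_*$ in terms of $\cdh$-cohomology of K\"ahler differential forms, one identifies the $\fc$-colimit with $\cF^{HC}_*(k[M_{\fc}])$, and then the same toric-resolution / smoothness argument used for the $KH$-part yields the desired vanishing via Corollary \ref{coro:smooth}, together with the observation that the weight-identity summand is $\cF^{HC}_*(k)=0$ by regularity of $k$.

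The technical heart of the proof, and what I expect to be the hardest step, is establishing suitable continuity of $\cF^{HC}$, in particular of the $\cdh$-cohomology of sheaves of differential forms, under the relevant filtered colimits of non-Noetherian rings, so that vanishing at each finite stage propagates correctly to the limit. This is where one must leverage not only the formal properties of $\cdh$-descent but also the explicit decomposition $\zH^{-m}_{\cdh}(X,HH)=\bigoplus_{i\ge 0}H^{i-m}_{\cdh}(X,a\Omega^i)$ and the fact that $\theta_c$ acts on the form $\chi_a^{c-1}d\chi_a$-type generators in a way that, when combined with torsion-free cancellativity of $M$, produces genuine smoothing in the divisible limit.
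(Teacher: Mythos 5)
The statement you are trying to prove is labelled a \emph{conjecture} in the paper, and it is not proved there in the stated generality: the paper only surveys the known cases, namely $k$ containing a field of characteristic zero (Theorem \ref{thm:gubel0}) and $k$ containing a field of characteristic $p>0$ (Theorem \ref{thm:gubelp}); for a general regular Noetherian $k$ (for instance $k=\zZ$) the conjecture remains open. Your argument is built on Theorem \ref{thm:fhc}, which applies only to schemes essentially of finite type over a field of characteristic zero, so even if every step worked it could at best recover the characteristic-zero case, which is essentially the route of \cite{chwwt}*{Corollary 6.10}.

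Within that case there is a genuine gap at the crucial step, the vanishing of $\cF^{HC}_*(k[M])^{\fc}$. You assert that the colimit ring $k[M_{\fc}]$ ``by toric resolution at each finite stage is a filtered colimit of smooth $k$-algebras.'' This is false: the colimit along the dilations $\theta_{c_n}$ is a filtered colimit of copies of $k[M]$ itself, and each finite stage is exactly as singular as $k[M]$ (adjoining roots of the monoid elements does not resolve, say, a simplicial cone singularity). The entire content of the nilpotence conjecture is that the colimit nevertheless behaves as if smooth, and the actual proofs (Gubeladze's via Theorem \ref{thm:kabi}, or the $cdh$-descent proofs using Theorem \ref{thm:chhg} or Theorem \ref{thm:chhtoric}) must show directly that the dilations act on the weight-graded pieces of the obstruction groups --- concretely, on the groups $H^*_{\cdh}(X,a\Omega^i)$ via multiplication by powers of $c$ on forms of positive weight --- in a way that kills them in the colimit. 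You gesture at this grading in your final paragraph, but you do not supply that argument, and you cannot substitute the smoothing claim for it. The $KH$-half of your reduction is unproblematic (indeed $KH_*(k[M])=KH_*(k)$ already before passing to the colimit, since $k[M]\to k$ is a polynomial homotopy equivalence), but the $\cF^{HC}$-half is where the theorem lives, and it is not established here.
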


\begin{remark} 
Under the conditions of the conjecture, the map $k[M]\to k$ is a polynomial homotopy equivalence (see the proof of \cite{chwwp}*{Corollary 8.4}). Thus Gubeladze's conjecture is a homotopy invariance statement.
\end{remark}

\begin{theorem}\label{thm:gubel0}(Gubeladze, \cite{gubelinv}*{Theorem 1.2},\cite{gubelcoeff}*{Theorem 1})
Conjecture \ref{conj:gubel} holds if $k$ contains a field of characteristic zero.
\end{theorem}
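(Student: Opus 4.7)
The plan is to combine Theorem \ref{thm:fhc} with the homotopy invariance of $KH$ to translate the conjecture into a vanishing statement about the colimit of a reduced cyclic-homology group, and then to analyze that colimit using the Hodge decomposition together with the monoid grading on $k[M]$.

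First I would invoke the Remark preceding the theorem: under the hypotheses on $M$, the augmentation $\varepsilon\colon k[M]\to k$ is a polynomial homotopy equivalence. Since $KH$ is homotopy invariant (Theorem \ref{thm:kh}) and $k$ is regular Noetherian, this yields $KH_*(k[M])\cong KH_*(k)\cong K_*(k)$, with every dilation $\theta_c$ acting as the identity on the common target. Splitting by $\varepsilon$ and writing $\tilde E_*$ for the kernel of $E_*(k[M])\to E_*(k)$, we obtain $\tilde{KH}_*(k[M])=0$, and then Theorem \ref{thm:fhc} gives a dilation-equivariant isomorphism
\[
\tilde K_n(k[M])\cong \tilde{\cF}^{HC}_{n-1}(k[M]).
\]
The conjecture is therefore equivalent to showing that $\colim_{\theta_{c_n}}\tilde{\cF}^{HC}_*(k[M])=0$.

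Next I would exploit the Hodge decomposition available in characteristic zero: $HC_*$, $HH_*$, and their $\cdh$-sheafified versions all split into weight pieces that are preserved by every ring map, and hence by each $\theta_c$. The monoid $M$ provides a further grading on the entire diagram \eqref{diag:fk}, and $\theta_c$ multiplies this $M$-grading by $c$; since $M$ has no nontrivial units, $\tilde{\cF}^{HC}$ is concentrated in strictly positive $M$-degrees, and those degrees escape to infinity along the dilation system. Using Hironaka's theorem to express $H^*_{\cdh}(X,a\Omega^i)$ for $X=\Spec k[M]$ via a hyperresolution by smooth toric varieties, on which $\theta_c$ lifts and acts on K\"ahler forms by standard pullback, one reduces the vanishing to a combinatorial assertion about how the dilation interacts with the hyperresolution.

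The hard part will be this combinatorial vanishing: one must track simultaneously the factor of $c^i$ produced on K\"ahler $i$-forms under pullback by $\theta_c$, the action of the dilations on the strata of a toric resolution of $\Spec k[M]$, and the escape to infinity in $M$-degree along the colimit. The final passage from $\zQ$-coefficients to an arbitrary field $k$ of characteristic zero (the content of \cite{gubelcoeff}) should then follow by faithfully flat descent along $\zQ\hookrightarrow k$, since Theorem \ref{thm:fhc}, the Hodge decomposition, and the sheaf-theoretic ingredients all behave well under flat base change.
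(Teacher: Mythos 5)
Your proposal does not follow the proof to which the survey attributes this theorem. Gubeladze's own argument (\cite{gubelinv}, \cite{gubelcoeff}) goes through the excision theorem for rational $K$-theory, Theorem \ref{thm:kabi}, combined with his earlier resolution of special cases of the conjecture; it does not use $cdh$-descent. What you describe is essentially the \emph{alternative} proof of \cite{chwwt}*{Corollary 6.10} mentioned in the paragraph after the theorem: reduce via Theorem \ref{thm:fhc} and the homotopy invariance of $KH$ to the vanishing of $\colim_{\theta_{c_n}}\tilde{\cF}^{HC}_*(k[M])$, and attack that using the Hodge decomposition, the $M$-grading, and the $cdh$-cohomology of differential forms on toric resolutions. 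That route is legitimate and fits uniformly with the Vorst and Bass results of Section \ref{sec:reg}, but it was carried out only for $k$ a field (and, strictly, one must first reduce to finitely generated $M$ so that $k[M]$ lies in $\SchF$).

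Two steps as written would fail. First, the colimit does not vanish merely because the $M$-degrees supporting $\tilde{\cF}^{HC}$ ``escape to infinity'' under dilation: a colimit of graded groups along degree-multiplying maps need not be zero (consider $t\zQ[t]\xrightarrow{t\mapsto t^2}t\zQ[t]\to\cdots$). The actual mechanism in \cite{chwwt} is a dilation lemma controlling how $\theta_c$ acts on $H^*_{\cdh}(-,a\Omega^i)$ and on the Hochschild side; you rightly flag this as the hard part, but the heuristic you offer is not the reason the colimit dies. Second, and more seriously, the extension from $k$ a field to $k$ an arbitrary regular Noetherian ring containing $\zQ$ --- which is part of the statement --- cannot be obtained by ``faithfully flat descent along $\zQ\hookrightarrow k$.'' Algebraic $K$-theory does not satisfy faithfully flat descent, and Theorems \ref{thm:chhg} and \ref{thm:fhc} are only available for schemes essentially of finite type over a field, so the $cdh$ machinery is simply unavailable for $k[M]$ when $k$ is a general regular coefficient ring. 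That extension is precisely the content of \cite{gubelcoeff} (and, by other methods, of \cite{chwwp}); it requires a separate argument, not base change.
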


Gubeladze's proof uses relies on Theorem \ref{thm:kabi} and on his previous work on special cases of the conjecture. A different
proof, using Theorem \ref{thm:chhg}, was given in \cite{chwwt}*{Corollary 6.10} for the particular case when $k$ is a field. The general case when $k$ is a regular ring containing a field (of any characteristic) was proved in \cite{chwwp}*{Theorem 0.2}; this is discussed in Section \ref{sec:p}.

\section{A glimpse at characteristic \topdf{$p>0$}{p}.}\label{sec:p}

We have seen in Section \ref{sec:obse} that the obstructions to nilinvariance and excision with rational coefficients are computed by the Chern character to negative cyclic homology. To compute these obstructions in the $p$-adically complete case for a prime $p$, one has to replace the Chern character by the \emph{cyclotomic trace} of B\"okstedt-Hsiang-Madsen \cite{bhm}
\[
\tr:K(R)\to TC(R),
\]
 which takes values in \emph{topological cyclic homology}. 
 
\begin{theorem}\label{thm:mc} (McCarthy, \cite{macnil}*{Main Theorem})
Let $R$ be a unital ring, $I\triqui R$ a nilpotent ideal and $p>0$ a prime. Then $\tr:K(R:I)\to TC(R:I)$ becomes a weak equivalence after $p$-completion.
\end{theorem}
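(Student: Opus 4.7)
\emph{Proof plan for Theorem \ref{thm:mc}.}
The strategy is to reduce to the square-zero case and then invoke a Goodwillie calculus argument comparing the first Taylor derivatives of $K$ and $TC$.

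\emph{Reduction to square zero.} Since $I$ is nilpotent, pick $N$ with $I^N=0$ and consider the finite tower of ideals $I\supset I^2\supset\cdots\supset I^N=0$. For each $r$, the surjection $R/I^{r+1}\to R/I^r$ with kernel $I^r/I^{r+1}$ fits into compatible homotopy fibration sequences
\[
K(R/I^{r+1}:I^r/I^{r+1})\to K(R:I)\to K(R/I^r:I/I^r),
\]
and likewise for $TC$, with the cyclotomic trace mapping between them. By descending induction on $r$ it suffices to prove that $\tr$ is a $p$-equivalence on the relative theories $K(R':J)\to TC(R':J)$ whenever $J^2=0$. Replacing $R$ by the split extension $R/J\ltimes J$ is not literally possible, but the standard trick of replacing $R\to R/J$ by the associated cosimplicial diagram of split square-zero extensions (or equivalently working in the category of simplicial rings and taking the associated bar resolution) allows one to reduce further to the case where $R=S\ltimes M$ is a split square-zero extension of a unital ring $S$ by an $S$-bimodule $M$, and the relevant ideal is $M$.

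\emph{Goodwillie tower analysis.} For fixed $S$, consider the functors $F(M)=K(S\ltimes M:M)$ and $G(M)=TC(S\ltimes M:M)$ from $S$-bimodules to spectra, both of which are reduced (vanish at $M=0$). The cyclotomic trace gives a natural transformation $F\to G$. Each functor admits a Goodwillie tower $\{P_nF\}$, $\{P_nG\}$, and the plan is to show (a) the trace induces an equivalence on first derivatives after $p$-completion, and (b) after $p$-completion both $F$ and $G$ agree with their linearizations. For (a) one invokes the Dundas--McCarthy identification of the linearization (stable $K$-theory) of $F$ with $THH(S;M)$, and the analogous identification of $P_1G$ with a version of $TC$ built from $THH(S;M)$; the trace realizes the canonical map, which is an equivalence of spectra even integrally, because relative $TC$ for a square-zero extension already agrees with the linear piece on the nose due to the simple form of the cyclotomic structure on $THH(S\ltimes M)$ modulo $M^2$.

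\emph{Vanishing of higher cross-effects after $p$-completion.} The main obstacle is step (b): showing that the higher terms in the Taylor tower of $F$ (and, a fortiori, of $G$) vanish after $p$-completion for square-zero $M$. The $n$-th cross-effect of $F$ is built from $K$-theory of parametrized endomorphisms involving $n$ tensor copies of $M$, which carry natural actions of the symmetric group $\Sigma_n$. McCarthy's key observation is that on the cyclotomic-trace side these cross-effects receive compatible $\Sigma_n$-actions whose Tate constructions vanish $p$-adically, and a spectral sequence comparison forces the relative $K$-theory cross-effects to be $p$-adically trivial as well. Concretely I would run the Bousfield--Kan spectral sequence of the Taylor tower, identify the $E_2$ page of both $F$ and $G$ with computable THH-type terms via Dundas--McCarthy, and check that the trace induces an isomorphism on $E_2$ after $p$-completion while both spectral sequences converge. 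The convergence of the Taylor tower (for $M$ connective, or after a suitable reduction allowing this) together with step (a) then gives the desired $p$-adic equivalence $\tr: F(M)\xrightarrow{\sim} G(M)$, closing the induction.
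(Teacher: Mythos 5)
The paper offers no proof of this statement: it is quoted directly from McCarthy \cite{macnil}, so your proposal can only be measured against McCarthy's original argument. Your skeleton matches that argument in outline: d\'evissage along the powers of $I$ to reduce to square-zero ideals, a simplicial resolution to reduce further to split square-zero extensions $S\ltimes M$, functor calculus in the bimodule variable $M$, and the Dundas--McCarthy identification of stable $K$-theory with $THH$ as the input for the linear layer. (Minor slip: the fiber of $K(R:I)\to K(R/I^r:I/I^r)$ is $K(R:I^r)$, not $K(R/I^{r+1}:I^r/I^{r+1})$, but the induction on nilpotency degree goes through as intended.)

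The load-bearing step (b), however, is false. The higher layers of the Goodwillie tower of $M\mapsto K(S\ltimes M:M)$ do not vanish after $p$-completion, so neither functor agrees with its linearization. The dual numbers already refute it: $K_{2i-1}(\mathbb{F}_p[\epsilon]/(\epsilon^2):(\epsilon))$ is a finite $p$-group whose order grows with $i$ (by Hesselholt--Madsen it is built from truncated big Witt vectors), whereas the linear term $THH(\mathbb{F}_p;\mathbb{F}_p)$ contributes a single copy of $\mathbb{F}_p$ in each relevant degree; the discrepancy lives exactly in the cross-effects you propose to kill. What McCarthy actually proves is that the trace induces a $p$-adic equivalence on \emph{every} layer $D_nF\to D_nG$ --- the $n$-th layers are homotopy-orbit constructions on $THH$ with $n$ smash copies of $M$ carrying a cyclic $C_n$-action (not a $\Sigma_n$-Tate vanishing argument), and the cyclotomic structure on the $TC$ side is what matches them up $p$-adically --- together with convergence of both towers on sufficiently connected $M$. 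Equivalently, in the Dundas--Goodwillie--McCarthy formulation, one shows the trace is an equivalence on first differentials at \emph{every} base ring, not just at $S$, and invokes the principle that functors with the same derivative everywhere agree on nilpotent extensions; at no point does the tower collapse to its linear part. Your step (a) is also overstated: the identification of $D_1TC(S;M)$ with $THH(S;M)$ via the trace holds after $p$-completion but not integrally; the integral comparison is Dundas' later refinement and needs Goodwillie's rational theorem as an additional input.
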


The topological cyclic homology spectrum $TC$ is the homotopy limit of a pro-spectrum $TC^n$; the following theorem is formulated in terms of the latter pro-spectrum. It implies that 
$K(R,S:I)\to TC(R,S:I)$ becomes a weak equivalence after $p$-completion.

\begin{theorem}\label{thm:ghkabi} (Geisser-Hesselholt \cite{ghkabi}*{Theorem 1})
Let $f:R\to S$ be a homomorphism of unital associative rings, let $I\triqui R$
be a two-sided ideal and assume that $f:I\to f(I)$ is an isomorphism onto
a two-sided ideal of $S$. Then the map induced by the cyclotomic trace map
\[
K_q(R, S:I, \zZ/p^\nu)\to TC^n_q(R,S:I,\zZ/p^\nu)
\]
is an isomorphism of pro-abelian groups, for all integers $q$, all primes $p$,
and all positive integers $\nu$.
\end{theorem}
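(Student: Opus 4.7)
The plan is to reduce the birelative excision statement comparing $K$-theory with $TC^n$ to a pro-excision statement for topological Hochschild homology, and then establish the latter directly, using McCarthy's theorem to bridge the two.

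First I would reduce to pro-$THH$. Each $TC^n$ is constructed from topological Hochschild homology and its fixed-point spectra $THH^{C_{p^i}}$ for $0 \le i < n$ via iterated homotopy fibers involving the restriction, Frobenius, and inclusion maps, and these fixed-point spectra are related to $THH$ itself via the fundamental cofiber sequence of Hesselholt--Madsen. With $\zZ/p^\nu$ coefficients and in the pro-sense over $n$, it therefore suffices to prove that the birelative pro-spectrum $THH(R,S:I,\zZ/p^\nu)$ vanishes.

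Next I would establish this pro-excision for $THH$ directly. Writing $THH$ \`a la B\"okstedt as the realization of a cyclic spectrum whose $q$-simplices are (essentially) $A^{\wedge(q+1)}$, the birelative object $THH(R,S:I)$ is assembled from those simplicial degrees in which at least one smash factor lies in $I$. Because $f\colon I \to f(I)$ is a bijection, the $I$-factors on the $R$ and $S$ sides are identified; the only potential obstruction comes from the interaction of such $I$-factors with tensor powers of the quotients $R/I \cong S/f(I)$. One would then argue that in the pro-system indexed by $n$, the groups measuring this obstruction have bounded degree and are eventually killed by the restriction maps, so that the birelative pro-spectrum is pro-zero.

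Then I would combine pro-excision for $TC^n$ with McCarthy's theorem. Following a Suslin-type double construction, one embeds the data $(I \triqui R, f\colon R \to S)$ into an auxiliary diagram (for instance via the tensor algebra $T(I)$ and its augmentation ideal $JI$) where the failure of excision is rephrased as a diagram of nilpotent extensions. Each individual nilpotent piece is handled by McCarthy's Theorem \ref{thm:mc}, which says the trace is a $p$-adic equivalence on such relative $K$-theory. Chasing the resulting commutative square in the pro-category of abelian groups and invoking the five lemma then yields the claimed pro-isomorphism for all $q$.

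The hard part will be the second step: pro-excision for $THH$. In the rational story of Theorem \ref{thm:kabi}, the analogous step was resolved by invoking the Suslin--Wodzicki characterization of $\zQ$-excisive non-unital rings, but no such clean criterion is available for $\zZ/p^\nu$-coefficients. One must instead argue directly with the cyclic bar construction and exploit the extra flexibility offered by the pro-structure, where the obstructions that would appear as nonzero honest homotopy groups get absorbed upon passage to the pro-limit along the restriction tower.
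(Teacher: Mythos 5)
First, a caveat: the survey you are reading does not prove this theorem; it is quoted verbatim from Geisser--Hesselholt \cite{ghkabi}, so there is no in-paper proof to compare against, and the remarks below measure your sketch against the argument of \emph{loc.~cit.} The central problem with your proposal is that its second step sets out to prove a false statement, and this makes the whole architecture internally inconsistent. You propose to show that the bi-relative $THH$ is pro-zero along the restriction tower, whence (by the fundamental cofibration sequence and induction on fixed points, as in your first step) the bi-relative pro-system $\{TC^n_q(R,S:I,\zZ/p^\nu)\}_n$ would be pro-zero. Combined with the isomorphism you are trying to prove, this would force the \emph{constant} pro-group $K_q(R,S:I,\zZ/p^\nu)$ to vanish, i.e.\ $K$-theory with finite coefficients would satisfy excision. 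That is false: for $A=k[x,y]/(xy)$ over a perfect field $k$ of characteristic $p$, with $B=k[x]\times k[y]$ the normalization and $I$ the conductor, the bi-relative groups $K_q(A,B:I,\zZ/p^\nu)$ are large and nonzero --- indeed the whole point of Theorem \ref{thm:ghkabi} is that these nonzero groups are \emph{computed} by the equally nonzero bi-relative $TC^n$-groups, which is how Geisser and Hesselholt compute the $K$-theory of singular curves. The restriction maps do not kill the obstruction to excision for $THH$ (essentially the bar homology of the non-unital ring $I$); it survives to $TR$ and to $TC$. You may be half-remembering the genuine pro-excision theorems for $THH$ (Geisser--Hesselholt, Dundas--Morrow, Morrow), but there the pro-structure runs over the \emph{powers} $I^m$ of the ideal, not over the $TC^n$-tower, and no such pro-structure appears in the statement at hand.

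The actual proof is a comparison throughout and never attempts to show that either side vanishes. After reducing to the case where $f$ is surjective, Geisser and Hesselholt resolve the situation by degreewise free (tensor-algebra) simplicial rings, for which excision holds levelwise in both bi-relative $K$-theory and bi-relative $TC^n$; the discrepancy between a theory evaluated on the resolution and on the original ring is then controlled, stage by stage, by square-zero extensions, where the relative $p$-adic equivalence $K\simeq TC$ of McCarthy--Dundas (your Theorem \ref{thm:mc}, but in its simplicial-ring form with Dundas's connectivity estimates, not merely the statement for nilpotent ideals of discrete rings) applies; the finite coefficients and the pro-structure in $n$ are exactly what is needed to interchange the resulting homotopy limits. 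Your third step --- tensor algebras, McCarthy, a five-lemma chase --- is the germ of this correct argument, but it cannot be welded to a vanishing statement for bi-relative $THH$, and as written the ``chase'' hides precisely the connectivity and limit-interchange issues that constitute the real content of the proof.
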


\begin{remark}\label{rem:dunk}
In their recent article \cite{dunk}, Dundas and Kittang have shown that Goodwillie's global cyclotomic trace $K\to \TC$ induces an integral isomorphism $K_*(R,S:I)\to \TC_*(R,S:I)$.  
\end{remark}

The following is a version of Theorem \ref{thm:chhg} for perfect fields which admit \emph{strong resolution of singularities}. This means that for
every integral scheme $X$ separated and of finite type over $k$, there
exists a sequence of blow-ups
$$X_r \to X_{r-1} \to \dots \to X_1 \to X_0 = X$$
such that the reduced scheme $X_r^{\red}$ is smooth over
$k$; the center $Y_i$ of the blow-up $X_{i+1} \to X_i$ is connected
and smooth over $k$; the closed embedding of $Y_i$ in $X_i$ is
normally flat; and $Y_i$ is nowhere dense in $X_i$.  

\begin{theorem}\label{thm:ghchh} (Geisser-Hesselholt, \cite{ghneg}*{Theorem 1.1})
Let $k$ be an
infinite perfect field such that strong resolution of singularities
holds over $k$, and let $\{ F^n(-) \}$ be a presheaf of pro-spectra on
the category of schemes essentially of finite type over
$k$. Assume that $\{ F^n(-) \}$ takes infinitesimal thickenings to weak
equivalences and finite abstract blow-up squares to homotopy cartesian
squares. Assume further that each $F^n(-)$ takes elementary Nisnevich
squares and squares associated with blow-ups along regular embeddings
to homotopy cartesian squares. Then the canonical map defines a weak
equivalence of pro-spectra 
$$\{ F^n(X) \} \xrightarrow{\sim}
\{ \mathbb{H}_{\cdh}^{\boldsymbol{\cdot}}(X,F^n(-)) \}$$
for every scheme $X$ essentially of finite type over $k$.
\end{theorem}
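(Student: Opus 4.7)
The plan is to adapt the proof of Haesemeyer's theorem (Theorem \ref{thm:chhg}) to the pro-spectrum setting, with strong resolution of singularities playing the role of Hironaka's desingularization. By Voevodsky's machinery for topologies generated by a $cd$-structure, as applied in \cite{chsw}, it suffices to prove that $\{F^n(-)\}$ has the Mayer-Vietoris property, as a presheaf of pro-spectra, for every abstract blow-up square; together with the assumed Nisnevich property this yields $\cdh$-descent, whence the canonical map $\{F^n(X)\} \weq \{\mathbb{H}_{\cdh}(X,F^n(-))\}$ is a weak equivalence by the standard characterization of descent via local fibrant replacement.

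To verify Mayer-Vietoris for a general abstract blow-up square with $p\colon X'\to X$ proper and birational and $Y\subset X$ the non-isomorphism locus, I would induct on $\dim Y$. Invariance under infinitesimal thickenings lets us replace $X$ and $X'$ by their reductions. Strong resolution of singularities then yields a tower $X_r\to\cdots\to X_0 = X^{\red}$ of blow-ups with smooth, normally flat centers $Y_i\subset X_i$ whose final term $X_r^{\red}$ is smooth. Normal flatness is precisely the condition that makes each embedding $Y_i\subset X_i^{\red}$ regular, so each stage, after reduction, is a regular blow-up, to which the Mayer-Vietoris hypothesis on each $F^n$ applies; the pro-invariance under thickenings bridges the successive reductions. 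Base change dominates $p$ by this tower, and the induced morphism over $X'$ decomposes, up to a finite abstract blow-up supported on a subscheme of strictly smaller dimension than $Y$, into a composite of regular blow-ups. The finite piece is handled by the pro-hypothesis on finite abstract blow-ups and by the inductive hypothesis applied to its singular locus.

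The main obstacle will be the careful mixing of strictly level-wise Mayer-Vietoris hypotheses, assumed for each $F^n$ individually (Nisnevich squares and regular blow-ups), with purely pro-level hypotheses, assumed only of the pro-system $\{F^n\}$ (infinitesimal thickenings and finite blow-ups). Since finite-blow-up and thickening squares are only pro-homotopy cartesian, the pasting of Mayer-Vietoris squares along the resolution tower must be performed in the pro-category, and the error pro-spectra introduced at each such step must be tracked and shown to be compatible with the next regular blow-up. Managing this compatibility, together with the combinatorics of the resolution tower and the induction on $\dim Y$, is the technical heart of the argument; the induction terminates because strong resolution strictly decreases dimension at each step and because when $X$ is smooth one can factor $p$ through a regular blow-up along $Y^{\red}$, a case covered by the level-wise hypothesis.
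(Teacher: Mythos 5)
The paper itself contains no proof of this statement; it is quoted from Geisser--Hesselholt \cite{ghneg}, whose argument does follow the template of \cite{chsw}*{Theorem 3.12} that you propose. Your overall strategy is therefore the right one, but one load-bearing step is simply false. The claim that ``normal flatness is precisely the condition that makes each embedding $Y_i\subset X_i^{\red}$ regular'' does not hold: normal flatness of $X_i$ along a smooth center $Y_i$ does not make $Y_i\hookrightarrow X_i$ a regular immersion when $X_i$ is singular. Already for the cuspidal cubic $X=\Spec k[x,y]/(y^2-x^3)$ the origin is a smooth, normally flat, nowhere dense center (the associated graded is automatically flat over a field), yet it is not regularly embedded (the local ring is one-dimensional and not regular, so the maximal ideal is not generated by a regular sequence), and its blow-up --- the first and most basic step of any resolution --- is not a regular blow-up. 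Consequently the level-wise hypothesis on regular blow-ups does not apply to the stages of the resolution tower, and your reduction collapses. Handling blow-ups along smooth, normally flat centers of reduced but singular schemes is a separate and substantial step in \cite{chsw} and in \cite{ghneg}: one embeds locally into a smooth ambient scheme, compares the strict transform with the total transform of a genuinely regular blow-up there, and absorbs the discrepancy, which is supported over the center, into the induction on dimension. Nothing in your sketch performs this comparison.

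A second gap concerns the opening reduction. The implication ``Mayer--Vietoris for Nisnevich squares and for all abstract blow-up squares implies that the fibrant-replacement map is a weak equivalence'' is available for presheaves of spectra via Voevodsky's criterion, but here the individual $F^n$ do \emph{not} satisfy $\cdh$-descent --- the thickening and finite blow-up hypotheses hold only for the pro-system --- while the target $\{\mathbb{H}_{\cdh}(X,F^n(-))\}$ is formed level-wise. So you cannot invoke ``the standard characterization of descent via local fibrant replacement'' level by level, and there is no fibrant pro-object to which the criterion applies directly. Geisser--Hesselholt must redevelop the descent machinery (the analogue of \cite{chsw}*{Theorem 3.4} and the attendant spectral-sequence comparisons) for strict maps of pro-spectra. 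You correctly identify this as the technical heart, but identifying a difficulty is not the same as resolving it; as written the proposal does not constitute a proof.
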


By Theorems \ref{thm:kh}, \ref{thm:mc}, and \ref{thm:ghkabi}, the theorem above applies to $KH$ and to the fiber of the cyclotomic trace. 
Using this, Geisser and Hesselholt obtained the following result about Weibel's dimension 
conjecture.

\begin{theorem}\label{thm:ghneg}(Geisser-Hesselholt, \cite{ghneg}*{Theorem A})
Let $k$ be an infinite perfect field of
characteristic $p > 0$ such that strong resolution of singularities
holds over $k$, and let $X$ be a $d$-dimensional scheme essentially of
finite type over $k$. Then $K_q(X)$ vanishes for $q < -d$.
\end{theorem}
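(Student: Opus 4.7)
\noindent\emph{Proof proposal.} The strategy is to imitate the characteristic zero argument for Theorem \ref{thm:dim}, substituting strong resolution of singularities for Hironaka and the cyclotomic trace for the Chern character into cyclic homology. I will first establish the $KH$-vanishing $KH_q(X)=0$ for $q<-d$, and then reduce the problem for $K$ to a comparison $K\weq KH$ in that range via the fiber of the cyclotomic trace. By Theorem \ref{thm:kh} the theory $KH$ is nilinvariant and excisive, and it satisfies Nisnevich descent and the Mayer-Vietoris property for regular blow-ups exactly as in characteristic zero; viewed as a constant pro-presheaf, $KH$ therefore meets the hypotheses of Theorem \ref{thm:ghchh}, yielding $KH(X)\weq \zH_{\cdh}(X,KH)$ and the descent spectral sequence
\[
E_2^{p,q}=H_{\cdh}^p(X,a_{\cdh}KH_{-q})\Longrightarrow KH_{-p-q}(X).
\]
Since $KH$ agrees with $K$ on smooth $k$-schemes, where negative $K$-groups vanish, strong resolution of singularities forces $a_{\cdh}KH_{-q}=0$ for $q>0$; combined with the Suslin-Voevodsky bound $H_{\cdh}^p(X,\cS)=0$ for $p>d$, the spectral sequence vanishes in total degree below $-d$.

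\medskip

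The remaining task is to prove $K_q(X)\to KH_q(X)$ is an isomorphism for $q<-d$. Since $K$ and $KH$ share the same $\cdh$-sheafification (they agree on smooth schemes, which $\cdh$-cover everything by strong resolution), the previous step identifies this with the vanishing of the fiber $\cF^K_q(X)$ of $K(X)\to\zH_{\cdh}(X,K)$. To control that fiber I would use the pro-presheaf $\{F^n=\hofi(K\to TC^n)\}$: by Theorems \ref{thm:mc} and \ref{thm:ghkabi}, $\{F^n\}$ is pro-invariant under infinitesimal thickenings and sends ideal-preserving maps to pro-equivalences, and it inherits Nisnevich descent and the regular blow-up Mayer-Vietoris property from $K$ and $TC^n$. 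Theorem \ref{thm:ghchh} therefore gives a pro-equivalence $\{F^n(X)\}\weq\{\zH_{\cdh}(X,F^n)\}$; a second descent spectral sequence, together with the explicit computation of $TC^n$ on smooth $k$-schemes via de Rham-Witt forms (which vanish in sufficiently negative degrees), yields the pro-vanishing of $\{F^n_q(X)\}$ for $q<-d$. Passing to the homotopy limit in $n$ and combining with the first step then yields the desired isomorphism.

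\medskip

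The main obstacle will be reconciling the integral conclusion with the fact that Theorems \ref{thm:mc} and \ref{thm:ghkabi} are only statements after $p$-completion. Over a field of characteristic $p$, however, every cyclic-homological obstruction group is a $k$-module and hence $p$-torsion, so the prime-to-$p$ part of $\cF^K$ vanishes automatically; this should allow the $p$-complete descent for $\{F^n\}$ to be upgraded to an integral statement in the range of interest. A secondary technical point is controlling the $\lim^1$-term when passing from the pro-spectrum $\{TC^n\}$ to its homotopy limit $TC$, which should be handled by uniformly bounded nilpotence of the pro-structure inside the spectral sequence.
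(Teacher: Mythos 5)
Your overall architecture --- $\cdh$-descent for $KH$ giving $KH_q(X)=0$ for $q<-d$ via the descent spectral sequence and the Suslin--Voevodsky bound, followed by a comparison of $K$ with $KH$ through the fiber of the cyclotomic trace and Theorem \ref{thm:ghchh} --- is exactly the route of \cite{ghneg} that this survey points to, and the first half (the $KH$-vanishing, modelled on Theorem \ref{thm:chhdim}) is fine. But two steps in the second half do not go through as written. First, your disposal of the prime-to-$p$ part rests on the claim that ``every cyclic-homological obstruction group is a $k$-module''; this presupposes the identification of $\hofi(K\to KH)$ with cyclic homology, which is precisely the characteristic-zero statement (Theorems \ref{thm:goo}, \ref{thm:kabi} and \ref{thm:fhc}) whose failure in characteristic $p$ is the reason $TC$ enters at all. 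The correct input, and the one \cite{ghneg} uses, is Weibel's theorem that for a $\zZ/p$-algebra the groups $N^jK_q$ carry a module structure over the big Witt vectors and are therefore $p$-primary torsion; this is what makes $K[1/p]\to KH[1/p]$ an equivalence, and it is a different theorem from the mod-$p$ statements of \cite{chumodp} you implicitly lean on.

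Second, the concluding assertion that the descent spectral sequence plus ``vanishing of de Rham--Witt forms in sufficiently negative degrees'' yields the pro-vanishing of $\{F^n_q(X)\}$ for $q<-d$ is off by at least one degree. On a smooth affine $k$-scheme one has $F^n_q\cong TC^n_{q+1}$ for $q\le -2$ and $F^n_{-1}$ a quotient of $TC^n_0$, and $TC^n_0$, $TC^n_{-1}$ are nonzero (they are built from Witt vectors of the coordinate ring); hence the $\cdh$-sheaves $aF^n_{-1}$ and $aF^n_0$ do not vanish, and the spectral sequence with $H^p_{\cdh}=0$ for $p>d$ only kills total degrees below $-d-1$. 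Moreover, since the descent statements of Theorems \ref{thm:mc} and \ref{thm:ghkabi} are mod $p^\nu$, the universal-coefficient sequence needed to pass from pro-vanishing of $F^n_*(X;\zZ/p^\nu)$ to integral vanishing of the $p$-primary torsion of $\hofi(K\to KH)$ costs one more degree. The borderline degrees $-d$ and $-d-1$ are thus exactly the ones your argument does not reach, and controlling them --- via a comparison of $\cdh$- and Zariski-cohomology of the de Rham--Witt sheaves and the edge maps of the spectral sequence --- is the technical heart of \cite{ghneg}. Your closing paragraph correctly flags the $p$-completion and ${\lim}^1$ issues, but flagging them is not the same as resolving them; as it stands the sketch proves a strictly weaker vanishing range than the theorem claims.
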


Geisser and Hesselholt also obtained the following result about Vorst's conjecture. 

\begin{theorem}\label{thm:ghvorst}(Geisser-Hesselholt, \cite{ghvorst}*{Theorem A})
Let $k$ be an infinite perfect field of
characteristic $p > 0$ such that strong resolution of singularities
holds over $k$. Let $R$ be a localization of a $d$-dimensional
commutative $k$-algebra of finite type and suppose that $R$ is
$K_{d+1}$-regular. Then $R$ is a regular ring.
\end{theorem}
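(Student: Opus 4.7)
The plan is to mirror the argument of Theorem \ref{thm:chwvorst} (the characteristic zero Vorst conjecture), replacing the Chern character to negative cyclic homology by the cyclotomic trace to topological cyclic homology and replacing Theorem \ref{thm:chhg} by its pro-spectrum analogue, Theorem \ref{thm:ghchh}.

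First I would form the presheaf of pro-spectra
\[
\{F^n(X)\} \;=\; \{\hofi(K(X)\to TC^n(X))\}
\]
with mod $p^\nu$ coefficients. McCarthy's theorem (Theorem \ref{thm:mc}) makes $\{F^n\}$ invariant under infinitesimal thickenings, and the Geisser-Hesselholt pro-excision theorem (Theorem \ref{thm:ghkabi}) makes it send finite abstract blow-up squares to pro-homotopy-cartesian squares. Thomason's results (Theorem \ref{thm:bt}) and their now-standard $TC^n$-analogues give Nisnevich descent and the Mayer-Vietoris property for regular blow-ups. All four hypotheses of Theorem \ref{thm:ghchh} are thus satisfied, and that theorem yields a pro-equivalence $\{F^n(X)\}\weq \{\zH_{\cdh}(X,F^n)\}$ for $X\in\SchF$. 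Equivalently, the square comparing $K$ and $TC^n$ with their $\cdh$-hyperhomology is pro-homotopy-cartesian, so combined with Cisinski's theorem (cited in the excerpt) that $KH\weq\zH_\cdh(-,K)$, the pro-fiber of $K\to KH$ is identified, modulo $p^\nu$, with the pro-fiber $\{\hofi(TC^n\to \zH_\cdh(-,TC^n))\}$.

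Next I would extract a characteristic-$p$ analogue of Proposition \ref{prop:obsreg}. Because $N$ commutes with homotopy fibers, $K_{d+1}$-regularity of $R$ translates, modulo $p^\nu$ and in the pro sense, into the vanishing of $N^j$ applied to the pro-fiber $\{\hofi(TC^n(R;\zZ/p^\nu)\to \zH_\cdh(R,TC^n;\zZ/p^\nu))\}$ in degrees $\le d$, for all $j\ge 1$. The Hesselholt-Madsen computation of $TC_*(-;\zZ/p^\nu)$ on smooth $k$-schemes in terms of the de Rham-Witt complex $W_\nu\Omega^\bullet$, together with a decomposition of the cdh-hyperhomology analogous to the splitting of $\zH_\cdh(-,HH)$ into $\bigoplus_i H^{*}_\cdh(-,a\Omega^i)$ used in \cite{chw}, then allows one to rewrite these vanishings as the statement that $W_\nu\Omega^i_R \to H^0_\cdh(R,aW_\nu\Omega^i)$ is an isomorphism and the higher cdh-cohomology of $aW_\nu\Omega^i$ vanishes for $i\le d+1$.

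To promote these cohomological vanishings to actual regularity of $R$, I would argue locally: $K$-regularity is preserved under localization, so it suffices to show that a local $k$-algebra $R$ of dimension $e$ essentially of finite type, satisfying the above comparison isomorphisms for $i\le e+1$, is regular. The plan here is the characteristic-$p$ counterpart of the singularity-obstruction argument of \cite{chw}: assuming $R$ singular, use strong resolution of singularities to produce a nonzero section of $W_\nu\Omega^{e+1}$ supported on the singular locus that obstructs the corresponding cdh-comparison. The hard part will be exactly this last step. In characteristic zero the size of $\Omega^1_R$ detects singularity in a clean dimensional way; in characteristic $p$ the analogous control of $W_\nu\Omega^\bullet_R$ along a singular locus is more delicate and consumes the full set of hypotheses of the theorem: strong resolution of singularities is required to build the relevant smooth-to-singular comparison diagrams, perfectness of $k$ keeps the de Rham-Witt machinery and the Hesselholt-Madsen computation well-behaved, and infiniteness of $k$ underwrites the Noether-normalization step at the generic point of the singular locus.
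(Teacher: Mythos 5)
First, a caveat: the survey does not prove this theorem. It is quoted verbatim from Geisser--Hesselholt \cite{ghvorst}*{Theorem A}, and the only proof-related content in the surrounding text is the remark that Theorem \ref{thm:ghchh} applies to $KH$ and to the fiber of the cyclotomic trace. Your outline is consistent with that remark and with the broad architecture of the actual argument in \cite{ghvorst}: replace the Chern character by the cyclotomic trace, replace $\cdh$-descent for $K^{\inf}$ by pro-$\cdh$-descent for $\{\hofi(K\to TC^n)\}$ via Theorems \ref{thm:mc}, \ref{thm:ghkabi} and \ref{thm:ghchh}, and replace K\"ahler differentials by de Rham--Witt forms. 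Two smaller points: the appeal to Cisinski is unnecessary (and against the spirit of this approach), since $KH$ itself satisfies the hypotheses of Theorem \ref{thm:ghchh} by Theorem \ref{thm:kh} together with the $KH$-analogues of Theorem \ref{thm:bt}; and the passage from integral $K_{d+1}$-regularity to the vanishing of the pro-groups $\{N^jK_q(R;\zZ/p^\nu)\}$ for all $q\le d+1$ silently uses both Vorst's result that $K_n$-regularity implies $K_{n-1}$-regularity and the fact that the $NK$-groups of an $\zF_p$-algebra are $p$-groups, so that mod-$p^\nu$ information suffices.

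The genuine gap is the step you yourself flag as ``the hard part.'' In the characteristic-zero argument behind Theorem \ref{thm:chwvorst}, the theorem is ultimately carried by a concrete commutative-algebra fact: at a singular point of codimension $e$ the module $\Omega^1_{R/k}$ requires at least $e+1$ generators, and this produces a nonzero class obstructing the comparison $HH_m(R)\to\zH^{-m}_{\cdh}(\Spec R,HH)$ in the relevant range. The characteristic-$p$ counterpart --- producing a nonvanishing obstruction class in (big) de Rham--Witt terms from the non-regularity of $R$ --- is not a formal analogue of this; it requires new structural results on $W_\nu\Omega^{\bullet}$ and its $\cdh$-sheafification, and that is where essentially all of the content of \cite{ghvorst} lies. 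As written, your proposal reduces the theorem to this unproved assertion rather than establishing it, so it is an accurate road map but not yet a proof.
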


If we restrict our attention to toric varieties, the assumption that the ground field admits strong resolution
of singularities can be dropped. The Bierstone-Milman theorem \cite{BM}*{Thm.\,1.1} provides a chacteristic-free resolution
of singularities for such varieties that is sufficient to prove the following toric version of Haesemeyer's theorem.

\begin{theorem}\label{thm:chhtoric}(\cite{monoidres}*{Theorem 1.1})
Assume $k$ is an infinite field 
and let $\cG$ be a cohomology theory on $\SchF$. If $\cG$ satisfies the Mayer-Vietoris property
for Zariski covers, finite abstract blow-up squares, and blow-ups along
regularly embedded closed subschemes, then $\cG$ satisfies the Mayer-Vietoris
property for all abstract blow-up squares of toric $k$-varieties.
\end{theorem}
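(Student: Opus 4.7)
The plan is to mimic the proof of Theorem \ref{thm:chhg} and its pro-spectrum refinement Theorem \ref{thm:ghchh}, replacing Hironaka's desingularization by the Bierstone-Milman toric resolution \cite{BM}*{Thm.\,1.1}. The latter holds over an arbitrary field and produces, for any toric variety $X$, a sequence of blow-ups along smooth toric (hence regularly embedded) centers terminating in a smooth toric variety. This makes the hypothesis on regular blow-ups directly applicable at every stage, which is precisely what the characteristic-zero arguments use Hironaka for.

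The first step is a Noetherian induction on $d = \dim X$. Consider an abstract blow-up square with base $X$ a toric $k$-variety of dimension $d$, assuming the conclusion for all toric $k$-varieties of dimension strictly less than $d$. The hypothesis on finite abstract blow-up squares (applied to $X_{\red} \to X$, and to the finite maps dominating the decomposition into irreducible components) reduces us to $X$ irreducible and reduced, while the hypothesis on Zariski covers allows us to pass to an affine toric chart.

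The second step is to invoke Bierstone-Milman on $(X,Y)$, where $Y$ is the center of the given abstract blow-up $p\colon X' \to X$. This yields a tower $\pi\colon \tilde X = X_r \to X_{r-1} \to \cdots \to X_0 = X$ of blow-ups along smooth toric subvarieties $Y_i \subset X_i$, with $\tilde X$ smooth and the total transform of $Y$ on $\tilde X$ a strict normal crossings divisor $\tilde Y$ with toric components. Each square in this tower is a regular blow-up of a toric variety, so $\cG$ has the Mayer-Vietoris property on it by hypothesis, and each exceptional divisor $E_i$ is a toric variety of dimension $< d$, to which the inductive hypothesis applies. Pulling back the original square along $\pi$ and applying a 2-of-3 argument step by step up the tower, the Mayer-Vietoris property for the original square reduces to that of the pulled-back square on $(\tilde X, \tilde Y, \tilde X', \tilde Y')$.

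The main obstacle will be this last reduction: even though $\tilde X$ is smooth and $\tilde Y$ is snc, the pulled-back abstract blow-up is still not itself a regular blow-up. It must be assembled as an iterated sequence of regular blow-ups along the smooth components of $\tilde Y$, using the Mayer-Vietoris property for finite blow-ups to handle the reduced structure on successive intersections and the property for Zariski covers to localize. The toric-preserving character of Bierstone-Milman is essential here, since each iterated intersection of components of $\tilde Y$ is itself a toric variety of strictly smaller dimension, so that the inductive hypothesis applies to any abstract blow-up squares encountered during this assembly.
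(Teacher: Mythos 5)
The survey itself gives no proof of this theorem: it records the statement from \cite{monoidres}*{Theorem 1.1} and offers only the hint that the Bierstone--Milman theorem \cite{BM} supplies the characteristic-free resolution replacing Hironaka's. So the only meaningful comparison is against that hint and against the Haesemeyer-style template of Theorems \ref{thm:chhg} and \ref{thm:ghchh}, which is indeed the template your proposal follows. Your reductions to the reduced, irreducible, affine case via finite abstract blow-up squares and Zariski covers are fine, and induction on dimension is the right framework.

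The genuine gap is exactly at the point you flag as ``the main obstacle'' and then wave at rather than close. An abstract blow-up $p\colon X'\to X$ is an arbitrary proper map that is an isomorphism off $Y$; until it has been dominated by something you control, there is nothing to ``assemble as an iterated sequence of regular blow-ups along the components of $\tilde Y$'' --- the corner $X'$ is simply not related to any blow-up of $\tilde X$ at that stage. The missing ingredients are (i) a platification step (Raynaud--Gruson, or a toric/equivariant analogue) producing an honest blow-up of an ideal supported on $Y$ that factors through $X'$, so that a 2-of-3 argument can transfer the Mayer--Vietoris property, and (ii) principalization of that ideal by a tower of blow-ups along smooth regularly embedded centers. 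Point (ii) is strictly stronger than desingularization of $X$, which is all that \cite{BM}*{Thm.\,1.1} as you invoke it provides; it is precisely what the phrase ``strong resolution of singularities'' packages in the hypotheses of Theorem \ref{thm:ghchh}. A second, related problem is that your induction is not self-sustaining as stated: pulling the square back along $\pi$ produces corners such as $X'\times_X X_i$ and $X'\times_X Y_i$ that need not be toric varieties, so the inductive hypothesis (which applies only to abstract blow-up squares of toric varieties) cannot be invoked for them. The actual proof in \cite{monoidres} sidesteps both problems by working in the category of monoid schemes, where the cdh topology, the platification and the resolution are combinatorial and every object in sight stays toric, and \cite{BM} is used to realize the combinatorial resolution as a tower of blow-ups of varieties along regularly embedded centers. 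Your direct approach could be repaired, but only by assuming equivariant embedded principalization for toric pairs as a black box and restricting the entire induction to equivariant squares.
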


Theorems \ref{thm:ghkabi} and \ref{thm:chhtoric} were used to prove the following.

\begin{theorem}\label{thm:gubelp}(\cite{chwwp}*{Theorem 0.2})
Gubeladze's conjecture \ref{conj:gubel} holds if $k$ contains a field of characteristic $p>0$.
\end{theorem}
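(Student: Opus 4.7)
My plan is to model the argument on the characteristic-zero proof of Theorem \ref{thm:gubel0} given in \cite{chwwt}, but replace the Chern character to $HN$ with the cyclotomic trace to $TC$ and work with pro-spectra mod $p^\nu$ throughout. The scheme $X=\Spec k[M]$ is an affine normal toric $k$-variety (the assumptions on $M$ force exactly this), and the dilation $\theta_c$ corresponds geometrically to the self-morphism of $X$ induced by multiplication by $c$ on the monoid. As noted in the remark after Conjecture \ref{conj:gubel}, $k\to k[M]$ becomes a polynomial homotopy equivalence along the dilations, so $KH$, being homotopy invariant, automatically satisfies $\colim_{\theta_{c_n}}KH_*(k[M])=KH_*(k)$. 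Therefore the task reduces to controlling the fiber $\hofi(K\to KH)$ along the dilation colimit.

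The next step is to show that in the pro-$p$-adic sense, $K$-theory meets the hypotheses of the toric Haesemeyer theorem \ref{thm:chhtoric}. Thomason's Theorem \ref{thm:bt} provides Nisnevich descent and the Mayer-Vietoris property for blow-ups along regular embeddings. The failure of $K$-theory to satisfy excision, which the finite-blow-up hypothesis demands, is precisely what Theorem \ref{thm:ghkabi} rescues: the cyclotomic trace induces a pro-isomorphism $K_q(R,S:I,\zZ/p^\nu)\to TC^n_q(R,S:I,\zZ/p^\nu)$, and $TC$ does satisfy excision, so the pro-system $\{K(-,\zZ/p^\nu)\}$ satisfies excision as a pro-presheaf. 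Consequently Theorem \ref{thm:chhtoric} applies and yields the Mayer-Vietoris property for all abstract blow-up squares of toric $k$-varieties, pro-mod $p^\nu$. Combined with the characteristic-free Bierstone-Milman toric resolution of $X$, this expresses $K(X,\zZ/p^\nu)$ pro-equivalently as the $\cdh$-hyperhomology of a finite diagram of smooth toric $k$-schemes arising in the resolution.

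Finally one passes to the colimit along $\theta_{c_n}$. Under iterated dilation, the fan of $X$ is stretched by $c_1c_2\cdots c_n$, so the resolution data refines, and the smooth toric pieces appearing in the hyperresolution are $K$-regular over the regular ring $k$. A combinatorial analysis of the fan subdivision shows that the dilation colimit of the hyperresolution spectral sequence collapses to $K_*(k,\zZ/p^\nu)$. Lifting from pro-mod $p^\nu$ to integral coefficients is achieved via an arithmetic fracture square: the $p$-complete information just obtained, together with the $KH$-input (which handles the prime-to-$p$ direction by homotopy invariance of $KH$ in the dilation colimit), yields the integral statement $K_*(k[M])^{\fc}=K_*(k)$.

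The main obstacle I anticipate is the combinatorial step: one must verify that the Bierstone-Milman hyperresolution of $X$ interacts controllably with iterated dilation, so that in the colimit the contributions of the smooth toric pieces all collapse onto $K_*(k)$. A secondary but nontrivial point is the lift from pro-$p$-adic to integral coefficients, which requires either a careful fracture-square argument combining the mod-$p^\nu$ conclusion with the $KH$ homotopy equivalence, or appeal to the integral trace result of Dundas-Kittang mentioned in Remark \ref{rem:dunk}.
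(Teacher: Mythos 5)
The paper gives no proof of this theorem beyond an attribution: it records that Theorems \ref{thm:ghkabi} and \ref{thm:chhtoric} are the inputs and refers to \cite{chwwp} for the argument. Your sketch correctly identifies those two inputs and the right overall shape (control the fiber of $K\to KH$, establish toric $\cdh$-descent for a suitable pro-theory, pass to the dilation colimit), but it contains one outright error and one genuine gap. The error: you assert that $TC$ satisfies excision and deduce that $\{K(-,\zZ/p^\nu)\}$ is pro-excisive, hence satisfies the hypotheses of Theorem \ref{thm:chhtoric}. Theorem \ref{thm:ghkabi} says only that the bi-relative $K$-groups agree with the bi-relative $TC$-groups as pro-abelian groups mod $p^\nu$; it does not say that either side vanishes, and in characteristic $p$ they typically do not (Geisser--Hesselholt's computations for curves such as $k[x,y]/(xy)$ produce nonzero $p$-primary bi-relative groups). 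The object that actually satisfies the hypotheses of Theorem \ref{thm:chhtoric} is the homotopy fiber of the cyclotomic trace, which is pro-nilinvariant by Theorem \ref{thm:mc} and pro-excisive by Theorem \ref{thm:ghkabi} --- exactly as the paper notes after Theorem \ref{thm:ghchh}. With that correction the conclusion is not ``$K(X,\zZ/p^\nu)$ is the $\cdh$-hypercohomology of a resolution of $X$'' but an analogue of Theorem \ref{thm:fhc}: the fiber of $K\to KH$ is identified, pro-mod $p^\nu$, with $\cF^{TC}$, so the problem becomes a computation of $TC$ of monoid algebras and of its $\cdh$-fibrant replacement along the dilations.

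That computation is the heart of the matter, and your sketch replaces it with an unsubstantiated ``combinatorial analysis of the fan subdivision.'' Note that $\theta_c$ is induced by $a\mapsto a^c$ on $M$, i.e.\ by multiplication by $c$ on the lattice; it does not refine the fan, and the smooth pieces of a Bierstone--Milman resolution do not obviously carry compatible dilation maps, so ``the resolution data refines'' is not a proof. What is needed is an analysis of how the dilations act on the (weight-graded) topological Hochschild and cyclic homology of $k[M]$ and of the resolving schemes, showing the positive-weight contributions die in the colimit; nothing in your argument touches $TC$ itself. Your passage from pro-mod $p^\nu$ to integral coefficients is also incomplete: $KH$ being homotopy invariant says nothing about $K$ with $\zZ[1/p]$-coefficients; one needs that relative and bi-relative $K$-theory of algebras over $\zF_p$ is $p$-primary torsion, so that $K\otimes\zZ[1/p]$ itself satisfies the hypotheses of Theorem \ref{thm:chhtoric}, before a fracture argument can close the loop. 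Finally, a smaller inaccuracy: the hypotheses on $M$ do not force $\Spec k[M]$ to be a \emph{normal} toric variety, nor $M$ to be finitely generated (one reduces to that case by a filtered colimit), and much of the difficulty lies precisely in the non-normal case.
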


As we saw in Theorem \ref{thm:gubel0}, the case when $k\supset \zQ$ of the theorem above had already been proved by Gubeladze in \cite{gubelcoeff}. A different proof of Gubeladze's theorem, using Theorem \ref{thm:chhtoric}, was also given in \cite{chwwp}*{Theorem 7.5}.

\begin{bibdiv}
\begin{biblist}
\bib{bass}{article}{
   author={Bass, Hyman},
   title={Some problems in ``classical'' algebraic $K$-theory},
   conference={
      title={Algebraic $K$-theory, II: ``Classical'' algebraic $K$-theory
      and connections with arithmetic (Proc. Conf., Battelle Memorial Inst.,
      Seattle, Wash., 1972)},
   },
   book={
      publisher={Springer},
      place={Berlin},
   },
   date={1973},
   pages={3--73. Lecture Notes in Math., Vol. 342},
 review={\MR{0409606 (53 \#13358)}},
}
\bib{BM}{article}{
   author={Bierstone, Edward},
   author={Milman, Pierre D.},
   title={Desingularization of toric and binomial varieties},
   journal={J. Algebraic Geom.},
   volume={15},
   date={2006},
   number={3},
   pages={443--486},
   issn={1056-3911},
   review={\MR{2219845 (2007e:14025)}},
   doi={10.1090/S1056-3911-06-00430-9},
}
\bib{bhm}{article}{
   author={B{\"o}kstedt, M.},
   author={Hsiang, W. C.},
   author={Madsen, I.},
   title={The cyclotomic trace and algebraic $K$-theory of spaces},
   journal={Invent. Math.},
   volume={111},
   date={1993},
   number={3},
   pages={465--539},
   issn={0020-9910},
   review={\MR{1202133 (94g:55011)}},
   doi={10.1007/BF01231296},
}

\bib{cis}{article}{
   author={Cisinski, Denis-Charles},
   title={Descente par \'eclatements en $K$-th\'eorie invariante par
   homotopie},
   language={French, with English and French summaries},
   journal={Ann. of Math. (2)},
   volume={177},
   date={2013},
   number={2},
   pages={425--448},
   issn={0003-486X},
   review={\MR{3010804}},
   doi={10.4007/annals.2013.177.2.2},
}
\bib{derived}{article}{
   author={Corti{\~n}as, Guillermo},
   title={On the derived functor analogy in the Cuntz-Quillen framework for
   cyclic homology},
   journal={Algebra Colloq.},
   volume={5},
   date={1998},
   number={3},
   pages={305--328},
   issn={1005-3867},
   review={\MR{1679567 (2000a:19003)}},
}

\bib{kabi}{article}{
   author={Corti{\~n}as, Guillermo},
   title={The obstruction to excision in $K$-theory and in cyclic homology},
   journal={Invent. Math.},
   volume={164},
   date={2006},
   number={1},
   pages={143--173},
   issn={0020-9910},
   review={\MR{2207785 (2006k:19006)}},
   doi={10.1007/s00222-005-0473-9},
}
\bib{friendly}{article}{
   author={Corti{\~n}as, Guillermo},
   title={Algebraic v. topological $K$-theory: a friendly match},
   conference={
      title={Topics in algebraic and topological $K$-theory},
   },
   book={
      series={Lecture Notes in Math.},
      volume={2008},
      publisher={Springer},
      place={Berlin},
   },
   date={2011},
   pages={103--165},
   review={\MR{2762555 (2012c:19001)}},
   doi={\url{10.1007/978-3-642-15708-0_3}},
}
\bib{chsw}{article}{
   author={Corti{\~n}as, G.},
   author={Haesemeyer, C.},
   author={Schlichting, M.},
   author={Weibel, C.},
   title={Cyclic homology, cdh-cohomology and negative $K$-theory},
   journal={Ann. of Math. (2)},
   volume={167},
   date={2008},
   number={2},
   pages={549--573},
   issn={0003-486X},
   review={\MR{2415380 (2009c:19006)}},
   doi={10.4007/annals.2008.167.549},
}
\bib{chwnk}{article}{
   author={Corti{\~n}as, G.},
   author={Haesemeyer, C.},
   author={Walker, Mark E.},
   author={Weibel, C.},
   title={Bass' $NK$ groups and cdh-fibrant Hochschild homology},
   journal={Invent. Math.},
   volume={181},
   date={2010},
   number={2},
   pages={421--448},
   issn={0020-9910},
   review={\MR{2657430 (2011g:19003)}},
   doi={10.1007/s00222-010-0253-z},
}
\bib{chwwq}{article}{
   author={Corti{\~n}as, G.},
   author={Haesemeyer, C.},
   author={Walker, Mark E.},
   author={Weibel, C.},
   title={A negative answer to a question of Bass},
   journal={Proc. Amer. Math. Soc.},
   volume={139},
   date={2011},
   number={4},
   pages={1187--1200},
   issn={0002-9939},
   review={\MR{2748413 (2011m:19001)}},
   doi={10.1090/S0002-9939-2010-10728-1},
}
\bib{chwwt}{article}{
   author={Corti{\~n}as, G.},
   author={Haesemeyer, C.},
   author={Walker, Mark E.},
   author={Weibel, C.},
   title={The $K$-theory of toric varieties},
   journal={Trans. Amer. Math. Soc.},
   volume={361},
   date={2009},
   number={6},
   pages={3325--3341},
   issn={0002-9947},
   review={\MR{2485429 (2010b:19001)}},
   doi={10.1090/S0002-9947-08-04750-8},
}
\bib{monoidres}{article}{
 author={Corti{\~n}as, G.},
   author={Haesemeyer, C.},
   author={Walker, Mark E.},
   author={Weibel, C.},
   title={Toric varieties, monoid schemes and $cdh$ descent},
   journal={J. reine angew. Math.},
   date={2013},
   doi={10.1515/crelle-2012-0123},
}

\bib{chwwp}{article}{
author={Corti{\~n}as, G.},
   author={Haesemeyer, C.},
   author={Walker, Mark E.},
   author={Weibel, C.},
   title={The $K$-theory of toric varieties in positive characteristic},
   journal={J. Topol.},
   date={2014},
   volume={7},
   number={1},
   pages={247--286},
   doi={10.1112/jtopol/jtt026},
}   

\bib{chw}{article}{
   author={Corti{\~n}as, G.},
   author={Haesemeyer, C.},
   author={Weibel, C.},
   title={$K$-regularity, $cdh$-fibrant Hochschild homology, and a
   conjecture of Vorst},
   journal={J. Amer. Math. Soc.},
   volume={21},
   date={2008},
   number={2},
   pages={547--561},
   issn={0894-0347},
   review={\MR{2373359 (2008k:19002)}},
   doi={10.1090/S0894-0347-07-00571-1},
}
\bib{CT}{article}{
   author={Corti{\~n}as, Guillermo},
   author={Thom, Andreas},
   title={Comparison between algebraic and topological $K$-theory of locally
   convex algebras},
   journal={Adv. Math.},
   volume={218},
   date={2008},
   number={1},
   pages={266--307},
   issn={0001-8708},
   review={\MR{2409415 (2009h:46136)}},
   doi={10.1016/j.aim.2007.12.007},
}

\bib{cudocu}{article}{
   author={Cuntz, Joachim},
   title={Bivariante $K$-Theorie f\"ur lokalkonvexe Algebren und der
   Chern-Connes-Charakter},
   language={German, with English summary},
   journal={Doc. Math.},
   volume={2},
   date={1997},
   pages={139--182 (electronic)},
   issn={1431-0635},
   review={\MR{1456322 (98h:19006)}},
}

\bib{cubiva}{article}{
   author={Cuntz, Joachim},
   title={Bivariant $K$-theory and the Weyl algebra},
   journal={$K$-Theory},
   volume={35},
   date={2005},
   number={1-2},
   pages={93--137},
   issn={0920-3036},
   review={\MR{2240217 (2008a:46068)}},
   doi={10.1007/s10977-005-3464-0},
}

\bib{cq}{article}{
   author={Cuntz, Joachim},
   author={Quillen, Daniel},
   title={Excision in bivariant periodic cyclic cohomology},
   journal={Invent. Math.},
   volume={127},
   date={1997},
   number={1},
   pages={67--98},
   issn={0020-9910},
   review={\MR{1423026 (98g:19003)}},
   doi={10.1007/s002220050115},
}

\bib{cut}{article}{
   author={Cuntz, Joachim},
   author={Thom, Andreas},
   title={Algebraic $K$-theory and locally convex algebras},
   journal={Math. Ann.},
   volume={334},
   date={2006},
   number={2},
   pages={339--371},
   issn={0025-5831},
  review={\MR{2207702 (2006j:46070)}},
   doi={10.1007/s00208-005-0722-7},
}
\bib{dunk}{article}{
author={Bj{\o}rn Ian Dundas},
author={Harald {{\O}}yen Kittang},
title={Integral excision for $K$-theory},
journal={Homology Homotopy Appl.},
status={to appear}
}
\bib{ghkabi}{article}{
   author={Geisser, Thomas},
   author={Hesselholt, Lars},
   title={Bi-relative algebraic $K$-theory and topological cyclic homology},
   journal={Invent. Math.},
   volume={166},
   date={2006},
   number={2},
   pages={359--395},
   issn={0020-9910},
   review={\MR{2249803 (2008a:19003)}},
   doi={10.1007/s00222-006-0515-y},
}

\bib{ghneg}{article}{
   author={Geisser, Thomas},
   author={Hesselholt, Lars},
   title={On the vanishing of negative $K$-groups},
   journal={Math. Ann.},
   volume={348},
   date={2010},
   number={3},
   pages={707--736},
   issn={0025-5831},
   review={\MR{2677901 (2011j:19004)}},
   doi={10.1007/s00208-010-0500-z},
}

\bib{ghvorst}{article}{
   author={Geisser, Thomas},
   author={Hesselholt, Lars},
   title={On a conjecture of Vorst},
   journal={Math. Z.},
   volume={270},
   date={2012},
   number={1-2},
   pages={445--452},
   issn={0025-5874},
   review={\MR{2875843 (2012k:19003)}},
   doi={\url{10.1007/s00209-010-0806-2}},
}

\bib{grw}{article}{
   author={Geller, S.},
   author={Reid, L.},
   author={Weibel, C.},
   title={The cyclic homology and $K$-theory of curves},
   journal={J. Reine Angew. Math.},
   volume={393},
   date={1989},
   pages={39--90},
   issn={0075-4102},
   review={\MR{972360 (89m:14006)}},
   doi={10.1090/S0273-0979-1986-15474-1},
}

\bib{gw}{article}{
   author={Geller, S.},
   author={Weibel, C.},
   title={$K(A,B,I)$. II},
   journal={$K$-Theory},
   volume={2},
   date={1989},
   number={6},
   pages={753--760},
   issn={0920-3036 begin_of_the_skype_highlighting FREE 0920-3036 end_of_the_skype_highlighting},
   review={\MR{1010981 (90h:18013)}},
   doi={10.1007/BF00538431},
}
\bib{far}{article}{
   author={Geller, S.},
   author={Weibel, C.},
   title={Hochschild and cyclic homology are far from being homotopy
   functors},
   journal={Proc. Amer. Math. Soc.},
   volume={106},
   date={1989},
   number={1},
   pages={49--57},
   issn={0002-9939},
   review={\MR{965242 (89i:18010)}},
   doi={10.2307/2047373},
}
\bib{gwet}{article}{
   author={Geller, Susan C.},
   author={Weibel, Charles A.},
   title={\'Etale descent for Hochschild and cyclic homology},
   journal={Comment. Math. Helv.},
   volume={66},
   date={1991},
   number={3},
   pages={368--388},
   issn={0010-2571},
   review={\MR{1120653 (92e:19006)}},
   doi={10.1007/BF02566656},
}
\bib{goo1}{article}{
   author={Goodwillie, Thomas G.},
   title={Cyclic homology, derivations, and the free loopspace},
   journal={Topology},
   volume={24},
   date={1985},
   number={2},
   pages={187--215},
   issn={0040-9383},
   review={\MR{793184 (87c:18009)}},
   doi={10.1016/0040-9383(85)90055-2},
}

\bib{goo2}{article}{
   author={Goodwillie, Thomas G.},
   title={Relative algebraic $K$-theory and cyclic homology},
   journal={Ann. of Math. (2)},
   volume={124},
   date={1986},
   number={2},
   pages={347--402},
   issn={0003-486X},
   review={\MR{855300 (88b:18008)}},
   doi={10.2307/1971283},
}

\bib{gubelc}{article}{
   author={Gubeladze, Joseph},
   title={$K$-theory of affine toric varieties},
   journal={Homology Homotopy Appl.},
   volume={1},
   date={1999},
   pages={135--145 (electronic)},
   issn={1512-0139},
   review={\MR{1693095 (2000e:14009)}},
}

\bib{gubelinv}{article}{
   author={Gubeladze, Joseph},
   title={The nilpotence conjecture in $K$-theory of toric varieties},
   journal={Invent. Math.},
   volume={160},
   date={2005},
   number={1},
   pages={173--216},
   issn={0020-9910},
   review={\MR{2129712 (2006d:14057)}},
   doi={10.1007/s00222-004-0410-3},
}

\bib{gubelcoeff}{article}{
   author={Gubeladze, Joseph},
   title={Global coefficient ring in the nilpotence conjecture},
   journal={Proc. Amer. Math. Soc.},
   volume={136},
   date={2008},
   number={2},
   pages={499--503 (electronic)},
   issn={0002-9939},
   review={\MR{2358489 (2008j:19009)}},
   doi={10.1090/S0002-9939-07-09106-X},
}
\bib{gubelb}{article}{
   author={Gubeladze, Joseph},
   title={On Bass' question for finitely generated algebras over large
   fields},
   journal={Bull. Lond. Math. Soc.},
   volume={41},
   date={2009},
   number={1},
   pages={36--40},
   issn={0024-6093},
   review={\MR{2481986 (2009m:19001)}},
   doi={10.1112/blms/bdn101},
}
\bib{chh}{article}{
   author={Haesemeyer, Christian},
   title={Descent properties of homotopy $K$-theory},
   journal={Duke Math. J.},
   volume={125},
   date={2004},
   number={3},
   pages={589--620},
   issn={0012-7094},
   review={\MR{2166754 (2006g:19002)}},
   doi={10.1215/S0012-7094-04-12534-5},
}
\bib{hiro}{article}{
   author={Hironaka, Heisuke},
   title={Resolution of singularities of an algebraic variety over a field
   of characteristic zero. I, II},
   journal={Ann. of Math. (2) 79 (1964), 109--203; ibid. (2)},
   volume={79},
   date={1964},
   pages={205--326},
   issn={0003-486X},
   review={\MR{0199184 (33 \#7333)}},
}

\bib{jardine}{book}{
   author={Jardine, J. F.},
   title={Generalized \'etale cohomology theories},
   series={Progress in Mathematics},
   volume={146},
   publisher={Birkh\"auser Verlag},
   place={Basel},
   date={1997},
   pages={x+317},
   isbn={3-7643-5494-1},
   review={\MR{1437604 (98c:55013)}},
   doi={10.1007/978-3-0348-0066-2},
}

\bib{karcomp}{article}{
   author={Karoubi, Max},
   title={$K$-th\'eorie alg\'ebrique de certaines alg\`ebres d'op\'erateurs},
   language={French},
   conference={
      title={Alg\`ebres d'op\'erateurs},
      address={S\'em., Les Plans-sur-Bex},
      date={1978},
   },
   book={
      series={Lecture Notes in Math.},
      volume={725},
      publisher={Springer},
      place={Berlin},
   },
   date={1979},
   pages={254--290},
   review={\MR{548119 (81i:46095)}},
}
\bib{kas}{article}{
   author={Kassel, Christian},
   title={Cyclic homology, comodules, and mixed complexes},
   journal={J. Algebra},
   volume={107},
   date={1987},
   number={1},
   pages={195--216},
   issn={0021-8693},
   review={\MR{883882 (88k:18019)}},
   doi={10.1016/0021-8693(87)90086-X},
}

\bib{kesch}{article}{
   author={Keller, Bernhard},
   title={On the cyclic homology of ringed spaces and schemes},
   journal={Doc. Math.},
   volume={3},
   date={1998},
   pages={231--259 (electronic)},
   issn={1431-0635},
   review={\MR{1647519 (99i:16018)}},
}

\bib{keloco}{article}{
   author={Keller, Bernhard},
   title={On the cyclic homology of exact categories},
   journal={J. Pure Appl. Algebra},
   volume={136},
   date={1999},
   number={1},
   pages={1--56},
   issn={0022-4049},
   review={\MR{1667558 (99m:18012)}},
   doi={10.1016/S0022-4049(97)00152-7},
}

\bib{macnil}{article}{
   author={McCarthy, Randy},
   title={Relative algebraic $K$-theory and topological cyclic homology},
   journal={Acta Math.},
   volume={179},
   date={1997},
   number={2},
   pages={197--222},
   issn={0001-5962},
   review={\MR{1607555 (99e:19006)}},
   doi={10.1007/BF02392743},
}

\bib{comparos}{article}{
   author={Rosenberg, Jonathan},
   title={Comparison between algebraic and topological $K$-theory for Banach
   algebras and $C^ *$-algebras},
   conference={
      title={Handbook of $K$-theory. Vol. 1, 2},
   },
   book={
      publisher={Springer},
      place={Berlin},
   },
   date={2005},
   pages={843--874},
   review={\MR{2181834 (2006f:46071)}},
   doi={\url{10.1007/978-3-540-27855-9_16}},
}
\bib{svbk}{article}{
   author={Suslin, Andrei},
   author={Voevodsky, Vladimir},
   title={Bloch-Kato conjecture and motivic cohomology with finite
   coefficients},
   conference={
      title={The arithmetic and geometry of algebraic cycles},
      address={Banff, AB},
      date={1998},
   },
   book={
      series={NATO Sci. Ser. C Math. Phys. Sci.},
      volume={548},
      publisher={Kluwer Acad. Publ.},
      place={Dordrecht},
   },
   date={2000},
   pages={117--189},
   review={\MR{1744945 (2001g:14031)}},
}
\bib{sw}{article}{
AUTHOR = {Suslin, Andrei A.},
author={Wodzicki, Mariusz},
     TITLE = {Excision in algebraic {$K$}-theory},
   JOURNAL = {Ann. of Math. (2)},
    VOLUME = {136},
      YEAR = {1992},
    NUMBER = {1},
     PAGES = {51--122},}
  
\bib{tt}{article}{
   author={Thomason, R. W.},
   author={Trobaugh, Thomas},
   title={Higher algebraic $K$-theory of schemes and of derived categories},
   conference={
      title={The Grothendieck Festschrift, Vol.\ III},
   },
   book={
      series={Progr. Math.},
      volume={88},
      publisher={Birkh\"auser Boston},
      place={Boston, MA},
   },
   date={1990},
   pages={247--435},
   review={\MR{1106918 (92f:19001)}},
   doi={\url{10.1007/978-0-8176-4576-2_10}},
}

\bib{teclate}{article}{
   author={Thomason, R. W.},
   title={Les $K$-groupes d'un sch\'ema \'eclat\'e et une formule
   d'intersection exc\'edentaire},
   language={French},
   journal={Invent. Math.},
   volume={112},
   date={1993},
   number={1},
   pages={195--215},
   issn={0020-9910},
   review={\MR{1207482 (93k:19005)}},
   doi={10.1007/BF01232430},
}
\bib{htscd}{article}{
   author={Voevodsky, Vladimir},
   title={Homotopy theory of simplicial sheaves in completely decomposable
   topologies},
   journal={J. Pure Appl. Algebra},
   volume={214},
   date={2010},
   number={8},
   pages={1384--1398},
   issn={0022-4049},
   review={\MR{2593670 (2011a:55022)}},
   doi={\url{10.1016/j.jpaa.2009.11.004}},
}
\bib{vorst}{article}{
   author={Vorst, Ton},
   title={Localization of the $K$-theory of polynomial extensions},
   note={With an appendix by Wilberd van der Kallen},
   journal={Math. Ann.},
   volume={244},
   date={1979},
   number={1},
   pages={33--53},
   issn={0025-5831},
   review={\MR{550060 (80k:18016)}},
   doi={10.1007/BF01420335},
}

\bib{chumodp}{article}{
   author={Weibel, C. A.},
   title={Mayer-Vietoris sequences and mod $p$ $K$-theory},
   conference={
      title={Algebraic $K$-theory, Part I},
      address={Oberwolfach},
      date={1980},
   },
   book={
      series={Lecture Notes in Math.},
      volume={966},
      publisher={Springer},
      place={Berlin},
   },
   date={1982},
   pages={390--407},
   review={\MR{689385 (84f:18026)}},
}

\bib{kh}{article}{
   author={Weibel, Charles A.},
   title={Homotopy algebraic $K$-theory},
   conference={
      title={Algebraic $K$-theory and algebraic number theory (Honolulu, HI,
      1987)},
   },
   book={
      series={Contemp. Math.},
      volume={83},
      publisher={Amer. Math. Soc.},
      place={Providence, RI},
   },
   date={1989},
   pages={461--488},
   review={\MR{991991 (90d:18006)}},
   doi={10.1090/conm/083/991991},
}
\bib{wana}{article}{
   author={Weibel, Charles A.},
   title={$K$-theory and analytic isomorphisms},
   journal={Invent. Math.},
   volume={61},
   date={1980},
   number={2},
   pages={177--197},
   issn={0020-9910},
   review={\MR{590161 (83b:13011)}},
   doi={10.1007/BF01390120},
}

\bib{whcsch}{article}{
   author={Weibel, Charles},
   title={Cyclic homology for schemes},
   journal={Proc. Amer. Math. Soc.},
   volume={124},
   date={1996},
   number={6},
   pages={1655--1662},
   issn={0002-9939},
   review={\MR{1277141 (96h:19003)}},
   doi={10.1090/S0002-9939-96-02913-9},
}
\bib{chubu}{book}{
   author={Weibel, Charles A.},
   title={An introduction to homological algebra},
   series={Cambridge Studies in Advanced Mathematics},
   volume={38},
   publisher={Cambridge University Press},
   place={Cambridge},
   date={1994},
   pages={xiv+450},
   isbn={0-521-43500-5},
   isbn={0-521-55987-1},
   review={\MR{1269324 (95f:18001)}},
}

\bib{wodk}{article}{
   author={Wodzicki, Mariusz},
   title={Algebraic $K$-theory and functional analysis},
   conference={
      title={First European Congress of Mathematics, Vol.\ II},
      address={Paris},
      date={1992},
   },
   book={
      series={Progr. Math.},
      volume={120},
      publisher={Birkh\"auser},
      place={Basel},
  },
   date={1994},
   pages={485--496},
   review={\MR{1341858 (97f:46112)}},
}

\end{biblist}
\end{bibdiv}
\end{document}